\def\<{\langle}
\def\>{\rangle}
\newtheorem{thm}{Theorem}[section]
\newtheorem{lem}[thm]{Lemma}
\newtheorem{pro}[thm]{Proposition}
\newtheorem{ex}[thm]{Example}
\theoremstyle{definition}
\newtheorem{defi}{Definition}[section]
\theoremstyle{remark}
\newtheorem{rmk}{Remark}[section]
\begin{document}
\title{\bf  Twisted $\mathcal{O}$-operators on $3$-Lie
algebras and $3$-NS-Lie algebras}
\author{\bf T. Chtioui, A. Hajjaji, S. Mabrouk, A. Makhlouf}
\author{{ Taoufik Chtioui$^{1}$
 \footnote {  E-mail: chtioui.taoufik@yahoo.fr}
,\  Atef Hajjaji$^{1}$
    \footnote {  E-mail:  atefhajjaji100@gmail.com}
,\  Sami Mabrouk$^{2}$
 \footnote {   E-mail: mabrouksami00@yahoo.fr}
\ and Abdenacer Makhlouf$^{3}$
 \footnote { E-mail: Abdenacer.Makhlouf@uha.fr}
}\\
{\small 1.  University of Sfax, Faculty of Sciences Sfax,  BP
1171, 3038 Sfax, Tunisia} \\
{\small 2.  University of Gafsa, Faculty of Sciences Gafsa, 2112 Gafsa, Tunisia}\\
{\small 3.~ IRIMAS - Département de Mathématiques, 6, rue des frères Lumière,
F-68093 Mulhouse, France}}
\date{}
\maketitle
\begin{abstract}
The purpose of this paper is to  introduce twisted $\mathcal{O}$-operators on  $3$-Lie algebras. We define a cohomology
of a twisted $\mathcal{O}$-operator $T$ as the Chevalley-Eilenberg cohomology of a certain $3$-Lie algebra induced by $T$ with coefficients in
a suitable representation. Then we consider infinitesimal  and formal deformations of twisted
$\mathcal{O}$-operators from cohomological points of view. Furthermore,  we
introduce and study $3$-NS-Lie- algebras as the underlying structure of twisted
$\mathcal{O}$-operators on $3$-Lie algebras. Finally, we investigate    twisted $\mathcal{O}$-operators on $3$-Lie algebras induced by Lie algebras.  
\end{abstract}

\textbf{Key words}:  Twisted $\mathcal{O}$-operator, cohomology, deformation,
NS-Lie algebra, 
$3$-NS-Lie algebra.

\textbf{Mathematics Subject Classification} (2020): 17A40,17B60 17B56, 17B38.

\numberwithin{equation}{section}

\tableofcontents

\section{Introduction} 

 A natural generalization of binary operations appeared first when Cayley studied  cubic matrices which are ternary operations.  Furthermore one may consider in general $n$-ary operations of associative type or Lie type.  In particular,
$3$-Lie algebras
and more
generally, $n$-Lie algebras  \cite{filippov}  are  generalizations of Lie algebras to ternary and $n$-ary cases. Recall that a
$3$-Lie algebra 
 $\mathfrak{g}$ is a vector space together with a skew-symmetric 3-linear
map $[\cdot,\cdot,\cdot]_\mathfrak{g}:\wedge^{3}\mathfrak{g}\rightarrow \mathfrak{g},$ such that for $x_i\in \mathfrak{g}, 1\leq i\leq 5$, the following Filippov-Jacobi Identity (sometimes called fundamental identity or Nambu identity)
holds
\begin{align}\label{eq:de1}
[x_1,x_2,[x_3,x_4,x_5]_\mathfrak{g}]_\mathfrak{g}=[[x_1,x_2,x_3]_\mathfrak{g},x_4,x_5]_\mathfrak{g}+[x_3,[x_1,x_2,x_4]_\mathfrak{g},x_5]_\mathfrak{g}+[x_3,x_4,[x_1,x_2,x_5]_\mathfrak{g}]_\mathfrak{g}.
\end{align}

The first instances of ternary Lie algebras are related to Nambu Mechanics  \cite{nambu}, which was formulated  algebraically by Takhtajan \cite{Takhtajan}. The first complete algebraic study of $n$-Lie algebras is due to Filippov \cite{filippov}.  We refer  to \cite{AMS, BBW, BSZ}  for the realizations
and classifications of $3$-Lie algebras and $n$-Lie algebras. Ternary operations turn to be useful in many Mathematics and Physics domains, like string theory. The quantization of the  Nambu brackets in \cite{Awata} was a motivation to present a general construction  of $(n+1)$-Lie algebras induced by $n$-Lie algebras using  the $n$-ary  brackets  and trace-like linear forms, see \cite{AMS0,AMS,Kitouni}. The structure of $3$-Lie (super)algebras induced by Lie (super)algebras, classification of $3$-Lie algebras and application to constructions of B.R.S. algebras have been considered in \cite{Abramov2018:WeilAlg3LiealgBRSalg,AbramovLatt2016:classifLowdim3Liesuperalg,Abramov2017:Super3LiealgebrasinducedsuperLiealg}.

 A deformation theory based on one-parameter formal power series Deformations  was  introduced first  by Gerstenhaber  \cite{Gerst}
for associative algebras and then extends to Lie algebras by   Nijenhuis and Richardson in \cite{NR}. It is shown that deformations are controlled by suitable cohomologies, Hochschild cohomology in associative case and Chevalley-Eilenberg cohomology in Lie case.  The same approach was used for various algebraic structures. Deformations of  3-Lie algebras were studied in  \cite{Fig}. Cohomology is a main tool for deformation theory and provides also invariants to study  algebraic structures. 

In\cite{BGS}, the authors studied  the solutions of 3-Lie classical Yang-Baxter equation, that lead to introduce  the notion of $\mathcal{O}$-operator on 3-Lie algebras with respect to a representation. In particular,  Rota-Baxter operators on  3-Lie algebras, introduced in \cite{BGLW},   are  $\mathcal{O}$-operators on a 3-Lie algebra with respect to the adjoint representation. 

Twisted Rota-Baxter operators introduced by Uchino  in the context of associative algebras  \cite{Uchino}  are algebraic analogue of  twisted Poisson structure  introduced and studied in  \cite{SW,KS}. They are also related to NS-algebras considered by  Leroux  \cite{Leroux}.
Twisted Rota-Baxter operators on Lie algebras and Leibniz algebras were studied in \cite{Das1,Das3}. A cohomology of twisted Rota-Baxter operator was derived, in \cite{Das1,Das2}, from a suitable $L_\infty$-algebra whose Maurer-Cartan elements are given
by twisted Rota-Baxter operators.  Such a cohomology can be seen as the Hochschild (resp. Chevalley-Eilenberg) cohomology of a certain Lie algebra with coefficients in a suitable representation.  A cohomology
of a twisted relative Rota-Baxter operator  as the Loday-Pirashvili
cohomology of a certain Leibniz algebra was constructed in  \cite{Das3}. One may see also  in \cite {CaiSheng,LazarevSheng} for  Hom version of Nijenhuis Bracket and  cohomologies of relative Rota-Baxter Lie algebras. 

The main purpose of this paper is to study twisted $\mathcal{O}$-operators on 3-Lie algebras. We provide some characterizations and key constructions. We define a cohomology  of  twisted $\mathcal{O}$-operators that controls their deformations.  Moreover, we introduce a new  ternary  algebraic structure called 3-NS-Lie algebras that are related to
twisted $\mathcal{O}$-operators in the same way as 3-pre-Lie algebras are related to $\mathcal{O}$-operators. Finally we provide a relationship between the theories for binary structures and  ternary structures through the approach using trace-like maps.

The paper is organized as follows. In Section 2, we briefly recall basics about representations and cohomology of 3-Lie algebras. Then
 we introduce $\Theta$-twisted $\mathcal{O}$-operators on 3-Lie algebras,  provide some examples and characterization results.
Section 3 is devoted to define the
cohomology of a  twisted $\mathcal{O}$-operator on a 3-Lie algebra using the underlying
3-Lie algebra of a twisted $\mathcal{O}$-operator.
In Section 4, we study deformations
of twisted $\mathcal{O}$-operators and show that they are controlled by  the cohomology theory established in
Section 3. In Section 5   we introduce 3-NS-Lie algebras which are derived naturally from   twisted $\mathcal{O}$-operators. In the last section,  we investigate    twisted $\mathcal{O}$-operators on $3$-Lie algebras induced by Lie algebras along trace-like maps and construct  $3$-NS-Lie algebras from NS-Lie algebras.\\ 

In this paper, we work over an algebraically closed field $\mathbb{K}$ of characteristic $0$ and all the vector spaces are over $\mathbb{K}$.

\section{Twisted $\mathcal{O}$-operators on $3$-Lie algebras}
In this section, we recall some basic definitions about representation, $\mathcal O$-operators  and  cohomology of $3$-Lie algebras, see \cite{Loday,filippov, Kasymov,Takhtajan,Zhang}), and 
 introduce twisted $\mathcal{O}$-operators on $3$-Lie algebras.  Moreover,
we give some constructions of twisted $\mathcal{O}$-operators and provide  examples.

 Let $(\mathfrak{g},[\cdot,\cdot,\cdot]_\mathfrak{g})$ be a $3$-Lie algebra, $V$ be a vector space and $\rho : \wedge^{2}\mathfrak{g}\rightarrow gl(V)$ be a linear map. The pair $(V,\rho)$ is called a representation (or $V$ is a $\mathfrak g$-module) of $\mathfrak{g}$  if $\rho$ satisfies for all $x_1, x_2, x_3, x_4\in \mathfrak{g}$,
\begin{align}\label{eq:rep}
&\rho(x_1,x_2)\rho(x_3,x_4)=\rho([x_1,x_2,x_3]_\mathfrak{g},x_4)+\rho(x_3,[x_1,x_2,x_4]_\mathfrak{g})+\rho(x_3,x_4)\rho(x_1,x_2),\\
&\rho([x_1,x_2,x_3]_\mathfrak{g},x_4)=\rho(x_1,x_2)\rho(x_3,x_4)+\rho(x_2,x_3)\rho(x_1,x_4)+\rho(x_3,x_1)\rho(x_2,x_4).
\end{align}
Set
$\mathfrak{C}_{3Lie}^{n}(\mathfrak{g};V)=Hom(\underbrace{\wedge^{2}\mathfrak{g}\otimes \cdots \otimes \wedge^{2}\mathfrak{g}}_{n-1}\wedge \mathfrak{g},V),\;(n\geq 1)$
 the space of $n$-cochains.

 Consider  the differential  $\partial: \mathfrak{C}_{3Lie}^{n}(\mathfrak{g};V)\rightarrow \mathfrak{C}_{3Lie}^{n+1}(\mathfrak{g};V)$ defined by
\begin{align}
&(\partial f)(x_1,x_2,\cdots,x_{2n+1})\nonumber\\
&=(-1)^{n+1}\rho(x_{2n+1},x_{2n-1})f(x_1,x_2,\cdots,x_{2n-2},x_{2n})\nonumber\\
&+(-1)^{n+1}\rho(x_{2n},x_{2n+1})f(x_1,x_2,\cdots,x_{2n-1})\nonumber\\
&+\sum_{k=1}^{n}(-1)^{k+1}\rho(x_{2k-1},x_{2k})f(x_1,x_2,\cdots,\widehat{x_{2k-1}},\widehat{x_{2k}},\cdots,x_{2n+1})\nonumber\\
&+\sum_{k=1}^{n}\sum_{j=2k+1}^{2n+1}(-1)^{k}f(x_1,x_2,\cdots,\widehat{x_{2k-1}},\widehat{x_{2k}},\cdots,[x_{2k-1},x_{2k},x_j]_\mathfrak{g},\cdots,x_{2n+1})
\end{align}
for all $x_1,x_2,...,x_{2n+1}\in \mathfrak{g}$.
 Thus $(\oplus_{n=1}^{+\infty}\mathfrak{C}_{3Lie}^{n}(\mathfrak{g};V),\partial)$ is a cochain complex which is called Chevalley-Eilenberg cochain complex of 3-Lie algebras.

The quotient space $H_{3Lie}^{n}(\mathfrak{g};V)=Z_{3Lie}^{n}(\mathfrak{g};V)\diagup B_{3Lie}^{n}(\mathfrak{g};V)$, where $Z_{3Lie}^{n}(\mathfrak{g};V)=\{f\in \mathfrak{C}_{3Lie}^{n}(\mathfrak{g};V)|\;\partial f = 0\}$ is the space of $n$-cocycles and $B_{3Lie}^{n}(\mathfrak{g};V)=\{f =
\partial g|\;g\in \mathfrak{C}_{3Lie}^{n-1}(\mathfrak{g};V)\}$ is the space of $n$-coboundaries, is called the cohomology group of the $3$-Lie algebra $\mathfrak{g}$ with coefficients in $V$.

 Let
$\Theta\in \mathfrak{C}_{3Lie}^{2}(\mathfrak{g};V)$ be a $2$-cocycle in the Chevalley-Eilenberg cochain complex. That is  $\Theta : \wedge^{3}\mathfrak{g} \rightarrow V$
is a trilinear map satisfying
\begin{align}\label{eq:cocycle}
&\Theta(x_1,x_2,[y_1,y_2,y_3]_\mathfrak{g})+\rho(x_1,x_2)\Theta(y_1,y_2,y_3)
-\Theta([x_1,x_2,y_1]_\mathfrak{g},y_2,y_3)\nonumber\\
&-\Theta(y_1,[x_1,x_2,y_2]_\mathfrak{g},y_3)-\Theta(y_1,y_2,[x_1,x_2,y_3]_\mathfrak{g})-
\rho(y_2,y_3)\Theta(x_1,x_2,y_1)\nonumber\\
&-\rho(y_3,y_1)\Theta(x_1,x_2,y_2)-\rho(y_1,y_2)\Theta(x_1,x_2,y_3)=0
\end{align}
for $x_1,x_2,y_1,y_2,y_3\in \mathfrak{g}$.
Under the above notations,  the direct sum $\mathfrak{g}\oplus V$ carries a $3$-Lie algebra structure given by
\begin{equation}
[(x,u),(y,v),(z,w)]_{\Theta}=\Big([x,y,z]_\mathfrak{g},\rho(x,y)w+\rho(z,x)v+\rho(y,z)u
+\Theta(x,y,z)\Big),
\end{equation}
which is called the $\Theta$-twisted semi-direct product, denoted by $\mathfrak{g}\ltimes_{\Theta}V$.
\begin{defi}
A linear map $T: V\rightarrow \mathfrak{g}$ is said to be a $\Theta$-twisted $\mathcal{O}$-operator if $T$ satisfies
\begin{equation}\label{eq:twisted}
[Tu,Tv,Tw]_\mathfrak{g}=T\Big(\rho(Tu,Tv)w+\rho(Tv,Tw)u+\rho(Tw,Tu)v+\Theta(Tu,Tv,Tw)\Big)
\end{equation}
for all $u,v,w\in V$.
\end{defi}
Using the twisted
semi-direct product, one can characterize twisted $\mathcal{O}$-operators by their
graphs.
\begin{pro}\label{graph}
A linear map $T:V\rightarrow \mathfrak{g}$ is a $\Theta$-twisted $\mathcal{O}$-operator if and only if the graph $Gr(T)=\{(Tu,u)|\;u\in V\}$ is a subalgebra of the
$\Theta$-twisted semi-direct product $\mathfrak{g}\ltimes_{\Theta}V$.
\end{pro}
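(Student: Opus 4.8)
The plan is to exploit the fact that, since $T$ is linear, the graph $Gr(T)=\{(Tu,u)\mid u\in V\}$ is automatically a linear subspace of $\mathfrak{g}\oplus V$. Consequently $Gr(T)$ is a subalgebra of $\mathfrak{g}\ltimes_{\Theta}V$ if and only if it is closed under the twisted bracket, the $3$-Lie algebra axioms being inherited. Because every element of $Gr(T)$ has the form $(Tu,u)$ for a unique $u\in V$, closure amounts to requiring that the bracket of three generic elements $(Tu,u),(Tv,v),(Tw,w)$ again lie in $Gr(T)$, so the whole statement reduces to a single direct computation.

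First I would apply the defining formula of the $\Theta$-twisted semi-direct product to these three elements. Taking $x=Tu$, $y=Tv$, $z=Tw$ with $V$-components $u,v,w$, this yields
\begin{align*}
[(Tu,u),(Tv,v),(Tw,w)]_{\Theta}
=\Big(&[Tu,Tv,Tw]_\mathfrak{g},\\
&\rho(Tu,Tv)w+\rho(Tw,Tu)v+\rho(Tv,Tw)u+\Theta(Tu,Tv,Tw)\Big).
\end{align*}
Next I would observe that a pair $(a,b)\in\mathfrak{g}\oplus V$ belongs to $Gr(T)$ precisely when its first entry equals $T$ applied to its second entry, i.e. $a=Tb$. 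Applying this criterion to the pair just computed, the bracket lies in $Gr(T)$ if and only if
\[
[Tu,Tv,Tw]_\mathfrak{g}=T\Big(\rho(Tu,Tv)w+\rho(Tv,Tw)u+\rho(Tw,Tu)v+\Theta(Tu,Tv,Tw)\Big),
\]
where I have merely reordered the three $\rho$-terms in the $V$-component to match \eqref{eq:twisted}. This is exactly the $\Theta$-twisted $\mathcal{O}$-operator condition. Since the computation is valid for arbitrary $u,v,w\in V$, the operator equation holds for all $u,v,w$ if and only if $Gr(T)$ is closed under the bracket, which establishes both implications simultaneously.

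There is essentially no serious obstacle here: the $3$-Lie algebra structure on $\mathfrak{g}\ltimes_{\Theta}V$, in particular the Filippov--Jacobi identity, has already been recorded before the statement, so only closure needs to be verified. The sole point requiring a little care is the bookkeeping of the three $\rho$-terms in the $V$-component, whose cyclic pattern in the semi-direct product bracket must be matched term by term against the argument of $T$ in \eqref{eq:twisted}.
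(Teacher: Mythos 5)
Your argument is correct and follows essentially the same route as the paper: compute the twisted semi-direct product bracket of three graph elements and observe that membership in $Gr(T)$ of the result is, term by term, exactly the identity \eqref{eq:twisted}. The only (harmless) difference is that you make explicit the reduction to closure under the bracket and the membership criterion $a=Tb$, which the paper leaves implicit.
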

\begin{proof}
Let $(Tu,u)$, $(Tv,v)$ and $(Tw,w)$ $\in Gr(T)$. Then we have
\begin{align*}
&\quad \;[(Tu,u),(Tv,v),(Tw,w)]_{\Theta}\\
&=\Big([Tu,Tv,Tw]_\mathfrak{g},\rho(Tu,Tv)w+\rho(Tw,Tu)v+\rho(Tv,Tw)u+\Theta(Tu,Tv,Tw)\Big).
\end{align*}
 Assume that  $Gr(T)$ is a subalgebra of the
$\Theta$-twisted semi-direct product $\mathfrak{g}\ltimes_{\Theta}V$, then  we have
\begin{align*}
&[Tu,Tv,Tw]_\mathfrak{g}=
T\Big(\rho(Tu,Tv)w+\rho(Tw,Tu)v+\rho(Tv,Tw)u+\Theta(Tu,Tv,Tw)\Big).
\end{align*}
On the other hand, if $T$ be a $\Theta$-twisted $\mathcal{O}$-operator. Then we obtain
\begin{align*}
&\quad \;[(Tu,u),(Tv,v),(Tw,w)]_{\Theta}\\
&=\Big(T\big(\rho(Tu,Tv)w+\rho(Tw,Tu)v+\rho(Tv,Tw)u+\Theta(Tu,Tv,Tw)\big),\\
&\quad \;\rho(Tu,Tv)w+\rho(Tw,Tu)v+\rho(Tv,Tw)u+\Theta(Tu,Tv,Tw)\Big) \in Gr(T).
\end{align*}
Hence $Gr(T)$ is a subalgebra of the
$\Theta$-twisted semi-direct product $\mathfrak{g}\ltimes_{\Theta}V$.
\end{proof}
Since $Gr(T)$ is isomorphic to $V$ as a vector space  by the identification $(T(u),u)\cong u$. A
   $\Theta$-twisted $\mathcal{O}$-operator $T$ induces a  $3$-Lie algebra structure on $V$ given by
\begin{equation}\label{eq:cro1}
[u,v,w]_T=\rho(Tu,Tv)w+\rho(Tv,Tw)u+\rho(Tw,Tu)v+\Theta(Tu,Tv,Tw).
\end{equation}
It is obvious that $T$ is an algebra morphism, that is $T([u,v,w]_T)=[Tu,Tv,Tw]_{\mathfrak{g}}$.
\begin{defi}
Let $T:V\rightarrow \mathfrak{g}$ be a $\Theta$-twisted $\mathcal{O}$-operator and  $T':V'\rightarrow \mathfrak{g}'$ be a $\Theta'$-twisted $\mathcal{O}$-operator. A morphism of twisted $\mathcal{O}$-operators from $T$ to $T'$ consists of a pair $(\phi,\psi)$ of a $3$-Lie algebra morphism $\phi:\mathfrak{g}\rightarrow \mathfrak{g}'$ and a linear
map $\psi:V\rightarrow V'$ satisfying
\begin{align}
&\psi(\rho(x,y)u)=\rho'(\phi(x),\phi(y))\psi(u),\;\forall x,y\in g,\;u\in V,\\
&\psi \circ \Theta=\Theta' \circ(\phi\otimes \phi \otimes \phi),\\
&\phi \circ T=T' \circ \psi.
\end{align}
\end{defi}
\begin{ex}
Any $\mathcal{O}$-operator (in particular, Rota-Baxter operator of weight $0$) on a $3$-Lie algebra is a $\Theta$-twisted $\mathcal{O}$-operator with $\Theta=0$.
\end{ex}
\begin{ex}
Let $\mathfrak{g}$ be a $3$-Lie algebra and $V$ be a $\mathfrak{g}$-module. Suppose $\theta:\mathfrak{g}\rightarrow V$ is an invertible $1$-cochain in the Chevalley-Eilenberg cochain complex
of $\mathfrak{g}$ with coefficients in $V$. Then $T=\theta^{-1}:V\rightarrow \mathfrak{g}$ is a $\Theta$-twisted $\mathcal{O}$-operator with $\Theta=-\partial \theta$. To verify this, we observe that
\begin{align}\label{eq;ex2}
&\Theta(Tu,Tv,Tw)=-(\partial \theta)(Tu,Tv,Tw)\nonumber\\
&=-\rho(Tu,Tv)\theta(Tw)-\rho(Tv,Tw)\theta(Tu)-\rho(Tw,Tu)\theta(Tv)
+\theta([Tu,Tv,Tw]_\mathfrak{g}).
\end{align}
By applying $T$ to the both sides of \eqref{eq;ex2}, we get the identity \eqref{eq:twisted}.
\end{ex}
\begin{ex}
Let $N : \mathfrak{g} \rightarrow \mathfrak{g}$ be a Nijenhuis operator on a $3$-Lie algebra $\mathfrak{g}$, i.e. $N$ satisfies
\begin{align}\label{eq:nijenhuis}
[Nx,Ny,Nz]_\mathfrak{g}&=N\Big([Nx,Ny,z]_\mathfrak{g}+[Nx,y,Nz]_\mathfrak{g}+[x,Ny,Nz]_\mathfrak{g}\nonumber\\
&-N([Nx,y,z]_\mathfrak{g}+[x,Ny,z]_\mathfrak{g}+[x,y,Nz]_\mathfrak{g})+N^{2}[x,y,z]_\mathfrak{g}\Big),\;for\;x,y,z\in \mathfrak{g}.
\end{align}
In this case, $\mathfrak{g}$ carries a new $3$-Lie algebra structure
\begin{align*}
[x, y,z]_N =&[Nx,Ny,z]_\mathfrak{g}+[Nx,y,Nz]_\mathfrak{g}+[x,Ny,Nz]_\mathfrak{g}-N([Nx,y,z]_\mathfrak{g}\\
&+[x,Ny,z]_\mathfrak{g}+[x,y,Nz]_\mathfrak{g}-N[x,y,z]_\mathfrak{g}).
\end{align*}
We denote this
$3$-Lie algebra  by $\mathfrak{g}_N$. Moreover, the $3$-Lie algebra $\mathfrak{g}_N$ has a representation on $\mathfrak{g}$ given by $\rho(x, y)z= [Nx, Ny,z]_\mathfrak{g}$,
for $x,y, z\in \mathfrak{g}$. With this representation, the map $\Theta:\wedge^{3}\mathfrak{g}_N\rightarrow \mathfrak{g}$ defined by $$\Theta(x,y,z) =-N([Nx,y,z]_\mathfrak{g}+[x,Ny,z]_\mathfrak{g}+[x,y,Nz]_\mathfrak{g}-N[x,y,z]_\mathfrak{g})$$ is a $2$-cocycle
in the Chevalley-Eilenberg cohomology of $\mathfrak{g}_N$ with coefficients in $\mathfrak{g}$. Then it is easy to observe that the
identity map $id : \mathfrak{g}\rightarrow \mathfrak{g}_N$ is a $\Theta$-twisted $\mathcal{O}$-operator.

This example will be more clear in Section 5 when we will introduce $3$-NS-Lie algebras and a functor
from the category of $3$-NS-Lie algebras to the category of twisted $\mathcal{O}$-operators.
\end{ex}
 Given a $\Theta$-twisted $\mathcal{O}$-operator $T$ and a $1$-cochain $\theta$,  we construct a $(\Theta +\partial \theta)$-twisted $\mathcal{O}$-operator under certain condition. First we observe the following.
\begin{pro}\label{iso}
Let $\mathfrak{g}$ be a $3$-Lie algebra and $V$ be a $\mathfrak{g}$-module. For any $2$-cocycle $\Theta \in \mathfrak{C}_{3Lie}^{2}(\mathfrak{g};V)$ and $1$-cochain $\theta \in \mathfrak{C}_{3Lie}^{1}(\mathfrak{g};V)$, we have an isomorphism of $3$-Lie algebras
$$\mathfrak{g}\ltimes_{\Theta}V\cong \mathfrak{g}\ltimes_{\Theta+\partial \theta}V.$$
\end{pro}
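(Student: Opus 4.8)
The plan is to exhibit an explicit linear isomorphism between the two semidirect products that is the identity on the $\mathfrak{g}$-factor and differs from the identity on the $V$-factor by the $1$-cochain $\theta$. Concretely, I would set $\Phi:\mathfrak{g}\ltimes_{\Theta}V\to\mathfrak{g}\ltimes_{\Theta+\partial\theta}V$ by $\Phi(x,u)=(x,u-\theta(x))$. This is manifestly a linear bijection, with inverse $(x,u)\mapsto(x,u+\theta(x))$, so the only thing left to check is that $\Phi$ intertwines the two brackets. Before doing so, I would record that $\Theta+\partial\theta$ is again a $2$-cocycle, since $\partial(\Theta+\partial\theta)=\partial\Theta+\partial^{2}\theta=0$; this guarantees that the target $\mathfrak{g}\ltimes_{\Theta+\partial\theta}V$ is a genuine $3$-Lie algebra and that the statement is meaningful.

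The computational heart of the argument is the explicit evaluation of the Chevalley-Eilenberg differential on the $1$-cochain $\theta$. Specializing the differential formula to $n=1$ yields
\[
(\partial\theta)(x,y,z)=\rho(x,y)\theta(z)+\rho(z,x)\theta(y)+\rho(y,z)\theta(x)-\theta([x,y,z]_\mathfrak{g}),
\]
and this single identity drives everything. I would then compare both sides of $\Phi([(x,u),(y,v),(z,w)]_{\Theta})=[\Phi(x,u),\Phi(y,v),\Phi(z,w)]_{\Theta+\partial\theta}$. The $\mathfrak{g}$-components agree trivially, both being $[x,y,z]_\mathfrak{g}$. For the $V$-components, applying $\Phi$ to the left-hand bracket subtracts $\theta([x,y,z]_\mathfrak{g})$, whereas expanding the right-hand bracket produces the original representation terms together with the extra pieces $-\rho(x,y)\theta(z)-\rho(z,x)\theta(y)-\rho(y,z)\theta(x)$ coming from the shifts in the three slots, plus the term $(\partial\theta)(x,y,z)$ coming from the twist. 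Substituting the formula above shows these extra pieces collapse exactly to $-\theta([x,y,z]_\mathfrak{g})$, so the two $V$-components coincide.

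I expect the only real obstacle to be bookkeeping: one must take the shift in $\Phi$ to be $-\theta$ rather than $+\theta$, so that the correction terms cancel against $\partial\theta$ instead of doubling, and one must match the cyclic pattern $\rho(x,y)w+\rho(z,x)v+\rho(y,z)u$ of the semidirect bracket against the three slots of $\partial\theta$. It is worth noting that the cocycle condition on $\Theta$ itself is not invoked in checking that $\Phi$ is a morphism; it enters only implicitly, in ensuring that both sides are honest $3$-Lie algebras. Once the $V$-component identity is confirmed, $\Phi$ is a bijective morphism of $3$-Lie algebras, hence an isomorphism, which completes the proof.
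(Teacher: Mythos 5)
Your proposal is correct and follows essentially the same route as the paper: the paper's proof uses precisely the map $\psi_{\theta}(x,u)=(x,u-\theta(x))$ and the identity $(\partial\theta)(x,y,z)=\rho(x,y)\theta(z)+\rho(z,x)\theta(y)+\rho(y,z)\theta(x)-\theta([x,y,z]_\mathfrak{g})$ to match the $V$-components. Your additional remarks (explicit inverse, closedness of $\Theta+\partial\theta$) are correct but not needed beyond what the paper records.
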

\begin{proof} Define $\psi_{\theta}:\mathfrak{g}\ltimes_{\Theta}V\rightarrow  \mathfrak{g}\ltimes_{\Theta+\partial \theta}V$ by $\psi_{\theta}(x,u)=(x,u-\theta(x))$, for $(x,u)\in \mathfrak{g}\oplus V$.
Then we have
\begin{align*}
&\psi_{\theta}([(x,u),(y,v),(z,w)]_{\Theta})\\
&=\Big([x,y,z]_\mathfrak{g},\rho(x,y)w+\rho(z,x)v+\rho(y,z)u+\Theta(x,y,z)-\theta([x,y,z]_\mathfrak{g})\Big)\\
&=\Big([x,y,z]_\mathfrak{g},\rho(x,y)w+\rho(z,x)v+\rho(y,z)u+\Theta(x,y,z)\\
&-\rho(x,y)\theta(z)-\rho(z,x)\theta(y)-\rho(y,z)\theta(x)+(\partial \theta)(x,y,z)\Big)\\
&=[(x,u-\theta(x)),(y,v-\theta(y)),(z,w-\theta(z))]_{\Theta+\partial \theta}.
\end{align*}
This proves the result.
\end{proof}
\begin{pro}
Let $T:V\rightarrow \mathfrak{g}$ be a $\Theta$-twisted $\mathcal{O}$-operator. For any $1$-cochain $\theta\in \mathfrak{C}_{3Lie}^{1}(\mathfrak{g};V)$, if the linear map
$(Id_V-\theta \circ T):V\rightarrow V$ is invertible, then the linear map $T\circ (Id_V-\theta \circ T)^{-1}:V\rightarrow \mathfrak{g}$ is a $(\Theta+\partial \theta)$-twisted
$\mathcal{O}$-operator.
\end{pro}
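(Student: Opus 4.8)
The plan is to avoid any direct manipulation of the defining identity \eqref{eq:twisted} and instead exploit the two structural results already in place: the graph characterization of Proposition \ref{graph} and the semi-direct product isomorphism of Proposition \ref{iso}. Throughout I write $\tilde T = T\circ(Id_V-\theta\circ T)^{-1}$ for brevity, and the goal is to show $Gr(\tilde T)$ is a subalgebra of $\mathfrak{g}\ltimes_{\Theta+\partial\theta}V$, which by Proposition \ref{graph} (read with twisting cocycle $\Theta+\partial\theta$) is exactly the assertion that $\tilde T$ is a $(\Theta+\partial\theta)$-twisted $\mathcal{O}$-operator.

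First I would record that, since $T$ is a $\Theta$-twisted $\mathcal{O}$-operator, Proposition \ref{graph} guarantees that $Gr(T)=\{(Tu,u)\mid u\in V\}$ is a subalgebra of $\mathfrak{g}\ltimes_\Theta V$. Next I would invoke Proposition \ref{iso}: the map $\psi_\theta:\mathfrak{g}\ltimes_\Theta V\to\mathfrak{g}\ltimes_{\Theta+\partial\theta}V$ given by $\psi_\theta(x,u)=(x,u-\theta(x))$ is an isomorphism of $3$-Lie algebras. Since an isomorphism of algebras carries subalgebras to subalgebras, the image $\psi_\theta(Gr(T))$ is automatically a subalgebra of the target $\mathfrak{g}\ltimes_{\Theta+\partial\theta}V$. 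No computation is needed for this part.

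The decisive step is then to identify this image as the graph of $\tilde T$. I would compute
\begin{align*}
\psi_\theta(Tu,u)=\big(Tu,\,u-\theta(Tu)\big)=\big(Tu,\,(Id_V-\theta\circ T)(u)\big),
\end{align*}
set $v=(Id_V-\theta\circ T)(u)$, and use the hypothesis that $Id_V-\theta\circ T$ is invertible: the assignment $u\mapsto v$ is then a bijection of $V$, so as $u$ runs over $V$ the element $v$ runs over all of $V$, with $u=(Id_V-\theta\circ T)^{-1}(v)$. The first coordinate accordingly rewrites as $Tu=T(Id_V-\theta\circ T)^{-1}(v)=\tilde T(v)$, yielding
\begin{align*}
\psi_\theta(Gr(T))=\{(\tilde T v,\,v)\mid v\in V\}=Gr(\tilde T).
\end{align*}
Combining the last two steps, $Gr(\tilde T)$ is a subalgebra of $\mathfrak{g}\ltimes_{\Theta+\partial\theta}V$, and Proposition \ref{graph} finishes the proof.

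The only genuinely non-formal point is this identification of $\psi_\theta(Gr(T))$ as a graph: the invertibility hypothesis is needed precisely to ensure that the second coordinates sweep out the whole of $V$, so that the image is the graph of a linear map defined on all of $V$ (rather than on a proper subspace), and to express that map as $\tilde T$. Everything else is formal, resting on the slogan ``an isomorphism of algebras sends subalgebras to subalgebras'' together with the equivalence in Proposition \ref{graph}. One could alternatively substitute $\tilde T$ directly into \eqref{eq:twisted} with twisting term $\Theta+\partial\theta$ and verify the identity by hand, using the $2$-cocycle condition \eqref{eq:cocycle} for $\Theta$ to absorb the $\partial\theta$ contributions; but that route is computationally heavy, whereas the graph argument reuses the machinery already developed and isolates exactly where the invertibility assumption enters.
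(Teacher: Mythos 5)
Your argument is correct and is essentially identical to the paper's own proof: both pass $Gr(T)$ through the isomorphism $\psi_\theta$ of Proposition \ref{iso}, use invertibility of $Id_V-\theta\circ T$ to recognize the image as the graph of $T\circ(Id_V-\theta\circ T)^{-1}$, and conclude via Proposition \ref{graph}. Your write-up just makes the ``isomorphisms send subalgebras to subalgebras'' step and the reparametrization $v=(Id_V-\theta\circ T)(u)$ more explicit than the paper does.
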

\begin{proof} Consider the subalgebra $Gr(T)\subset \mathfrak{g}\ltimes_{\Theta}V$ of the $\Theta$-twisted semi-direct product. Thus by\\ Proposition \ref{iso}, we get that
$$\psi_{\theta}(Gr(T))=\{(Tu,u-(\theta\circ T)(u)|\;u\in V\}\subset \mathfrak{g}\ltimes_{\Theta+\partial \theta}V$$
is a subalgebra. Since the map $(Id_V-\theta \circ T):V\rightarrow V$ is invertible, we have $\psi_{\theta}(Gr(T))$ is the graph of the linear map
$T\circ (Id_V-\theta \circ T)^{-1}$. In this case, it follows from Proposition \ref{graph} that $T\circ (Id_V-\theta \circ T)^{-1}$ is a $(\Theta+\partial \theta)$-twisted $\mathcal{O}$-operator.
\end{proof}
Next, we give a construction of a new $\Theta$-twisted $\mathcal{O}$-operator out of an old one and a
suitable $1$-cocycle. Let $T:V\rightarrow \mathfrak{g}$ be a $\Theta$-twisted $\mathcal{O}$-operator. Suppose $\theta\in \mathfrak{C}_{3Lie}^{1}(\mathfrak{g};V)$ is a
$1$-cocycle in the Chevalley-Eilenberg cochain complex of $\mathfrak{g}$ with coefficients in $V$. Then $\theta$ is
said to be $T$-admissible if the linear map $(Id_V + \theta\circ T) : V \rightarrow V$ is invertible. 
\begin{pro}
Let $\theta\in \mathfrak{C}_{3Lie}^{1}(\mathfrak{g};V)$ be a $T$-admissible $1$-cocycle. Then $T\circ (Id_V + \theta\circ T)^{-1}:V\rightarrow \mathfrak{g}$ is a $\Theta$-twisted $\mathcal{O}$-operator.
\end{pro}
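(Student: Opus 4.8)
The plan is to obtain this as an immediate corollary of the preceding proposition, via a sign substitution that exploits the extra hypothesis now imposed on $\theta$: here $\theta$ is assumed to be a $1$-cocycle, not merely a $1$-cochain. So the only real content is to check that changing $\theta$ to $-\theta$ converts the previous statement into the present one and that the cocycle condition collapses the twisting term back to $\Theta$.

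Concretely, I would apply the previous proposition with the $1$-cochain $-\theta$ substituted for $\theta$. The $T$-admissibility hypothesis states that $(Id_V + \theta\circ T)\colon V\rightarrow V$ is invertible, and this is precisely the invertibility of $(Id_V - (-\theta)\circ T)$ demanded in that proposition. Therefore the proposition applies verbatim and yields that $T\circ (Id_V + \theta\circ T)^{-1}\colon V\rightarrow \mathfrak{g}$ is a $(\Theta + \partial(-\theta))$-twisted $\mathcal{O}$-operator.

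It remains to identify the twist. Since $\theta$ is a $1$-cocycle we have $\partial\theta = 0$, hence $\partial(-\theta) = -\partial\theta = 0$ and so $\Theta + \partial(-\theta) = \Theta$. Thus $T\circ (Id_V + \theta\circ T)^{-1}$ is a $\Theta$-twisted $\mathcal{O}$-operator, which is the claim. If one prefers a self-contained argument avoiding the reference, I would instead run the graph argument directly: by Proposition \ref{iso} the map $\psi_{-\theta}$ sends the subalgebra $Gr(T)\subset \mathfrak{g}\ltimes_{\Theta}V$ to a subalgebra of $\mathfrak{g}\ltimes_{\Theta+\partial(-\theta)}V = \mathfrak{g}\ltimes_{\Theta}V$, namely $\{(Tu, u + (\theta\circ T)(u))\mid u\in V\}$; invertibility of $(Id_V + \theta\circ T)$ makes this set the graph of $T\circ (Id_V + \theta\circ T)^{-1}$, and Proposition \ref{graph} then identifies it as a $\Theta$-twisted $\mathcal{O}$-operator.

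I do not expect a genuine obstacle in this proof; it is a reduction rather than a new computation. The one point requiring care is the sign bookkeeping, ensuring that the $-\theta$ substitution matches the form of the previous proposition exactly and that the cocycle hypothesis (as opposed to the mere cochain hypothesis used earlier) is what makes the twisting constant $\Theta$ rather than $\Theta + \partial\theta$.
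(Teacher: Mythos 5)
Your proposal is correct and is essentially the paper's own argument: the paper proves this by showing the deformed subspace $\tau_{\theta}(Gr(T))=\{(Tu,u+(\theta\circ T)(u))\mid u\in V\}$ is a subalgebra of $\mathfrak{g}\ltimes_{\Theta}V$ (which is exactly your $\psi_{-\theta}(Gr(T))$ observation, using the cocycle condition so that $\Theta+\partial(-\theta)=\Theta$) and then invoking invertibility of $Id_V+\theta\circ T$ together with Proposition \ref{graph}. Your packaging of the first paragraph as a direct corollary of the preceding proposition applied to $-\theta$ is a slightly cleaner way to say the same thing, and the sign bookkeeping you flag does check out.
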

\begin{proof} Consider the deformed subspace
$$\tau_{\theta}(Gr(T))=\{(Tu,u+(\theta\circ T)(u))|\;u\in V\}\subset \mathfrak{g}\ltimes_{\Theta}V.$$
Since $\theta$ is a $1$-cocycle, $\tau_{\theta}(Gr(T))\subset g\ltimes_{\Theta}V$ turns out to be a subalgebra. Further,
the map $(Id_V + \theta \circ T)$ is invertible implies that $\tau_{\theta}(Gr(T))$ is the graph of the map
$T\circ (Id_V + \theta\circ T)^{-1}$. Hence the result follows from Proposition \ref{graph}.
\end{proof}

The $\Theta$-twisted $\mathcal{O}$-operator in the above proposition is called the gauge transformation
of $T$ associated with $\theta$. We denote this $\Theta$-twisted $\mathcal{O}$-operator
simply by $T_\theta$.
\begin{pro}
Let $T$ be a $\Theta$-twisted $\mathcal{O}$-operator and $\theta$ be a $T$-admissible $1$-cocycle. Then the $3$-Lie algebra structures on $V$ induced from the $\Theta$-twisted $\mathcal{O}$-operators $T$ and $T_\theta$ are isomorphic.
\end{pro}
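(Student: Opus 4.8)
The plan is to exhibit an explicit $3$-Lie algebra isomorphism between $(V,[\cdot,\cdot,\cdot]_T)$ and $(V,[\cdot,\cdot,\cdot]_{T_\theta})$, and the natural candidate is the map $\Phi:=Id_V+\theta\circ T:V\rightarrow V$, which is invertible precisely because $\theta$ is $T$-admissible. Recalling that $T_\theta=T\circ(Id_V+\theta\circ T)^{-1}=T\circ\Phi^{-1}$, one has $T_\theta\circ\Phi=T$. I claim that $\Phi$ intertwines the two induced brackets, namely $\Phi([u,v,w]_T)=[\Phi u,\Phi v,\Phi w]_{T_\theta}$ for all $u,v,w\in V$. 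One could try to prove this by expanding both brackets via \eqref{eq:cro1} directly, but that is tedious; instead I would route the argument through the twisted semi-direct product and reuse Proposition \ref{iso}.

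The key observation is that since $\theta$ is a $1$-\emph{cocycle} we have $\partial\theta=0$, so applying Proposition \ref{iso} with $-\theta$ in place of $\theta$ yields an isomorphism of $3$-Lie algebras
$$\eta:=\psi_{-\theta}:\mathfrak{g}\ltimes_{\Theta}V\longrightarrow \mathfrak{g}\ltimes_{\Theta-\partial\theta}V=\mathfrak{g}\ltimes_{\Theta}V,\qquad \eta(x,a)=(x,a+\theta(x)).$$
In other words, $\eta$ is in fact an \emph{automorphism} of the single $3$-Lie algebra $\mathfrak{g}\ltimes_{\Theta}V$, precisely because the $\partial\theta$-correction term produced by Proposition \ref{iso} vanishes.

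Next I would track the effect of $\eta$ on the graph. By definition $\eta(Gr(T))=\{(Tu,\,u+\theta(Tu))\mid u\in V\}=\{(Tu,\Phi u)\mid u\in V\}$; writing $u'=\Phi u$ and using $Tu=T\Phi^{-1}u'=T_\theta u'$ shows that this set is exactly $Gr(T_\theta)=\tau_\theta(Gr(T))$. Hence the automorphism $\eta$ restricts to a $3$-Lie algebra isomorphism between the subalgebras $Gr(T)$ and $Gr(T_\theta)$ of $\mathfrak{g}\ltimes_{\Theta}V$.

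Finally I would transport this isomorphism along the canonical identifications. The map $u\mapsto(Tu,u)$ is a $3$-Lie algebra isomorphism $(V,[\cdot,\cdot,\cdot]_T)\cong Gr(T)$ (this is precisely how $[\cdot,\cdot,\cdot]_T$ was defined, making $T$ a morphism), and likewise $u'\mapsto(T_\theta u',u')$ is an isomorphism $(V,[\cdot,\cdot,\cdot]_{T_\theta})\cong Gr(T_\theta)$. Composing the first identification with $\eta$ and then with the inverse of the second sends $u\mapsto(Tu,u)\mapsto(Tu,\Phi u)=(T_\theta\Phi u,\Phi u)\mapsto\Phi u$, so the composite is exactly $\Phi$. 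Being a composite of $3$-Lie algebra isomorphisms, $\Phi$ is itself an isomorphism $(V,[\cdot,\cdot,\cdot]_T)\cong(V,[\cdot,\cdot,\cdot]_{T_\theta})$, as required. I expect the only genuinely content-bearing step to be the verification that $\eta$ is a morphism, which is where the cocycle condition $\partial\theta=0$ is essential; the $T$-admissibility hypothesis enters only to guarantee that $\Phi$ is bijective rather than merely a homomorphism.
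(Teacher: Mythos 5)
Your proof is correct, but it takes a genuinely different route from the paper. The paper verifies directly, by expanding $[\Phi u,\Phi v,\Phi w]_{T_\theta}$ with $\Phi=Id_V+\theta\circ T$ via \eqref{eq:cro1} and using $T_\theta\circ\Phi=T$, that the extra terms $\rho(Tu,Tv)(\theta\circ T)(w)+\rho(Tv,Tw)(\theta\circ T)(u)+\rho(Tw,Tu)(\theta\circ T)(v)$ recombine into $\theta([Tu,Tv,Tw]_{\mathfrak g})=\theta(T[u,v,w]_T)$ by the cocycle condition on $\theta$ --- a three-line computation once the cancellation is spotted. You instead lift everything to the twisted semi-direct product: Proposition \ref{iso} applied to $-\theta$ shows $\eta=\psi_{-\theta}$ is an automorphism of $\mathfrak g\ltimes_\Theta V$ (here $\partial\theta=0$ is what makes the target equal to the source), $\eta$ carries $Gr(T)$ onto $Gr(T_\theta)$, and transporting along the canonical identifications $u\mapsto(Tu,u)$ and $u'\mapsto(T_\theta u',u')$ recovers exactly $\Phi$. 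Your identification of the composite with $\Phi$ and the claim that $u\mapsto(Tu,u)$ is an isomorphism onto the subalgebra $Gr(T)$ both check out against the bracket $[\cdot,\cdot,\cdot]_\Theta$. What your approach buys is conceptual economy and a cleaner justification of the assertion, left terse in the paper's earlier proposition, that $\tau_\theta(Gr(T))$ is a subalgebra because $\theta$ is a cocycle; what the paper's computation buys is self-containedness and an explicit display of where each hypothesis enters. Both arguments use $T$-admissibility only for invertibility of $\Phi$, as you note.
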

\begin{proof}
 Consider the linear isomorphism $(Id_V + \theta\circ T) : V\rightarrow V$. Moreover, for any
$u, v,w\in V$, we have
\begin{align*}
&[(Id_V +\theta\circ T)(u),(Id_V + \theta\circ T)(v),(Id_V + \theta\circ T)(w)]_{T_B}\\
&=\rho(Tv,Tw)(Id_V + \theta\circ T)(u)+\rho(Tw,Tu)(Id_V + \theta\circ T)(v)\\
&+\rho(Tu,Tv)(Id_V + \theta\circ T)+\Theta(Tu,Tv,Tw)\\
&=\rho(Tv,Tw)u+\rho(Tw,Tu)v+\rho(Tu,Tv)w+\Theta(Tu,Tv,Tw)\\
&+\rho(Tv,Tw)(\theta\circ T)(u)+\rho(Tw,Tu)(\theta\circ T)(v)+\rho(Tu,Tv)(\theta\circ T)(w)\\
&=[u,v,w]_T+\theta([Tu,Tv,Tw]_\mathfrak{g})\\
&=[u,v,w]_T+\theta\circ T([u,v,w]_T)=(Id_V + \theta\circ T)([u,v,w]_T).
\end{align*}
This shows that $(Id_V +\theta\circ T) : (V, [\cdot,\cdot,\cdot]_T) \rightarrow (V, [\cdot,\cdot,\cdot]_{T_\theta} )$ is a $3$-Lie algebra isomorphism.
\end{proof}

\section{Cohomology of twisted $\mathcal{O}$-operators}
In this section, we define a cohomology of a $\Theta$-twisted $\mathcal{O}$-operator
 as the Chevalley-Eilenberg cohomology of the $3$-Lie algebra $(V, [\cdot,\cdot, \cdot]_T)$ constructed in
Proposition \ref{eq:cro1} with coefficients in a suitable representation on $\mathfrak{g}$. This cohomology will be used in  Section 4  to study deformations of $T$.
\begin{pro}
Let $T$ be a $\Theta$-twisted $\mathcal{O}$-operator on a $3$-Lie algebra $(\mathfrak{g}, [\cdot,\cdot, \cdot]_\mathfrak{g})$ with respect
to a representation $(V,\rho)$. Define $\rho_{\Theta}:\wedge^{2}V\rightarrow gl(\mathfrak{g})$ by
\begin{equation}
\rho_{\Theta}(u,v)x=[Tu,Tv,x]_\mathfrak{g}-T\Big(\rho(Tv,x)u+\rho(x,Tu)v+\Theta(x,Tu,Tv)\Big),\;\forall u,v\in V,x\in \mathfrak{g}.
\end{equation}
Then $(\mathfrak{g},\rho_{\Theta} )$ is a representation of the $3$-Lie algebra $( V, [\cdot,\cdot, \cdot]_T)$ on the vector space $\mathfrak{g}$.
\end{pro}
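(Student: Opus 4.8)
The plan is to avoid a term-by-term verification of the two axioms in \eqref{eq:rep} and instead realize $\rho_\Theta$ intrinsically inside the twisted semi-direct product. First I would recall from Proposition \ref{graph} and \eqref{eq:cro1} that $Gr(T)=\{(Tu,u)\mid u\in V\}$ is a subalgebra of $\mathfrak{A}:=\mathfrak{g}\ltimes_\Theta V$ which is isomorphic to $(V,[\cdot,\cdot,\cdot]_T)$ via $u\mapsto (Tu,u)$. Since every $(x,u)\in\mathfrak{g}\oplus V$ decomposes uniquely as $(Tu,u)+(x-Tu,0)$, the subspace $\widehat{\mathfrak{g}}:=\{(x,0)\mid x\in\mathfrak{g}\}$ is a vector-space complement of $Gr(T)$, so that $\widehat{\mathfrak{g}}\cong \mathfrak{A}/Gr(T)$ as vector spaces through $x\mapsto \overline{(x,0)}$.

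The key observation I would then use is the general fact that, for any subalgebra $\mathfrak{B}$ of a $3$-Lie algebra $\mathfrak{A}$, the quotient space $\mathfrak{A}/\mathfrak{B}$ carries a canonical $\mathfrak{B}$-module structure defined by $\bar\rho(b_1,b_2)\overline{a}=\overline{[b_1,b_2,a]_\mathfrak{A}}$. This is well defined precisely because $\mathfrak{B}$ is closed under the bracket, so that $\overline{[b_1,b_2,a]}$ vanishes whenever $a\in\mathfrak{B}$. Both axioms in \eqref{eq:rep} descend from the Filippov--Jacobi identity \eqref{eq:de1}: the first axiom is \eqref{eq:de1} read with operating pair $(b_1,b_2)$, while the second axiom is \eqref{eq:de1} read with operating pair $(b_4,a)$, after using the total skew-symmetry of the bracket to reorder the three resulting terms into the cyclic pattern of the second axiom. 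I would record these two reductions once and for all.

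Finally I would transport this module structure along the isomorphisms $V\cong Gr(T)$ and $\widehat{\mathfrak{g}}\cong \mathfrak{A}/Gr(T)$ and check that it coincides with $\rho_\Theta$. A direct computation in $\mathfrak{A}$ gives
\[
[(Tu,u),(Tv,v),(x,0)]_\Theta=\Big([Tu,Tv,x]_\mathfrak{g},\ \rho(Tv,x)u+\rho(x,Tu)v+\Theta(Tu,Tv,x)\Big),
\]
and projecting onto $\widehat{\mathfrak{g}}$ along $Gr(T)$ — that is, subtracting $T$ applied to the $V$-component — returns exactly $[Tu,Tv,x]_\mathfrak{g}-T\big(\rho(Tv,x)u+\rho(x,Tu)v+\Theta(x,Tu,Tv)\big)=\rho_\Theta(u,v)x$, where I use $\Theta(Tu,Tv,x)=\Theta(x,Tu,Tv)$ from the skew-symmetry of $\Theta:\wedge^3\mathfrak{g}\to V$. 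Since the transport of a module structure along isomorphisms is again a module structure, this identifies $(\mathfrak{g},\rho_\Theta)$ with the quotient $\mathfrak{B}$-module and proves the claim.

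I expect the main obstacle to be pure bookkeeping rather than any conceptual difficulty: confirming that the second axiom in \eqref{eq:rep} really is \eqref{eq:de1} with operating pair $(b_4,a)$ (the cyclic reorderings and signs must be tracked carefully), and matching the argument orderings of $\rho$ and $\Theta$ in the projection with those in the definition of $\rho_\Theta$. A more computational alternative would discard the semi-direct product and verify \eqref{eq:rep} for $\rho_\Theta$ directly, expanding both sides with \eqref{eq:twisted}, the axioms \eqref{eq:rep} for $\rho$, and the $2$-cocycle condition \eqref{eq:cocycle}; in that route the cocycle identity is precisely what cancels the residual $\Theta$-terms, and that cancellation would be the delicate step.
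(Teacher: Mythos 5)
Your proposal is correct, and it takes a genuinely different route from the paper. The paper proves the proposition by brute force: it expands both representation axioms for $\rho_{\Theta}$ in full, invokes \eqref{eq:de1}, \eqref{eq:rep} and \eqref{eq:twisted} to collapse most terms, and recognizes the residue as $-T\big((\partial\Theta)(Tu_1,Tu_2,Tu_3,Tu_4,x)\big)$, which vanishes by the $2$-cocycle condition \eqref{eq:cocycle}. You instead factor the statement through two structural facts: (i) for any subalgebra $\mathfrak{B}$ of a $3$-Lie algebra $\mathfrak{A}$, the quotient $\mathfrak{A}/\mathfrak{B}$ is a $\mathfrak{B}$-module via $\bar\rho(b_1,b_2)\overline{a}=\overline{[b_1,b_2,a]_{\mathfrak{A}}}$ (both axioms of \eqref{eq:rep} are indeed single instances of \eqref{eq:de1}, the second with operating pair $(b_4,a)$ after even permutations — I checked the signs and they work); and (ii) $Gr(T)$ is a subalgebra of $\mathfrak{g}\ltimes_{\Theta}V$ isomorphic to $(V,[\cdot,\cdot,\cdot]_T)$, with $\widehat{\mathfrak{g}}=\{(x,0)\}$ a linear complement, and the projection $(y,w)\mapsto(y-Tw,0)$ transports the quotient module to exactly $\rho_{\Theta}$ (using $\Theta(Tu,Tv,x)=\Theta(x,Tu,Tv)$). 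This is cleaner and more informative than the paper's computation: the $\mathcal{O}$-operator identity \eqref{eq:twisted} is consumed entirely by Proposition \ref{graph}, and the cocycle condition \eqref{eq:cocycle} entirely by the fact that $\mathfrak{g}\ltimes_{\Theta}V$ satisfies \eqref{eq:de1}, so one sees precisely where each hypothesis enters. The only cost is that you must actually state and prove the quotient-module lemma (the paper nowhere records it), and write out the transport; if you do include those two short verifications, your argument is complete and preferable.
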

\begin{proof} By a direct calculation using the definition of $\rho_{\Theta}$, we get
\begin{align*}
&\rho_{\Theta}(u_1,u_2)\rho_{\Theta}(u_3,u_4)x-\rho_{\Theta}([u_1,u_2,u_3]_T,u_4)x\\
&-\rho_{\Theta}(u_3,[u_1,u_2,u_4]_T)x-\rho_{\Theta}(u_3,u_4)\rho_{T}(u_1,u_2)x\\
&=\quad[Tu_1,Tu_2,[Tu_3,Tu_4,x]_\mathfrak{g}]_\mathfrak{g}+[Tu_1,Tu_2,T\rho(Tu_3,x)u_4]_\mathfrak{g}-[Tu_1,Tu_2,T\rho(Tu_4,x)u_3]_\mathfrak{g}\\
&-[Tu_1,Tu_2,T\Theta(x,Tu_3,Tu_4)]_\mathfrak{g}+T\rho(Tu_1,[Tu_3,Tu_4,x]_\mathfrak{g})u_2+T\rho(Tu_1,T\rho(Tu_3,x)u_4)u_2\\
&-T\rho(Tu_1,T\rho(Tu_4,x)u_3)u_2-T\rho(Tu_1,T\Theta(x,Tu_3,Tu_4))u_2-T\rho(Tu_2,[Tu_3,Tu_4,x]_\mathfrak{g})u_1\\
&-T\rho(Tu_2,T\rho(Tu_3,x)u_4)u_1+T\rho(Tu_2,T\rho(Tu_4,x)u_3)u_1+T\rho(Tu_2,T\Theta(x,Tu_3,Tu_4))u_1\\
&-T\Theta(T\rho(Tu_3,x)u_4,Tu_1,Tu_2)+T\Theta(T\rho(Tu_4,x)u_3,Tu_1,Tu_2)\\
&+T\Theta(T\Theta(x,Tu_3,T_4),Tu_1,Tu_2)-T\Theta([Tu_3,Tu_4,x]_\mathfrak{g},Tu_1,Tu_2)\\
&-[[Tu_1,Tu_2,Tu_3]_\mathfrak{g},Tu_4,x]_\mathfrak{g}-T(\rho([Tu_1,Tu_2,Tu_3]_\mathfrak{g},x)u_4+T\rho(Tu_4,x)\rho(Tu_1,Tu_2)u_3\\
&+T\rho(Tu_4,x)\rho(Tu_2,Tu_3)u_1+T\rho(Tu_4,x)\rho(Tu_3,Tu_1)u_2+T\rho(Tu_4,x)\Theta(Tu_1,Tu_2,Tu_3)\\
&+T\Theta(x,[Tu_1,Tu_2,Tu_3]_\mathfrak{g},Tu_4)-[Tu_3,[Tu_1,Tu_2,Tu_4]_\mathfrak{g},x]_\mathfrak{g}-T\rho(Tu_3,x)\rho(Tu_1,Tu_2)u_4\\
&-T\rho(Tu_3,x)\rho(Tu_2,Tu_4)u_1-T\rho(Tu_3,x)\rho(Tu_4,Tu_1)u_2-T\rho(Tu_3,x)\Theta(Tu_1,Tu_2,Tu_4)\\
&+T(\rho([Tu_1,Tu_2,Tu_4]_\mathfrak{g},x)u_3+T\Theta(x,Tu_3,[Tu_1,Tu_2,Tu_4]_\mathfrak{g})\\
&-[Tu_3,Tu_4,[Tu_1,Tu_2,x]_\mathfrak{g}]_\mathfrak{g}-[Tu_3,Tu_4,T\rho(Tu_1,x)u_2]_\mathfrak{g}+[Tu_3,Tu_4,T\rho(Tu_2,x)u_1]_\mathfrak{g}\\
&+[Tu_3,Tu_4,T\Theta(x,Tu_1,Tu_2)]_\mathfrak{g}-T\rho(Tu_3,[Tu_1,Tu_2,x]_\mathfrak{g})u_4-T\rho(Tu_3,T\rho(Tu_1,x)u_2)u_4\\
&+T\rho(Tu_3,T\rho(Tu_2,x)u_1)u_4+T\rho(Tu_3,T\Theta(x,Tu_1,Tu_2))u_4+T\rho(Tu_4,[Tu_1,Tu_2,x]_\mathfrak{g})u_3\\
&+T\rho(Tu_4,T\rho(Tu_1,x)u_2)u_3-T\rho(Tu_4,T\rho(Tu_2,x)u_1)u_3-T\rho(Tu_4,T\Theta(x,Tu_1,Tu_2))u_3\\
&+T\Theta(T\rho(Tu_1,x)u_2,Tu_3,Tu_4)-T\Theta(T\rho(Tu_2,x)u_1,Tu_3,Tu_4)\\
&-T\Theta(T\Theta(x,Tu_1,Tu_2),Tu_3,Tu_4)+T\Theta([Tu_1,Tu_2,x]_\mathfrak{g},Tu_3,Tu_4)\\
^{\eqref{eq:de1}+\eqref{eq:rep}+\eqref{eq:twisted}}&=-T\big(\Theta(Tu_1,Tu_2,[Tu_3,Tu_4,x]_\mathfrak{g})+\rho(Tu_1,Tu_2)\Theta(Tu_3,Tu_4,x)\\
&-\rho(Tu_4,x)\Theta(Tu_1,Tu_2,Tu_3)-\Theta([Tu_1,Tu_2,Tu_3]_\mathfrak{g},Tu_4,x)\\
&-\rho(x,Tu_3)\Theta(Tu_1,Tu_2,Tu_4)-\Theta(Tu_3,[Tu_1,Tu_2,Tu_4]_\mathfrak{g},x)\\
&-\rho(Tu_3,Tu_4)\Theta(Tu_1,Tu_2,x)-\Theta(Tu_3,Tu_4,[Tu_1,Tu_2,x]_\mathfrak{g})\big)\\
^{\eqref{eq:cocycle}}&=-T\big((\partial \Theta)(Tu_1,Tu_2,Tu_3,Tu_4,x)\big)=0.
\end{align*}
Similarly,
\begin{align*}
&\rho_{\Theta}([u_1,u_2,u_3]_T,u_4)x-\rho_{\Theta}(u_1,u_2)\rho_{\Theta}(u_3,u_4)x\\
&-\rho_{\Theta}(u_2,u_3)\rho_{\Theta}(u_1,u_4)x-\rho_{\Theta}(u_3,u_1)\rho_{\Theta}(u_2,u_4)x=0.
\end{align*}
Hence the result follows.\end{proof}

Let $\partial_{\Theta}: \mathfrak{C}_{3Lie}^{n}(V;\mathfrak{g})\rightarrow \mathfrak{C}_{3Lie}^{n+1}(V;\mathfrak{g})$,$\;(n \geq 1)$ be the corresponding coboundary operator
of the $3$-Lie algebra $(V, [\cdot,\cdot,\cdot]_T )$ with coefficients in the representation $(\mathfrak{g},\rho_\Theta )$. More precisely,\\
$\partial_{\Theta}: \mathfrak{C}_{3Lie}^{n}(V;\mathfrak{g})\rightarrow \mathfrak{C}_{3Lie}^{n+1}(V;\mathfrak{g})$ is given by
\begin{align}
&\quad\;(\partial_{\Theta} f)(u_1,\cdots,u_{2n+1})\nonumber\\
&=(-1)^{n+1}\Big([Tu_{2n+1},Tu_{2n-1},f(u_1,\cdots,u_{2n-2},u_{2n})]_\mathfrak{g}-T\rho(Tu_{2n-1},f(u_1,\cdots,u_{2n-2},u_{2n}))u_{2n+1}\nonumber\\
&-T\rho(f(u_1,\cdots,u_{2n-2},u_{2n}),Tu_{2n+1})u_{2n-1}
-T\Theta(f(u_1,\cdots,u_{2n-2},u_{2n}),Tu_{2n+1},Tu_{2n-1})\Big)\nonumber\\
&+(-1)^{n+1}\Big([Tu_{2n},Tu_{2n+1},f(u_1,\cdots,u_{2n-1})]_\mathfrak{g}-T\rho(Tu_{2n+1},f(u_1,\cdots,u_{2n-1}))u_{2n}\nonumber\\
&-T\rho(f(u_1,\cdots,u_{2n-1}),Tu_{2n})u_{2n+1}
-T\Theta(f(u_1,\cdots,u_{2n-1}),Tu_{2n},Tu_{2n+1})\Big)\nonumber\\
&+\sum_{k=1}^{n}(-1)^{k+1}\Big([Tu_{2k-1},Tu_{2k},f(u_1,u_2,\cdots,\widehat{u_{2k-1}},\widehat{u_{2k}},\cdots,u_{2n+1})]_\mathfrak{g}\nonumber\\
&-T\rho(Tu_{2k},f(u_1,u_2,\cdots,\widehat{u_{2k-1}},\widehat{u_{2k}},\cdots,u_{2n+1}))u_{2k-1}\nonumber\\
&-T\rho(f(u_1,u_2,\cdots,\widehat{u_{2k-1}},\widehat{u_{2k}},\cdots,u_{2n+1}),Tu_{2k-1})u_{2k}\nonumber\\
&-T\Theta(f(u_1,u_2,\cdots,\widehat{u_{2k-1}},\widehat{u_{2k}},\cdots,u_{2n+1}),Tu_{2k-1},Tu_{2k})\Big)\nonumber\\
&+\sum_{k=1}^{n}\sum_{j=2k+1}^{2n+1}(-1)^{k}f\Big(u_1,u_2,\cdots,\widehat{u_{2k-1}},\widehat{u_{2k}},\cdots,\rho(Tu_{2k-1},Tu_{2k})u_j\nonumber\\
&+\rho(Tu_{2k},Tu_j)u_{2k-1}+\rho(Tu_j,Tu_{2k-1})u_{2k}+\Theta(Tu_{2k-1},Tu_{2k},Tu_j),\cdots,u_{2n+1}\Big),
\end{align}
for $f\in \mathfrak{C}_{3Lie}^{n}(V;\mathfrak{g})$ and $u_1,u_2,...,u_{2n+1}\in V$.\\

 It is obvious that $f\in \mathfrak{C}_{3Lie}^{1}(V;\mathfrak{g})$ is closed if and only if
\begin{align*}
&[Tu,Tv,f(w)]_\mathfrak{g}+[f(u),Tv,Tw]_\mathfrak{g}+[Tu,f(v),Tw]_\mathfrak{g}\\
&-f\Big(\rho(Tu,Tv)w+\rho(Tv,Tw)u+\rho(Tw,Tu)v+\Theta(Tu,Tv,Tw)\Big)\\
&-T\Big(\rho(Tv,f(w))u+\rho(f(w),Tu)v+\Theta(f(w),Tu,Tv)\Big)\\
&-T\Big(\rho(Tw,f(u))v+\rho(f(u),Tv)w+\Theta(f(u),Tv,Tw)\Big)\\
&-T\Big(\rho(Tu,f(v))w+\rho(f(v),Tw)u+\Theta(f(v),Tw,Tu)\Big)=0.
\end{align*}
For any $\mathfrak{X}\in \mathfrak{g}\wedge \mathfrak{g}$, we define $\delta(\mathfrak{X}):V\rightarrow \mathfrak{g}$ by
\begin{equation}
\delta(\mathfrak{X})v=T\big(\rho(\mathfrak{X})v+\Theta(\mathfrak{X},Tv)\big)-[\mathfrak{X},Tv]_\mathfrak{g},\;\forall v\in V.
\end{equation}
\begin{pro}
Let $T$ be a $\Theta$-twisted $\mathcal{O}$-operator on a $3$-Lie algebra $(\mathfrak{g}, [\cdot,\cdot, \cdot]_\mathfrak{g})$ with
respect to a representation $(V,\rho)$. Then $\delta(\mathfrak{X})$ is a $1$-cocycle on the $3$-Lie algebra $(V, [\cdot,\cdot,\cdot]_T )$
with coefficients in $(\mathfrak{g},\rho_\Theta )$.
\end{pro}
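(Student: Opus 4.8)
The plan is to bypass the lengthy direct verification of the closedness equation displayed just before the statement, and instead extract the cocycle condition from the geometry of the $\Theta$-twisted semidirect product $\hat{\mathfrak g}:=\mathfrak g\ltimes_\Theta V$. Since $\delta$ is linear in $\mathfrak X$, it suffices to treat a decomposable element $\mathfrak X=x_1\wedge x_2$. Let $\iota:V\to Gr(T)$, $\iota(v)=(Tv,v)$, be the canonical isomorphism onto the graph, and let $P:\hat{\mathfrak g}\to\mathfrak g$, $P(z,w)=z-Tw$, be the projection onto the complement $\mathfrak g\oplus 0$ of $Gr(T)$. Writing $\mathrm{ad}_{\mathfrak X}=[(x_1,0),(x_2,0),-]_\Theta$, a one-line expansion of the bracket gives
\[
[(x_1,0),(x_2,0),(Tv,v)]_\Theta=\big([x_1,x_2,Tv]_\mathfrak g,\ \rho(x_1,x_2)v+\Theta(x_1,x_2,Tv)\big),
\]
whence $\delta(\mathfrak X)v=-P\big(\mathrm{ad}_{\mathfrak X}\iota(v)\big)$; that is, $\delta(\mathfrak X)=-P\circ\mathrm{ad}_{\mathfrak X}\circ\iota$.

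Next I would record the parallel description of the representation $\rho_\Theta$. Expanding $[(Tu,u),(Tv,v),(x,0)]_\Theta$ and using the skew-symmetry of $\Theta$ together with the cyclic invariance of the $3$-bracket shows, for every $x\in\mathfrak g$, that $\rho_\Theta(u,v)x=P\big([\iota(u),\iota(v),(x,0)]_\Theta\big)$; by cyclicity the element $(x,0)$ may be moved to the first slot. Because $Gr(T)$ is a subalgebra (Proposition \ref{graph}), the map $P$ annihilates every bracket of three graph elements. Decomposing an arbitrary $Y\in\hat{\mathfrak g}$ as $Y=\iota(w)+\big(PY,0\big)$ with $\iota(w)\in Gr(T)$, and applying $P$ to $[Y,\iota(v),\iota(w')]_\Theta$, therefore yields the key identity
\[
P\big([Y,\iota(v),\iota(w')]_\Theta\big)=\rho_\Theta(v,w')\,(PY),\qquad Y\in\hat{\mathfrak g}.
\]

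The final step feeds these two formulas into the derivation property of $\mathrm{ad}_{\mathfrak X}$. By the Filippov-Jacobi identity \eqref{eq:de1} applied to $\hat{\mathfrak g}$, the operator $\mathrm{ad}_{\mathfrak X}$ is a derivation of $\hat{\mathfrak g}$, so
\[
\mathrm{ad}_{\mathfrak X}[\iota(u),\iota(v),\iota(w)]_\Theta=[\mathrm{ad}_{\mathfrak X}\iota(u),\iota(v),\iota(w)]_\Theta+[\iota(u),\mathrm{ad}_{\mathfrak X}\iota(v),\iota(w)]_\Theta+[\iota(u),\iota(v),\mathrm{ad}_{\mathfrak X}\iota(w)]_\Theta.
\]
Applying $P$, using $\delta(\mathfrak X)=-P\circ\mathrm{ad}_{\mathfrak X}\circ\iota$ on the left together with $Gr(T)\cong(V,[\cdot,\cdot,\cdot]_T)$, and using the key identity on the right (after moving the modified slot into the first position by skew-symmetry, which produces the signs), collapses the equation to
\[
\delta(\mathfrak X)[u,v,w]_T=\rho_\Theta(v,w)\delta(\mathfrak X)u-\rho_\Theta(u,w)\delta(\mathfrak X)v+\rho_\Theta(u,v)\delta(\mathfrak X)w.
\]
Since $\rho_\Theta(w,u)=-\rho_\Theta(u,w)$, this is exactly $(\partial_\Theta\delta(\mathfrak X))(u,v,w)=0$, the desired $1$-cocycle condition.

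As for difficulties: the bracket expansions establishing the two projection formulas are routine and I would compress them to a line each; the only genuine obstacle is the sign and argument-order bookkeeping --- matching $\Theta(x_1,x_2,Tv)$ with its cyclic rearrangements and reconciling the placement of the distinguished slot in the derivation identity against the term order in the $3$-Lie $1$-cocycle formula $(\partial_\Theta f)(u,v,w)=\rho_\Theta(w,u)f(v)+\rho_\Theta(v,w)f(u)+\rho_\Theta(u,v)f(w)-f([u,v,w]_T)$. A useful sanity check is that the $2$-cocycle condition \eqref{eq:cocycle} for $\Theta$ is never invoked directly: it has already been spent in guaranteeing that $\hat{\mathfrak g}$ is a $3$-Lie algebra, so that both the semidirect bracket satisfies \eqref{eq:de1} and $\mathrm{ad}_{\mathfrak X}$ is a derivation.
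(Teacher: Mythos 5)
Your proof is correct, and it takes a genuinely different route from the paper's. The paper proves this proposition by a full direct expansion of $(\partial_\Theta\delta(\mathfrak X))(u,v,w)$, invoking in turn the twisted $\mathcal O$-operator identity \eqref{eq:twisted}, the Filippov--Jacobi identity \eqref{eq:de1}, the representation axioms \eqref{eq:rep}, and finally the $2$-cocycle condition \eqref{eq:cocycle}, recognizing the surviving terms as $-T\big((\partial\Theta)(\mathfrak X,Tu,Tv,Tw)\big)=0$. You instead package each of those inputs into a structural fact about $\hat{\mathfrak g}=\mathfrak g\ltimes_\Theta V$: the cocycle condition is spent in making $\hat{\mathfrak g}$ a $3$-Lie algebra (so that $\mathrm{ad}_{\mathfrak X}$ is a derivation by \eqref{eq:de1}), the operator identity is spent in making $Gr(T)$ a subalgebra (Proposition \ref{graph}) with $[\iota(u),\iota(v),\iota(w)]_\Theta=\iota([u,v,w]_T)$, and the rest is linear algebra with the projection $P$ along the splitting $\hat{\mathfrak g}=Gr(T)\oplus(\mathfrak g\oplus 0)$. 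I checked the three pivotal identities --- $\delta(\mathfrak X)=-P\circ\mathrm{ad}_{\mathfrak X}\circ\iota$, the matching of $P\big([\iota(u),\iota(v),(x,0)]_\Theta\big)$ with $\rho_\Theta(u,v)x$ (which uses that $\Theta$ is alternating, so $\Theta(Tu,Tv,x)=\Theta(x,Tu,Tv)$), and the sign bookkeeping in the final application of $P$ to the derivation identity --- and all are right; the result $\delta(\mathfrak X)[u,v,w]_T=\rho_\Theta(v,w)\delta(\mathfrak X)u+\rho_\Theta(w,u)\delta(\mathfrak X)v+\rho_\Theta(u,v)\delta(\mathfrak X)w$ is precisely the displayed closedness condition for a $1$-cochain. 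What your approach buys is brevity and robustness against computational slips, and the key identity $P\big([Y,\iota(v),\iota(w')]_\Theta\big)=\rho_\Theta(v,w')(PY)$ is reusable (it essentially re-derives, for free, why $\rho_\Theta$ is a representation); what the paper's computation buys is self-containedness, since it never has to set up the graph formalism and exhibits explicitly which axiom kills which group of terms. The reduction to decomposable $\mathfrak X=x_1\wedge x_2$ is legitimate since both $\delta(\mathfrak X)$ and $\partial_\Theta$ are linear in $\mathfrak X$.
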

\begin{proof} For any $u,v,w\in V$, we have
\begin{align*}
&\quad\;(\partial_{\Theta}\delta(\mathfrak{X}))(u,v,w)\\
&=[Tu,Tv,\delta(\mathfrak{X})(w)]_\mathfrak{g}+[\delta(\mathfrak{X})(u),Tv,Tw]_\mathfrak{g}+[Tu,\delta(\mathfrak{X})(v),Tw]_\mathfrak{g}\\
&-\delta(\mathfrak{X})\big(\rho(Tu,Tv)w+\rho(Tv,Tw)u+\rho(Tw,Tu)v+\Theta(Tu,Tv,Tw)\big)\\
&-T\big(\rho(Tv,\delta(\mathfrak{X})(w))u+\rho(\delta(\mathfrak{X})(w),Tu)v+\Theta(\delta(\mathfrak{X})(w),Tu,Tv)\big)\\
&-T\big(\rho(Tw,\delta(\mathfrak{X})(u))v+\rho(\delta(\mathfrak{X})(u),Tv)w+\Theta(\delta(\mathfrak{X})(u),Tv,Tw)\big)\\
&-T\big(\rho(Tu,\delta(\mathfrak{X})(v))w+\rho(\delta(\mathfrak{X})(v),Tw)u+\Theta(\delta(\mathfrak{X})(v),Tw,Tu)\big)\\
&=[Tu,Tv,T\rho(\mathfrak{X})w]_\mathfrak{g}+[Tu,Tv,T\Theta(\mathfrak{X},w)]_\mathfrak{g}-[Tu,Tv,[\mathfrak{X},Tw]_\mathfrak{g}]_\mathfrak{g}\\
&+[T\rho(\mathfrak{X})u,Tv,Tw]_\mathfrak{g}+[T\Theta(\mathfrak{X},Tu),Tv,Tw]_\mathfrak{g}-[[\mathfrak{X},Tu]_\mathfrak{g},Tv,Tw]_\mathfrak{g}\\
&+[Tu,T\rho(\mathfrak{X})v,Tw]_\mathfrak{g}+[Tu,T\Theta(\mathfrak{X},Tv),Tw]_\mathfrak{g}-[Tu,[\mathfrak{X},Tv]_\mathfrak{g},Tw]_\mathfrak{g}\\
&-T\rho(\mathfrak{X})\big(\rho(Tu,Tv)w+\rho(Tv,Tw)u+\rho(Tw,Tu)v+\Theta(Tu,Tv,Tw)\big)\\
&-T\Theta(\mathfrak{X},[Tu,Tv,Tw]_\mathfrak{g})+[\mathfrak{X},[Tu,Tv,Tw]_\mathfrak{g}]_\mathfrak{g}\\
&-T\big(\rho(Tv,T\rho(\mathfrak{X})w)u+\rho(Tv,T\Theta(\mathfrak{X},Tw))u-\rho(Tv,[\mathfrak{X},Tw]_\mathfrak{g})u\\
&+\rho(T\rho(\mathfrak{X})w,Tu)v+\rho(T\Theta(\mathfrak{X},Tw),Tu)v-\rho([\mathfrak{X},Tw]_\mathfrak{g},Tu)v\\
&+\Theta(T\rho(\mathfrak{X})w,Tu,Tv)+\Theta(T\Theta(\mathfrak{X},Tw),Tu,Tv)-\Theta([\mathfrak{X},Tw]_\mathfrak{g},Tu,Tv)\big)\\
&-T\big(\rho(Tw,T\rho(\mathfrak{X})u)v+\rho(Tw,T\Theta(\mathfrak{X},Tu))v-\rho(Tw,[\mathfrak{X},Tu]_\mathfrak{g})v\\
&+\rho(T\rho(\mathfrak{X})u,Tv)w+\rho(T\Theta(\mathfrak{X},Tu),Tv)w-\rho([\mathfrak{X},Tu]_\mathfrak{g},Tv)w\\
&+\Theta(T\rho(\mathfrak{X})u,Tv,Tw)+\Theta(T\Theta(\mathfrak{X},Tu),Tv,Tw)-\Theta([\mathfrak{X},Tu]_\mathfrak{g},Tv,Tw)\big)\\
&-T\big(\rho(Tu,T\rho(\mathfrak{X})v)w+\rho(Tu,T\Theta(\mathfrak{X},Tv))w-\rho(Tu,[\mathfrak{X},Tv]_\mathfrak{g})w\\
&+\rho(T\rho(\mathfrak{X})v,Tw)u+\rho(T\Theta(\mathfrak{X},Tv),Tw)u-\rho([\mathfrak{X},Tv]_\mathfrak{g},Tw)u\\
&+\Theta(T\rho(\mathfrak{X})v,Tw,Tu)+\Theta(T\Theta(\mathfrak{X},Tv),Tw,Tu)-\Theta([\mathfrak{X},Tv]_\mathfrak{g},Tw,Tu)\big)\\
^{\eqref{eq:twisted}+\eqref{eq:de1}+\eqref{eq:rep}}&=-T\big(\Theta(\mathfrak{X},[Tu,Tv,Tw]_\mathfrak{g})+T\rho(\mathfrak{X})\Theta(Tu,Tv,Tw)\\
&-T\rho(Tu,Tv)\Theta(\mathfrak{X},Tw)-T\rho(Tv,Tw)\Theta(\mathfrak{X},Tu)-T\rho(Tw,Tu)\Theta(\mathfrak{X},Tv)\\
&-T\Theta([\mathfrak{X},Tw]_\mathfrak{g},Tu,Tv)-T\Theta([\mathfrak{X},Tu]_\mathfrak{g},Tv,Tw)-T\Theta([\mathfrak{X},Tv]_\mathfrak{g},Tw,Tu)\big)\\
&=-T\big((\partial \Theta)(\mathfrak{X},Tu,Tv,Tw)\big)=0.
\end{align*}
Thus, we deduce that $\partial_{\Theta}\delta(\mathfrak{X})=0$.\end{proof}

Now, we give a cohomology of $\Theta$-twisted $\mathcal{O}$-operators on $3$-Lie algebras.
\begin{defi}
Let $T$ be a $\Theta$-twisted $\mathcal{O}$-operator on a $3$-Lie algebra $(\mathfrak{g}, [\cdot,\cdot, \cdot]_\mathfrak{g})$ with
respect to a representation $(V,\rho)$. Define the set of $n$-cochains by
\begin{equation}
\mathfrak{C}_{\Theta}^{n}(V;\mathfrak{g})=
\begin{cases}
\mathfrak{C}_{3Lie}^{n}(V;\mathfrak{g}),\;n\geq 1,\\
\mathfrak{g}\wedge \mathfrak{g},\quad \quad \quad  n=0.
\end{cases}
\end{equation}
Define $D_\Theta:\mathfrak{C}_{\Theta}^{n}(V;\mathfrak{g})\rightarrow \mathfrak{C}_{\Theta}^{n+1}(V;\mathfrak{g})$ by
\begin{equation}
D_\Theta=
\begin{cases}
\partial_{\Theta},\;n\geq 1,\\
\delta,\;\;\;\; n=0.
\end{cases}
\end{equation}
\end{defi}
Denote the set of $n$-cocycles by $\mathcal{Z}_{\Theta}^{n}(V;\mathfrak{g})$ and the set of $n$-coboundaries by $\mathcal{B}_{\Theta}^{n}(V;\mathfrak{g})$. Denote by
$$\mathcal{H}_{\Theta}^{n}(V;\mathfrak{g})=\mathcal{Z}_{\Theta}^{n}(V;\mathfrak{g})/\mathcal{B}_{\Theta}^{n}(V;\mathfrak{g}),\;n\geq 0$$
the $n$-th cohomology group which will be taken to be the $n$-th cohomology group for the
$\Theta$-twisted $\mathcal{O}$-operator $T$.

\section{Deformations of twisted $\mathcal{O}$-operators}
In this section, we study infinitesimal and formal deformations of a $\Theta$-twisted $\mathcal{O}$-operator.
\subsection{ Infinitesimal deformations}
Let $(\mathfrak{g}, [\cdot,\cdot, \cdot]_\mathfrak{g})$ be a $3$-Lie algebra, $(V,\rho)$ be a representation of $\mathfrak{g}$, and $\Theta\in \mathfrak{C}_{3Lie}^{2}(\mathfrak{g};V)$
be a $2$-cocycle in the Chevalley-Eilenberg cochain complex. Let $T : V \rightarrow \mathfrak{g}$ be a $\Theta$-twisted $\mathcal{O}$-operator.
\begin{defi}
An infinitesimal  deformation of $T$ consists of a parametrized sum $T_t = T + tT_1$, for some $T_1 \in
Hom(V, \mathfrak{g})$ such that $T_t$ is a $\Theta$-twisted $\mathcal{O}$-operator for all values of $t$. In this case, we say that
$T_1$ generates an infinitesimal  deformation of $T$ .
\end{defi}
Suppose $T_1$ generates an infinitesimal  deformation of $T$. Then we have
\begin{align*}
&[T_t(u),T_t(v),T_t(w)]_\mathfrak{g}\\
&=T_t\Big(\rho(T_t(u),T_t(v))w+\rho(T_t(v),T_t(w))u+\rho(T_t(w),T_t(u))v+\Theta(T_t(u),T_t(v),T_t(w))\Big),
\end{align*}
for $u,v,w\in V$. This is equivalent to the following conditions
\begin{align}\label{eq:defo}
&[Tu,Tv,T_1(w)]_\mathfrak{g}+[Tu,T_1(v),Tw]_\mathfrak{g}+[T_1(u),Tv,Tw]_\mathfrak{g}\nonumber\\
&=T\Big(\rho(Tu,T_1(v))w+\rho(T_1(u),Tv)w+\rho(Tv,T_1(w))u+\rho(T_1(v),Tw)u\nonumber\\
&+\rho(Tw,T_1(u))v+\rho(T_1(w),Tu)v+\Theta(Tu,Tv,T_1(w))+\Theta(Tu,T_1(v),Tw)\nonumber\\
&+\Theta(T_1(u),Tv,Tw)\Big)\nonumber\\
&+T_1\Big(\rho(Tu,Tv)w+\rho(Tv,Tw)u+\rho(Tw,Tu)v+\Theta(Tu,Tv,Tw)\Big),
\end{align}
\begin{align}
&[Tu,T_1(v),T_1(w)]_\mathfrak{g}+[T_1(u),Tv,T_1(w)]_\mathfrak{g}+[T_1(u),T_1(v),Tw]_\mathfrak{g}\nonumber\\
&=T\Big(\rho(T_1(u),T_1(v))w+\rho(T_1(v),T_1(w))u+\rho(T_1(w),T_1(u))v\nonumber\\
&+\Theta(Tu,T_1(v),T_1(w))+\Theta(T_1(u),Tv,T_1(w))+\Theta(T_1(u),T_1(v),Tw)\Big)\nonumber\\
&+T_1\Big(\rho(Tu,T_1(v))w+\rho(T_1(u),Tv)w+\rho(Tv,T_1(w))u+\rho(T_1(v),Tw)u\nonumber\\
&+\rho(Tw,T_1(u))v+\rho(T_1(w),Tu)v+\Theta(Tu,Tv,T_1(w))+\Theta(Tu,T_1(v),Tw)\nonumber\\
&+\Theta(T_1(u),Tv,Tw)\Big),
\end{align}
\begin{align}
&[T_1(u),T_1(v),T_1(w)]_\mathfrak{g}=T\Big(\Theta(T_1(u),T_1(v),T_1(w))\Big)\nonumber\\
&+T\Big(\rho(T_1(u),T_1(v))w+\rho(T_1(v),T_1(w))u+\rho(T_1(w),T_1(u))v\nonumber\\
&+\Theta(Tu,T_1(v),T_1(w))+\Theta(T_1(u),Tv,T_1(w))+\Theta(T_1(u),T_1(v),Tw)\Big)
\end{align}
and
\begin{align}
&T_1\Big(\Theta(T_1(u),T_1(v),T_1(w))\Big)=0.
\end{align}
Note that the identity \eqref{eq:defo} implies that $T_1$ is a $1$-cocycle in the cohomology of $T$. Hence, $T_1$ defines a
cohomology class in $\mathcal{H}_{\Theta}^{1}(V;\mathfrak{g})$.
\begin{defi}
Two infinitesimal  deformations $T_t = T + tT_1$ and $T^{'}_t = T + tT^{'}_1$ of a $\Theta$-twisted $\mathcal{O}$-operator $T$ are said to be equivalent if there exists an element $\mathfrak{X}\in \mathfrak{g}\wedge \mathfrak{g}$ such that the pair
\begin{equation}
\Big(\phi_{t}=Id_\mathfrak{g}+t[\mathfrak{X},-]_\mathfrak{g},\;\psi_{t}=Id_V+t\big(\rho(\mathfrak{X})(-)+\Theta(\mathfrak{X},T-)\big)\Big),
\end{equation}
defines a morphism of $\Theta$-twisted $\mathcal{O}$-operators from $T_t$ to $T^{'}_t$.

An infinitesimal deformation $T_t = T + tT_1$ of a $\Theta$-twisted $\mathcal{O}$-operator is said to be
trivial if $T_t$ is equivalent to  $T^{'}_t = T$.
\end{defi}
The condition that $\phi_{t}=Id_\mathfrak{g}+t[\mathfrak{X},-]_\mathfrak{g}$ is a $3$-Lie algebra morphism of $(\mathfrak{g}, [\cdot,\cdot, \cdot]_\mathfrak{g})$ is equivalent
to
\begin{equation}
\begin{cases}
[z_1,[\mathfrak{X},z_2]_\mathfrak{g},[\mathfrak{X},z_3]_\mathfrak{g}]_\mathfrak{g}+[[\mathfrak{X},z_1]_\mathfrak{g},z_2,[\mathfrak{X},z_3]_\mathfrak{g}]_\mathfrak{g}\\
+[[\mathfrak{X},z_1]_\mathfrak{g},[\mathfrak{X},z_2]_\mathfrak{g},z_3]_\mathfrak{g}=0,\\
[[\mathfrak{X},z_1]_\mathfrak{g},[\mathfrak{X},z_2]_\mathfrak{g},[\mathfrak{X},z_3]_\mathfrak{g}]_\mathfrak{g}=0,\;for\;z_1,z_2,z_3\in \mathfrak{g}.
\end{cases}
\end{equation}
The condition $\psi_t(\rho(z_1,z_2)u)=\rho(\phi_t(z_1),\phi_(z_2))\psi_t(u)$ implies that
\begin{equation}
\begin{cases}
\Theta(\mathfrak{X},T\rho(z_1,z_2)u)=\rho(z_1,z_2)\Theta(\mathfrak{X},Tu),\\
\big(\rho(z_1,[\mathfrak{X},z_2]_\mathfrak{g})+\rho([\mathfrak{X},z_1]_\mathfrak{g},z_2)\big)\big(\rho(\mathfrak{X})u+\Theta(\mathfrak{X},Tu)\big)\\
+\rho([\mathfrak{X},z_1]_\mathfrak{g},[\mathfrak{X},z_2]_\mathfrak{g})u=0,\\
\rho([\mathfrak{X},z_1]_\mathfrak{g},[\mathfrak{X},z_2]_\mathfrak{g})\big(\rho(\mathfrak{X})u+\Theta(\mathfrak{X},Tu)\big)=0,
\end{cases}
\end{equation}
Finally, the conditions $\psi_t\circ \Theta=\Theta \circ (\phi_t\otimes \phi_t \otimes \phi_t)$ and $\phi_t\circ T_t=T^{'}_t\circ \psi_t$
are respectively equivalent to
\begin{equation}
\begin{cases}
\rho(\mathfrak{X})\Theta(z_1,z_2,z_3)+\Theta(\mathfrak{X},T\Theta(z_1,z_2,z_3))=\Theta([\mathfrak{X},z_1]_\mathfrak{g},z_2,z_3)\\
+\Theta(z_1,[\mathfrak{X},z_2]_\mathfrak{g},z_3)+\Theta(z_1,z_2,[\mathfrak{X},z_3]_\mathfrak{g}),\\
\Theta(z_1,[\mathfrak{X},z_2]_\mathfrak{g},[\mathfrak{X},z_3]_\mathfrak{g})+\Theta([\mathfrak{X},z_1]_\mathfrak{g},z_2,[\mathfrak{X},z_3]_\mathfrak{g})\\
+\Theta([\mathfrak{X},z_1]_\mathfrak{g},[\mathfrak{X},z_2]_\mathfrak{g},z_3)=0,\\
\Theta([\mathfrak{X},z_1]_\mathfrak{g},[\mathfrak{X},z_2]_\mathfrak{g},[\mathfrak{X},z_3]_\mathfrak{g})=0,
\end{cases}
\end{equation}
\begin{equation}\label{eq:defoo}
\begin{cases}
T_1(u)+[\mathfrak{X},Tu]_\mathfrak{g}=T\big(\rho(\mathfrak{X})u+\Theta(\mathfrak{X},Tu)\big)+T^{'}_1(u),\\
[\mathfrak{X},T_1(u)]_\mathfrak{g}=T^{'}_1\big(\rho(\mathfrak{X})u+\Theta(\mathfrak{X},Tu)\big).
\end{cases}
\end{equation}
Note that the above identities hold for all $\mathfrak{X}\in \mathfrak{g}\wedge \mathfrak{g}$, $z_1,z_2,z_3\in \mathfrak{g}$ and $u\in V$.\\

From the first condition of \eqref{eq:defoo}, we have
\begin{align*}
&T_1(u)-T^{'}_1(u)\\
&=T\big(\rho(\mathfrak{X})u+\Theta(\mathfrak{X},Tu)\big)-[\mathfrak{X},Tu]_\mathfrak{g}=D_{\Theta}(\mathfrak{X})(u).
\end{align*}
Therefore, we get the following theorem.
\begin{thm}
Let $T_t = T + tT_1$ and $T^{'}_t = T + tT^{'}_1$ be two equivalent infinitesimal deformations of a $\Theta$-twisted
$\mathcal{O}$-operator. Then $T_1$ and $T^{'}_1$ defines the same cohomology class in $\mathcal{H}_{\Theta}^{1}(V;\mathfrak{g})$.
\end{thm}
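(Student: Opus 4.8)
The plan is to show that the difference $T_1 - T'_1$ is a coboundary in the cohomology complex of $T$, while each of $T_1$ and $T'_1$ is separately a cocycle; these two facts together force $T_1$ and $T'_1$ into the same class of $\mathcal{H}_{\Theta}^{1}(V;\mathfrak{g})$.

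First I would record that both $T_1$ and $T'_1$ are $1$-cocycles. Indeed, each of $T_t = T + tT_1$ and $T'_t = T + tT'_1$ is by hypothesis a $\Theta$-twisted $\mathcal{O}$-operator for all values of $t$; expanding the defining identity \eqref{eq:twisted} for $T_t$ (resp. $T'_t$) and collecting the coefficient of $t$ yields precisely the condition \eqref{eq:defo}, which as already noted in the text says that $T_1$ (resp. $T'_1$) lies in $\mathcal{Z}_{\Theta}^{1}(V;\mathfrak{g})$. Thus $\partial_{\Theta} T_1 = \partial_{\Theta} T'_1 = 0$.

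Next I would exploit the equivalence hypothesis. By definition there is an element $\mathfrak{X}\in \mathfrak{g}\wedge \mathfrak{g}$ for which the pair $(\phi_t,\psi_t)$ is a morphism of $\Theta$-twisted $\mathcal{O}$-operators from $T_t$ to $T'_t$. The compatibility $\phi_t\circ T_t=T'_t\circ \psi_t$ unpacks, order by order in $t$, into the two equations \eqref{eq:defoo}. Reading off the first of these and rearranging gives, for every $u \in V$,
$$T_1(u) - T'_1(u) = T\big(\rho(\mathfrak{X})u + \Theta(\mathfrak{X},Tu)\big) - [\mathfrak{X},Tu]_\mathfrak{g} = \delta(\mathfrak{X})(u),$$
which is exactly the defining formula for $\delta(\mathfrak{X})$.

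Finally I would identify this difference with a coboundary. Since $\mathfrak{C}_{\Theta}^{0}(V;\mathfrak{g}) = \mathfrak{g}\wedge \mathfrak{g}$ and $D_{\Theta}$ restricts to $\delta$ on $0$-cochains, the element $\delta(\mathfrak{X}) = D_{\Theta}(\mathfrak{X})$ belongs to $\mathcal{B}_{\Theta}^{1}(V;\mathfrak{g})$. Hence $T_1 - T'_1 \in \mathcal{B}_{\Theta}^{1}(V;\mathfrak{g})$ while $T_1, T'_1 \in \mathcal{Z}_{\Theta}^{1}(V;\mathfrak{g})$, so $T_1$ and $T'_1$ represent the same class in $\mathcal{H}_{\Theta}^{1}(V;\mathfrak{g})$. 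I do not expect a genuine obstacle here: the two substantive computations — that a deformation forces the cocycle condition \eqref{eq:defo}, and that the morphism condition forces \eqref{eq:defoo} — have already been carried out before the statement, so the remaining work is only the bookkeeping of recognizing $\delta(\mathfrak{X})$ as the image of a $0$-cochain under $D_{\Theta}$.
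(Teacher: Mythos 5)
Your proposal is correct and follows essentially the same route as the paper: the paper likewise reads off the first equation of \eqref{eq:defoo} from the morphism condition $\phi_t\circ T_t=T'_t\circ\psi_t$, rearranges it to $T_1(u)-T'_1(u)=T\big(\rho(\mathfrak{X})u+\Theta(\mathfrak{X},Tu)\big)-[\mathfrak{X},Tu]_\mathfrak{g}=D_{\Theta}(\mathfrak{X})(u)$, and concludes that the difference is a coboundary, having already observed via \eqref{eq:defo} that the generators are $1$-cocycles. Your write-up is if anything slightly more complete, since you state explicitly that both $T_1$ and $T'_1$ lie in $\mathcal{Z}_{\Theta}^{1}(V;\mathfrak{g})$, a point the paper only records for $T_1$.
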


\subsection{ Formal deformations}
Let $\mathfrak{g}$ be a $3$-Lie algebra, $V$ be a $\mathfrak{g}$-module and $\Theta$ a $2$-cocycle in the Chevalley-Eilenberg cohomology of $\mathfrak{g}$ with coefficients in $V$.
Let $T:V\rightarrow \mathfrak{g}$ be a $\Theta$-twisted $\mathcal{O}$-operator.\\

Let $\mathbb{K}[[t]]$ be the ring of power series in one variable $t$. For any $\mathbb{K}$-linear space $V$, we let $V[[t]]$ denote the
vector space of formal power series in $t$ with coefficients in $V$. If in addition, $(\mathfrak{g}, [\cdot,\cdot, \cdot]_\mathfrak{g})$ is a
$3$-Lie algebra over $\mathbb{K}$, then there is a $3$-Lie algebra structure over the ring $\mathbb{K}[[t]]$ on $\mathfrak{g}[[t]]$ given
by
\begin{equation}\label{eq:power3-lie}
\Big[\sum_{i=0}^{+\infty}x_it^{i},\sum_{j=0}^{+\infty}y_jt^{j},\sum_{k=0}^{+\infty}z_kt^{k}\Big]_\mathfrak{g}=\sum_{s=0}^{+\infty}\sum_{i+j+k=s}[x_i,y_j,z_k]_\mathfrak{g}t^{s},\;\forall x_i,y_j,z_k\in \mathfrak{g}.
\end{equation}
For any representation $(V,\rho)$ of $(\mathfrak{g}, [\cdot,\cdot, \cdot]_\mathfrak{g})$, there is a natural representation of the $3$-Lie algebra
$\mathfrak{g}[[t]]$ on the $\mathbb{K}[[t]]$-module $V[[t]]$, which is given by
\begin{equation}
\rho\Big(\sum_{i=0}^{+\infty}x_it^{i},\sum_{j=0}^{+\infty}y_jt^{j}\Big)\Big(\sum_{k=0}^{+\infty}V_kt^{k}\Big)=\sum_{s=0}^{+\infty}\sum_{i+j+k=s}\rho(x_i,y_j)v_kt^{s},\;\forall x_i,y_j\in \mathfrak{g},\;v_k\in V.
\end{equation}
Similarly, the $2$-cocycle $\Theta$ can be extended to a $2$-cocycle ( denoted by the same notation $\Theta$) on the $3$-Lie algebra $\mathfrak{g}[[t]]$ with
coefficients in $V[[t]]$. Consider a power series
\begin{equation}\label{eq:formal1}
T_t=\sum_{i=0}^{+\infty}T_it^{i},\;T_i\in Hom_{\mathbb{K}}(V;\mathfrak{g}),
\end{equation}
that is, $T_t\in Hom_{\mathbb{K}}(V;\mathfrak{g})[[t]]=Hom_{\mathbb{K}}(V;\mathfrak{g}[[t]])$. Extend it to be a $\mathbb{K}[[t]]$-module map from
$V[[t]]$ to $\mathfrak{g}[[t]]$ which is still denoted by $T_t$.
\begin{defi}
If $T_t=\displaystyle\sum_{i=0}^{+\infty}T_it^{i}$ with $T_0=T$ satisfies
\begin{align}\label{eq:formal2}
&[T_t(u),T_t(v),T_t(w)]_\mathfrak{g}\nonumber\\
&=T_t\Big(\rho(T_t(u),T_t(v))w+\rho(T_t(v),T_t(w))u+\rho(T_t(w),T_t(u))v+\Theta(T_t(u),T_t(v),T_t(w))\Big),
\end{align}
we say that $T_t$ is a formal deformation of the $\Theta$-twisted $\mathcal{O}$-operator $T$.
\end{defi}
Recall that a formal deformation of a $3$-Lie algebra $(\mathfrak{g}, [\cdot,\cdot, \cdot]_\mathfrak{g})$ is a formal power series $\omega_t=\displaystyle\sum_{k=0}^{+\infty}\omega_kt^{k}$
where $\omega_k\in Hom(\wedge^{3}\mathfrak{g}; \mathfrak{g})$ such that $\omega_0(x, y, z) = [x, y, z]_\mathfrak{g}$ for any $x, y, z \in \mathfrak{g}$ and $\omega_t$ defines a $3$-Lie
algebra structure over the ring $\mathbb{K}[[t]]$ on $\mathfrak{g}[[t]]$.\\
Based on the relationship between $\Theta$-twisted $\mathcal{O}$-operators and $3$-Lie algebras, we have
\begin{pro}
Let  $T_t=\displaystyle\sum_{i=0}^{+\infty}T_it^{i}$ be a formal deformation of a $\Theta$-twisted $\mathcal{O}$-operator $T$ on the $3$-Lie algebra
$(\mathfrak{g}, [\cdot,\cdot, \cdot]_\mathfrak{g})$ with respect to a representation $(V,\rho)$. Then $[\cdot,\cdot,\cdot]_{T_t}$ defined by
\begin{align*}
\quad\;[u,v,w]_{T_t}=&\displaystyle\sum_{s=0}^{+\infty}\Big(\sum_{i+j=s}\Big(\rho(T_i(u),T_j(v))w+\rho(T_i(v),T_j(w))u+\rho(T_i(w),T_j(u))v\Big)\\
&+\sum_{i+j+k=s}\Theta(T_i(u),T_j(v),T_k(w))\Big)t^{s}
\end{align*} for all $u,v,w\in V$,
is a formal deformation of the associated $3$-Lie algebra $(V,[\cdot,\cdot,\cdot]_T)$ given by \eqref{eq:cro1}.
\end{pro}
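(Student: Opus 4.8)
The plan is to recognize that the defining equation \eqref{eq:formal2} of a formal deformation of $T$ is precisely the statement that $T_t$ is a $\Theta$-twisted $\mathcal{O}$-operator over the ground ring $\mathbb{K}[[t]]$, and then to invoke the induced-bracket construction of \eqref{eq:cro1} verbatim at the level of the $\mathbb{K}[[t]]$-module $V[[t]]$. In this way the Filippov--Jacobi identity for $[\cdot,\cdot,\cdot]_{T_t}$ is obtained for free, and the only remaining task is bookkeeping.

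First I would set up the algebraic framework over $\mathbb{K}[[t]]$ as recalled just before the statement: the bracket on $\mathfrak{g}$ extends by \eqref{eq:power3-lie} to a $\mathbb{K}[[t]]$-linear $3$-Lie bracket on $\mathfrak{g}[[t]]$, the representation $\rho$ extends to a representation of $\mathfrak{g}[[t]]$ on $V[[t]]$, and $\Theta$ extends to a $2$-cocycle on $\mathfrak{g}[[t]]$ with coefficients in $V[[t]]$. In this enlarged setting $T_t\colon V[[t]]\rightarrow \mathfrak{g}[[t]]$ is $\mathbb{K}[[t]]$-linear, and \eqref{eq:formal2} says exactly that $T_t$ satisfies \eqref{eq:twisted} over $\mathbb{K}[[t]]$. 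Hence $T_t$ is a $\Theta$-twisted $\mathcal{O}$-operator in the category of $\mathbb{K}[[t]]$-modules.

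Next I would apply the construction following Proposition \ref{graph}. Since every $\Theta$-twisted $\mathcal{O}$-operator induces a $3$-Lie algebra structure on its source via \eqref{eq:cro1}, the bracket
\[
[u,v,w]_{T_t}=\rho(T_t(u),T_t(v))w+\rho(T_t(v),T_t(w))u+\rho(T_t(w),T_t(u))v+\Theta(T_t(u),T_t(v),T_t(w))
\]
is automatically a $\mathbb{K}[[t]]$-linear $3$-Lie bracket on $V[[t]]$; in particular it satisfies the Filippov--Jacobi identity \eqref{eq:de1}. No direct verification of the identity is therefore needed, as the substantive work was already carried out in establishing \eqref{eq:cro1}.

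Finally I would collect terms by powers of $t$. Expanding $T_t=\sum_i T_i t^{i}$ and using the $\mathbb{K}[[t]]$-bilinearity of $\rho$ and trilinearity of $\Theta$, the $\rho$-terms produce the convolution sum $\sum_{i+j=s}$ and the $\Theta$-term produces $\sum_{i+j+k=s}$, which reproduces exactly the stated formula for $[\cdot,\cdot,\cdot]_{T_t}$. The coefficient of $t^{0}$ comes from $i=j=k=0$, and since $T_0=T$ it equals $[u,v,w]_T$ from \eqref{eq:cro1}. Thus $[\cdot,\cdot,\cdot]_{T_t}$ is a formal power series of $3$-linear maps with leading term $[\cdot,\cdot,\cdot]_T$ defining a $3$-Lie structure over $\mathbb{K}[[t]]$, i.e. a formal deformation of $(V,[\cdot,\cdot,\cdot]_T)$. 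The main (and only) obstacle is the bookkeeping: checking that the extended $\rho$ and $\Theta$ are genuinely a representation and a $2$-cocycle over $\mathbb{K}[[t]]$, so that the induced-bracket construction legitimately applies, and that the term-by-term expansion matches the proposition's double and triple convolution sums. Both points are routine and present no essential difficulty.
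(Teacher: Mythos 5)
Your proposal is correct and follows exactly the route the paper intends: the paper states this proposition without a written proof, relying precisely on the observation that \eqref{eq:formal2} makes $T_t$ a $\Theta$-twisted $\mathcal{O}$-operator over $\mathbb{K}[[t]]$, so that the induced bracket \eqref{eq:cro1} applies verbatim and the coefficient of $t^0$ recovers $[\cdot,\cdot,\cdot]_T$. Your filling in of the convolution bookkeeping is consistent with the extensions \eqref{eq:power3-lie}--\eqref{eq:formal1} set up just before the statement.
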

By applying Eqs.\eqref{eq:power3-lie}-\eqref{eq:formal1} to expand Eq.\eqref{eq:formal2} and collecting coefficients of $t^{s}$,
we see that Eq.\eqref{eq:formal2} is equivalent to the system of equations
\begin{align}\label{eq:system}
&\sum_{i+j+k=s}[T_i(u),T_j(v),T_k(w)]_\mathfrak{g}\nonumber\\
&=\sum_{i+j+k=s}T_i\Big(\rho(T_j(u),T_k(v))w+\rho(T_j(v),T_k(w))u+\rho(T_j(w),T_k(u))v\Big)\nonumber\\
&+\sum_{i+j+k+m=s}T_i\Big(\Theta(T_j(u),T_k(v),T_m(w))\Big).
\end{align}
Note that \eqref{eq:system} holds for $n=0$ as $T_0=T$ is a $\Theta$-twisted $\mathcal{O}$-operator. For $n=1$, we get
\begin{align*}
&[Tu,Tv,T_1(w)]_\mathfrak{g}+[Tu,T_1(v),Tw]_\mathfrak{g}+[T_1(u),Tv,Tw]_\mathfrak{g}\\
&=T\Big(\rho(Tu,T_1(v))w+\rho(T_1(u),Tv)w+\rho(Tv,T_1(w))u+\rho(T_1(v),Tw)u\\
&+\rho(Tw,T_1(u))v+\rho(T_1(w),Tu)v+\Theta(Tu,Tv,T_1(w))+\Theta(Tu,T_1(v),Tw)\\
&+\Theta(T_1(u),Tv,Tw)\Big)\\
&+T_1\Big(\rho(Tu,Tv)w+\rho(Tv,Tw)u+\rho(Tw,Tu)v+\Theta(Tu,Tv,Tw)\Big),
\end{align*}
This implies that $(D_{\Theta} (T_1))(u, v,w) = 0$. Hence the linear term $T_1$ is a $1$-cocycle in the cohomology of $T$, called
the infinitesimal of the deformation $T_t$.\\

In the sequel, we discuss equivalent formal deformations.
\begin{defi}
Let $T_t=\displaystyle\sum_{i=0}^{+\infty}T_it^{i}$ and $T^{'}_t=\displaystyle\sum_{i=0}^{+\infty}T{'}_it^{i}$ be two formal deformations of a $\Theta$-twisted $\mathcal{O}$-operator $T=T_0=T^{'}_0$ on a $3$-Lie algebra $\mathfrak{g}$ with respect to a representation $(V,\rho)$. They are said to be equivalent if there exists
an element $\mathfrak{X}\in \mathfrak{g}\wedge \mathfrak{g}$, $\phi_i\in gl(\mathfrak{g})$ and $\psi_i\in gl(V)$, $i\geq2$, such that the pair
\begin{equation}\label{eq:equi}
\Big(\phi_t=Id_\mathfrak{g}+t[\mathfrak{X},-]_\mathfrak{g}+\sum_{i=2}^{+\infty}\phi_it^{i},\;\psi_t=Id_V+t\big(\rho(\mathfrak{X})(-)+\Theta(\mathfrak{X},T-)\big)+\sum_{i=2}^{+\infty}\psi_it^{i}\Big),
\end{equation}
is a morphism of a $\Theta$-twisted $\mathcal{O}$-operators from $T_t$ to $T^{'}_t$.

A $\Theta$-twisted $\mathcal{O}$-operator $T$ is said to be rigid if any
formal deformation of $T$ is equivalent to the undeformed deformation $T^{'}_t = T$ .
\end{defi}
\begin{thm}
If two formal deformations of a $\Theta$-twisted $\mathcal{O}$-operator $T$ on a $3$-Lie algebra
$(\mathfrak{g}, [\cdot,\cdot, \cdot]_\mathfrak{g})$ with respect to a representation $(V,\rho)$ are equivalent, then their infinitesimals
are in the same cohomology class.
\end{thm}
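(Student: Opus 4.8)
The plan is to reduce the statement to the infinitesimal case by extracting only the linear coefficient of the morphism condition. By the definition of equivalent formal deformations \eqref{eq:equi}, there exist an element $\mathfrak{X}\in\mathfrak{g}\wedge\mathfrak{g}$ together with higher-order maps $\phi_i\in gl(\mathfrak{g})$ and $\psi_i\in gl(V)$ for $i\ge 2$ such that the pair $(\phi_t,\psi_t)$ is a morphism of $\Theta$-twisted $\mathcal{O}$-operators from $T_t$ to $T'_t$. In particular the compatibility $\phi_t\circ T_t=T'_t\circ\psi_t$ holds as an identity of formal power series in $t$.

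First I would expand both sides of $\phi_t\circ T_t=T'_t\circ\psi_t$ and isolate the coefficient of $t^1$. Since $T_0=T'_0=T$, the constant terms coincide, and because the corrections $\phi_i,\psi_i$ with $i\ge 2$ contribute only at order $t^2$ and higher, the linear coefficient involves solely $T$, $T_1$, $T'_1$, and the leading pieces $[\mathfrak{X},-]_\mathfrak{g}$ and $\rho(\mathfrak{X})(-)+\Theta(\mathfrak{X},T-)$ of $\phi_t$ and $\psi_t$. This reproduces exactly the first relation of \eqref{eq:defoo}, namely
\[
T_1(u)+[\mathfrak{X},Tu]_\mathfrak{g}=T\big(\rho(\mathfrak{X})u+\Theta(\mathfrak{X},Tu)\big)+T'_1(u),\qquad\forall u\in V.
\]

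Next I would rearrange this identity into
\[
(T_1-T'_1)(u)=T\big(\rho(\mathfrak{X})u+\Theta(\mathfrak{X},Tu)\big)-[\mathfrak{X},Tu]_\mathfrak{g}=\delta(\mathfrak{X})(u),
\]
recognizing the right-hand side as the value of the degree-zero differential $\delta=D_\Theta$ on $\mathfrak{X}\in\mathfrak{g}\wedge\mathfrak{g}=\mathfrak{C}_\Theta^0(V;\mathfrak{g})$. Hence $T_1-T'_1=D_\Theta(\mathfrak{X})$ is a $1$-coboundary. Since each infinitesimal $T_1$ and $T'_1$ is already a $1$-cocycle (as obtained by collecting the order-$t^1$ term of the deformation equation \eqref{eq:formal2}), the two infinitesimals determine the same class in $\mathcal{H}_\Theta^1(V;\mathfrak{g})$.

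I expect no genuine obstacle here: the entire content is the observation that only the linear coefficient of the morphism relation matters and that this coefficient is precisely the coboundary $\delta(\mathfrak{X})$. The higher-order data $\phi_i,\psi_i$ and the remaining morphism axioms (the $3$-Lie morphism constraint on $\phi_t$, together with the compatibilities $\psi_t\rho=\rho\,\phi_t$ and $\psi_t\circ\Theta=\Theta\circ\phi_t^{\otimes 3}$) are irrelevant to the cohomology class of the infinitesimal and can be disregarded for this statement.
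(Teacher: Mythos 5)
Your proposal is correct and follows essentially the same route as the paper: extract the coefficient of $t^{1}$ from $\phi_t\circ T_t=T'_t\circ\psi_t$ (the higher-order corrections $\phi_i,\psi_i$ being irrelevant at that order) and identify $T_1-T'_1$ with $\delta(\mathfrak{X})=D_\Theta(\mathfrak{X})$, hence a $1$-coboundary. Your write-up is in fact slightly more careful about the signs than the paper's displayed computation, which agrees with the first relation of \eqref{eq:defoo} only after correcting a sign on the $[\mathfrak{X},Tu]_\mathfrak{g}$ term.
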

\begin{proof} Let $(\phi_t,\psi_t)$ be the two maps defined by Eq.\eqref{eq:equi} which gives an equivalence between
two deformations $T_t=\displaystyle\sum_{i=0}^{+\infty}T_it^{i}$ and $T^{'}_t=\displaystyle\sum_{i=0}^{+\infty}T{'}_it^{i}$ of a $\Theta$-twisted $\mathcal{O}$-operator $T$. By
$(\phi_t \circ T_t)(u)=(T^{'}_t\circ \psi_t)(u)$, we have
\begin{align*}
&T_1(u)=T^{'}_1(u)+T(\rho(\mathfrak{X})u+\Theta(\mathfrak{X},Tu))+[\mathfrak{X},Tu]_\mathfrak{g}\\
&=T^{'}_1(u)+(D_{\Theta}(\mathfrak{X}))(u),\;\forall u\in V,
\end{align*}
which implies that $T_1$ and $T^{'}_1$ are in the same cohomology class.\end{proof}

\section{$3$-NS-Lie algebras}
In this section, we introduce the notion of $3$-NS-Lie algebra as the underlying structure of twisted
$\mathcal{O}$-operators. It is a natural generalization of NS-Lie algebras introduced in \cite{Das1}. Here we study some properties of $3$-NS-Lie algebras.
\begin{defi}
A $3$-NS-Lie algebra is a vector space $A$ together with trilinear operations $\{\cdot,\cdot,\cdot\},[\![\cdot,\cdot,\cdot]\!]:\otimes^{3}A\rightarrow A$ satisfying
the following identities
\begin{equation}\label{eq:Ns}
\{x_1,x_2,x_3\}=-\{x_2,x_1,x_3\},
\end{equation}
\begin{equation}\label{eq:Nss}
[\![x_{\sigma(1)},x_{\sigma(2)},x_{\sigma(3)}]\!]=\epsilon(\sigma)[\![x_1,x_2,x_3]\!],\;\forall \sigma \in S_3,
\end{equation}
\begin{align}\label{eq:NS1}
&\quad\;\{x_1,x_2,\{x_3,x_4,x_5\}\}=\{\{x_1,x_2,x_3\}^C,x_4,x_5\}+\{[\![x_1,x_2,x_3]\!],x_4,x_5\}\nonumber\\
&+\{x_3,\{x_1,x_2,x_4\}^C,x_5\}+\{x_3,[\![x_1,x_2,x_4]\!],x_5\}+\{x_3,x_4,\{x_1,x_2,x_5\}\},
\end{align}
\begin{align}\label{eq:NS2}
&\quad\;\{\{x_1,x_2,x_3\}^C,x_4,x_5\}+\{[\![x_1,x_2,x_3]\!],x_4,x_5\}\nonumber\\
&=\{x_1,x_2,\{x_3,x_4,x_5\}\}+\{x_2,x_3,\{x_1,x_4,x_5\}\}+\{x_3,x_1,\{x_2,x_4,x_5\}\},
\end{align}
\begin{align}\label{eq:NS3}
&\quad\;[\![x_1,x_2,[x_3,x_4,x_5]_{*}]\!]+\{x_1,x_2,[\![x_3,x_4,x_5]\!]\}\nonumber\\
&=[\![[x_1,x_2,x_3]_{*},x_4,x_5]\!]+[\![x_3,[x_1,x_2,x_4]_{*},x_5]\!]+[\![x_3,x_4,[x_1,x_2,x_5]_{*}]\!]\nonumber\\
&+\{x_4,x_5,[\![x_1,x_2,x_3]\!]\}+\{x_5,x_3,[\![x_1,x_2,x_4]\!]\}+\{x_3,x_4,[\![x_1,x_2,x_5]\!]\},
\end{align}
for all $x_i\in A,\;1\leq i\leq 5$.\\ Here $\{x_1,x_2,x_3\}^C=\circlearrowleft_{x_1,x_2,x_3}\{x_1,x_2,x_3\}$ and
$[x_1,x_2,x_3]_{*}=\{x_1,x_2,x_3\}^C+[\![x_1,x_2,x_3]\!]$.
\end{defi}

\begin{rmk}
If the trilinear operation $\{\cdot,\cdot,\cdot\}$ in the above definition is trivial, one gets that $(A,[\![\cdot,\cdot,\cdot]\!])$ is a
$3$-Lie algebra. On the other hand, if $[\![\cdot,\cdot,\cdot]\!]$ is trivial then $(A,\{\cdot,\cdot,\cdot\})$ becomes a $3$-pre-Lie algebra. Thus, $3$-NS-Lie algebras
are a generalization of both $3$-Lie algebras and $3$-pre-Lie algebras (For more details about $3$-pre-Lie algebras, see \cite{BGS}).\\
In the following, we show that $3$-NS-Lie algebras split $3$-Lie algebras.
\end{rmk}

\begin{pro}
Let $(A,\{\cdot,\cdot,\cdot\},[\![\cdot,\cdot,\cdot]\!])$ be a $3$-NS-Lie algebra. Then   $(A,[\cdot,\cdot,\cdot]_*)$
is a $3$-Lie algebra.
\end{pro}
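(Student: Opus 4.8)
The plan is to verify the two defining properties of a $3$-Lie algebra for $(A,[\cdot,\cdot,\cdot]_*)$: that $[\cdot,\cdot,\cdot]_*$ is totally skew-symmetric, and that it satisfies the Filippov--Jacobi identity \eqref{eq:de1}. Skew-symmetry is settled first. Writing $[x_1,x_2,x_3]_*=\{x_1,x_2,x_3\}^C+[\![x_1,x_2,x_3]\!]$ with $\{x_1,x_2,x_3\}^C=\{x_1,x_2,x_3\}+\{x_2,x_3,x_1\}+\{x_3,x_1,x_2\}$, the summand $[\![\cdot,\cdot,\cdot]\!]$ is already totally skew-symmetric by \eqref{eq:Nss}, so it suffices to treat the cyclic sum. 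It is manifestly invariant under the cyclic permutation $(1\,2\,3)$, and applying \eqref{eq:Ns} to each of its three summands shows that the transposition $(1\,2)$ sends it to its negative. As $(1\,2)$ and $(1\,2\,3)$ generate $S_3$ and $\{\cdot,\cdot,\cdot\}^C$ transforms under each generator by the corresponding sign, it carries the sign representation; hence $[x_{\sigma(1)},x_{\sigma(2)},x_{\sigma(3)}]_*=\epsilon(\sigma)[x_1,x_2,x_3]_*$ for every $\sigma\in S_3$.

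For the Filippov--Jacobi identity I would study
\[
J:=[x_1,x_2,[x_3,x_4,x_5]_*]_*-[[x_1,x_2,x_3]_*,x_4,x_5]_*-[x_3,[x_1,x_2,x_4]_*,x_5]_*-[x_3,x_4,[x_1,x_2,x_5]_*]_*,
\]
whose vanishing is exactly \eqref{eq:de1} (equivalently, the assertion that $[x_1,x_2,-]_*$ is a derivation of $[\cdot,\cdot,\cdot]_*$). Splitting only the outermost bracket of each of the four terms via $[\cdot,\cdot,\cdot]_*=\{\cdot,\cdot,\cdot\}^C+[\![\cdot,\cdot,\cdot]\!]$ writes $J=J'+J''$, where $J'$ collects the four contributions whose outer operation is $[\![\cdot,\cdot,\cdot]\!]$ (inner brackets left intact as $[\cdot,\cdot,\cdot]_*$) and $J''$ those whose outer operation is $\{\cdot,\cdot,\cdot\}^C$. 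Then
\[
J'=[\![x_1,x_2,[x_3,x_4,x_5]_*]\!]-[\![[x_1,x_2,x_3]_*,x_4,x_5]\!]-[\![x_3,[x_1,x_2,x_4]_*,x_5]\!]-[\![x_3,x_4,[x_1,x_2,x_5]_*]\!],
\]
which is precisely the combination appearing in \eqref{eq:NS3}; solving \eqref{eq:NS3} for it gives
\[
J'=-\{x_1,x_2,[\![x_3,x_4,x_5]\!]\}+\{x_4,x_5,[\![x_1,x_2,x_3]\!]\}+\{x_5,x_3,[\![x_1,x_2,x_4]\!]\}+\{x_3,x_4,[\![x_1,x_2,x_5]\!]\}=:R.
\]
It then remains to prove $J''=-R$.

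The block $J''$ is where the real work lies. I would expand each outer $\{\cdot,\cdot,\cdot\}^C$ into its three cyclic $\{\cdot,\cdot,\cdot\}$-summands and sort by the slot occupied by the inner bracket: in the terms whose inner bracket sits in the third slot I expand $[\cdot,\cdot,\cdot]_*=\{\cdot,\cdot,\cdot\}^C+[\![\cdot,\cdot,\cdot]\!]$, producing raw $\{x_i,x_j,\{\cdots\}\}$-terms to which \eqref{eq:NS1} applies together with residual $\{x_i,x_j,[\![\cdots]\!]\}$-terms; in the terms whose inner bracket sits in the first slot I keep $[\cdot,\cdot,\cdot]_*$ and apply \eqref{eq:NS2}, using \eqref{eq:Ns} beforehand to move any inner bracket out of the second slot into the first. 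The $[\![\cdot,\cdot,\cdot]\!]$-free parts of \eqref{eq:NS1} and \eqref{eq:NS2} are exactly the two axioms of a $3$-pre-Lie algebra (cf. the Remark above), and they make all $[\![\cdot,\cdot,\cdot]\!]$-free terms of $J''$ cancel in the same manner as the skew-symmetrization of a $3$-pre-Lie algebra satisfies \eqref{eq:de1}; the very same applications generate the $[\![\cdot,\cdot,\cdot]\!]$-carrying terms which, together with the $\{x_i,x_j,[\![\cdots]\!]\}$-terms already present, assemble into precisely $-R$, so that $J=R+(-R)=0$.

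The main obstacle is the bookkeeping in this last step. There is no grading separating $\{\cdot,\cdot,\cdot\}$ from $[\![\cdot,\cdot,\cdot]\!]$: the identities \eqref{eq:NS1} and \eqref{eq:NS2} are inhomogeneous in the number of factors of $[\![\cdot,\cdot,\cdot]\!]$, so the $[\![\cdot,\cdot,\cdot]\!]$-free cancellation and the assembly of the $[\![\cdot,\cdot,\cdot]\!]$-terms are forced by the same instances of the identities and must be reconciled simultaneously rather than sector by sector. Since each nested $\{\cdot,\cdot,\cdot\}^C$ expands into nine raw summands, a disciplined accounting is required, repeatedly invoking the skew-symmetry established in the first step to recognize terms that differ only by a permutation of their arguments, in order to confirm that $J''$ collapses to $-R$ and hence that $J=0$.
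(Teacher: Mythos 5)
Your proof follows essentially the same route as the paper's: skew-symmetry from \eqref{eq:Ns}--\eqref{eq:Nss}, then the Filippov--Jacobi expression split so that \eqref{eq:NS3} absorbs the four terms whose outer bracket is $[\![\cdot,\cdot,\cdot]\!]$, while three instances of \eqref{eq:NS1} together with two instances of \eqref{eq:NS2} (after repositioning arguments via skew-symmetry) absorb the rest --- exactly the six identities the paper lists at the end of its computation. The only difference is that the paper carries out the final term-by-term cancellation explicitly, whereas you defer that bookkeeping; your plan is sound and the cancellation does work out as you predict, with the residual $\{x_i,x_j,[\![\cdot,\cdot,\cdot]\!]\}$-type terms assembling into $-R$ as claimed.
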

\begin{proof} By Eqs.\eqref{eq:Ns} and \eqref{eq:Nss}, then $[\cdot, \cdot, \cdot]_*$ is skew-symmetric.\\
For any $x_i\in A,\;1\leq i\leq 5$, we have
\begin{align*}
&[x_1,x_2,[x_3,x_4,x_5]_*]_*-[[x_1,x_2,x_3]_*,x_4,x_5]_*-[x_3,[x_1,x_2,x_4]_*,x_5]_*-[x_3,x_4,[x_1,x_2,x_5]_*]_*\\
&=\{x_1,x_2,\{x_3,x_4,x_5\}^{C}\}^{C}-\{\{x_1,x_2,x_3\}^{C},x_4,x_5\}^{C}-\{x_3,\{x_1,x_2,x_4\}^{C},x_5\}^{C}\\
&-\{x_3,x_4,\{x_1,x_2,x_5\}^{C}\}^{C}+\{x_1,x_2,[\![x_3,x_4,x_5]\!]\}+\{x_2,[\![x_3,x_4,x_5]\!],x_1\}\\
&+\{[\![x_3,x_4,x_5]\!],x_1,x_2\}+[\![x_1,x_2,[x_3,x_4,x_5]_*]\!]-\{[\![x_1,x_2,x_3]\!],x_4,x_5\}\\
&-\{x_4,x_5,[\![x_1,x_2,x_3]\!]\}-\{x_5,[\![x_1,x_2,x_3]\!],x_4\}-[\![[x_1,x_2,x_3]_*,x_4,x_5]\!]\\
&-\{x_3,[\![x_1,x_2,x_4]\!],x_5\}-\{[\![x_1,x_2,x_4]\!],x_5,x_3\}-\{x_5,x_3,[\![x_1,x_2,x_4]\!]\}\\
&-[\![x_3,[x_1,x_2,x_4]_*,x_5]\!]-\{x_3,x_4,[\![x_1,x_2,x_5]\!]\}-\{x_4,[\![x_1,x_2,x_5]\!],x_3\}\\
&-\{[\![x_1,x_2,x_5]\!],x_3,x_4\}-[\![x_3,x_4,[x_1,x_2,x_5]_*]\!]\\
&=\{x_1,x_2,\{x_3,x_4,x_5\}\} + \{x_1,x_2,\{x_4,x_5,x_3\}\} + \{x_1,x_2,\{x_5,x_3,x_4\}\}\\
&+\{x_2,\{x_3,x_4,x_5\}^{C},x_1\} + \{\{x_3,x_4,x_5\}^{C},x_1,x_2\}\\
&-\{x_4,x_5,\{x_1,x_2,x_3\}\} - \{x_4,x_5,\{x_2,x_3,x_1\}\} - \{x_4,x_5,\{x_3,x_1,x_2\}\}\\
&-\{\{x_1,x_2,x_3\}^{C},x_4,x_5\} - \{x_5,\{x_1,x_2,x_3\}^{C},x_4\}\\
&-\{x_5,x_3,\{x_1,x_2,x_4\}\} - \{x_5,x_3,\{x_2,x_4,x_1\}\} - \{x_5,x_3,\{x_4,x_1,x_2\}\}\\
&-\{x_3,\{x_1,x_2,x_4\}^{C},x_5\} - \{\{x_1,x_2,x_4\}^{C},x_5,x_3\}\\
&-\{x_3,x_4,\{x_1,x_2,x_5\}\} - \{x_3,x_4,\{x_2,x_5,x_1\}\} - \{x_3,x_4,\{x_5,x_1,x_2\}\}\\
&-\{x_4,\{x_1,x_2,x_5\}^{C},x_3\} - \{\{x_1,x_2,x_5\}^{C},x_3,x_4\}\\
&+\{x_1,x_2,[\![x_3,x_4,x_5]\!]\}+\{x_2,[\![x_3,x_4,x_5]\!],x_1\}+\{[\![x_3,x_4,x_5]\!],x_1,x_2\}\\
&+[\![x_1,x_2,[x_3,x_4,x_5]_*]\!]-\{[\![x_1,x_2,x_3]\!],x_4,x_5\}-\{x_4,x_5,[\![x_1,x_2,x_3]\!]\}\\
&-\{x_5,[\![x_1,x_2,x_3]\!],x_4\}-[\![[x_1,x_2,x_3]_*,x_4,x_5]\!]-\{x_3,[\![x_1,x_2,x_4]\!],x_5\}\\
&-\{[\![x_1,x_2,x_4]\!],x_5,x_3\}-\{x_5,x_3,[\![x_1,x_2,x_4]\!]\}-[\![x_3,[x_1,x_2,x_4]_*,x_5]\!]\\
&-\{x_3,x_4,[\![x_1,x_2,x_5]\!]\}-\{x_4,[\![x_1,x_2,x_5]\!],x_3\}-\{[\![x_1,x_2,x_5]\!],x_3,x_4\}\\
&-[\![x_3,x_4,[x_1,x_2,x_5]_*]\!]=0.
\end{align*}
This is because
\begin{align*}
&\{x_1,x_2,\{x_3,x_4,x_5\}\} = \{\{x_1,x_2,x_3\}^{C},x_4,x_5\} + \{x_3,\{x_1,x_2,x_4\}^{C},x_5\}\\
&\quad\quad\quad\quad + \{x_3,x_4,\{x_1,x_2,x_5\}\}+\{[\![x_1,x_2,x_3]\!],x_4,x_5\}+\{x_3,[\![x_1,x_2,x_4]\!],x_5\},\\
&\{x_1,x_2,\{x_4,x_5,x_3\}\} = \{\{x_1,x_2,x_4\}^{C},x_5,x_3\} + \{x_4,\{x_1,x_2,x_5\}^{C},x_3\}\\
& \quad\quad\quad\quad+ \{x_4,x_5,\{x_1,x_2,x_3\}\}+\{[\![x_1,x_2,x_4]\!],x_5,x_3\}+\{x_4,[\![x_1,x_2,x_5]\!],x_3\},\\
&\{x_1,x_2,\{x_5,x_3,x_4\}\} = \{x_5,\{x_1,x_2,x_3\}^{C},x_4\} + \{\{x_1,x_2,x_5\}^{C},x_3,x_4\}\\
& \quad\quad\quad\quad+ \{x_5,x_3,\{x_1,x_2,x_4\}\}+\{x_5,[\![x_1,x_2,x_3]\!],x_4\}+\{[\![x_1,x_2,x_5]\!],x_3,x_4\},\\
&\{x_2,\{x_3,x_4,x_5\}^{C},x_1\}+\{x_2,[\![x_3,x_4,x_5]\!],x_1\} = \{x_4,x_5,\{x_2,x_3,x_1\}\}\\
&\quad\quad\quad\quad+ \{x_5,x_3,\{x_2,x_4,x_1\}\}+ \{x_3,x_4,\{x_2,x_5,x_1\}\},\\
&\{\{x_3,x_4,x_5\}^{C},x_1,x_2\}+\{[\![x_3,x_4,x_5]\!],x_1,x_2\} = \{x_3,x_4,\{x_5,x_1,x_2\}\} \\
&\quad\quad\quad\quad+ \{x_4,x_5,\{x_3,x_1,x_2\}\}+ \{x_5,x_3,\{x_4,x_1,x_2\}\},\\
&[\![x_1,x_2,[x_3,x_4,x_5]_{*}]\!]+\{x_1,x_2,[\![x_3,x_4,x_5]\!]\}\\
&\quad\quad\quad\quad=[\![[x_1,x_2,x_3]_{*},x_4,x_5]\!]+[\![x_3,[x_1,x_2,x_4]_{*},x_5]\!]+[\![x_3,x_4,[x_1,x_2,x_5]_{*}]\!]\\
&\quad\quad\quad\quad+\{x_4,x_5,[\![x_1,x_2,x_3]\!]\}+\{x_5,x_3,[\![x_1,x_2,x_4]\!]\}+\{x_3,x_4,[\![x_1,x_2,x_5]\!]\}.
\end{align*}
Thus the proof is completed.\end{proof}
\noindent The $3$-Lie algebra $(A,[\cdot,\cdot,\cdot]_*)$ of the above proposition is called the subadjacent $3$-Lie algebra of $(A,\{\cdot,\cdot,\cdot \},[\![\cdot,\cdot,\cdot]\!])$
and $(A,\{\cdot,\cdot,\cdot\},[\![\cdot,\cdot,\cdot]\!])$ is called a compatible $3$-NS-Lie algebra on $(A,[\cdot,\cdot,\cdot]_*)$.
\begin{pro}\label{3nslie}
Let $(\mathfrak{g},[\cdot,\cdot,\cdot]_\mathfrak{g})$ be a $3$-Lie algebra and $N:\mathfrak{g}\rightarrow \mathfrak{g}$ be a Nijenhuis operator. Then
\begin{equation}
\{x,y,z\}=[Nx,Ny,z]_\mathfrak{g},
\end{equation}
\begin{equation}
[\![x,y,z]\!]=-N\big([Nx,y,z]_\mathfrak{g}+[x,Ny,z]_\mathfrak{g}+[x,y,Nz]_\mathfrak{g}-N[x,y,z]_\mathfrak{g}\big),
\end{equation}
defines a $3$-NS-Lie algebra on $\mathfrak{g}$.
\end{pro}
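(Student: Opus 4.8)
The plan is to recognize both operations as data already produced in the Nijenhuis-operator Example of Section~2, and thereby reduce all of the axioms to facts asserted there. Write $\rho(x,y)z=[Nx,Ny,z]_\mathfrak{g}$, so that $\{x,y,z\}=\rho(x,y)z$, and let $\Theta(x,y,z)=-N([Nx,y,z]_\mathfrak{g}+[x,Ny,z]_\mathfrak{g}+[x,y,Nz]_\mathfrak{g}-N[x,y,z]_\mathfrak{g})$, so that $[\![x,y,z]\!]=\Theta(x,y,z)$. That Example already asserts that $(\mathfrak{g},\rho)$ is a representation of the deformed $3$-Lie algebra $\mathfrak{g}_N$ and that $\Theta$ is a $2$-cocycle of $\mathfrak{g}_N$ with coefficients in $(\mathfrak{g},\rho)$; I would take these two facts for granted, as they follow from the Nijenhuis identity \eqref{eq:nijenhuis} together with \eqref{eq:de1}. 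The one preliminary computation I would record is that the subadjacent bracket coincides with the Nijenhuis-deformed bracket, i.e. $[x,y,z]_*=[x,y,z]_N$: using total skew-symmetry of $[\cdot,\cdot,\cdot]_\mathfrak{g}$ one rewrites the cyclic sum as $\{x,y,z\}^C=[Nx,Ny,z]_\mathfrak{g}+[Nx,y,Nz]_\mathfrak{g}+[x,Ny,Nz]_\mathfrak{g}$, and adding $[\![x,y,z]\!]$ reproduces exactly the formula defining $[\cdot,\cdot,\cdot]_N$.

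Next I would dispose of the two symmetry axioms. Identity \eqref{eq:Ns} is immediate, since $\{x_1,x_2,x_3\}=[Nx_1,Nx_2,x_3]_\mathfrak{g}$ is skew in its first two slots by skew-symmetry of $[\cdot,\cdot,\cdot]_\mathfrak{g}$. For \eqref{eq:Nss} I would show that the expression $P(x,y,z)=[Nx,y,z]_\mathfrak{g}+[x,Ny,z]_\mathfrak{g}+[x,y,Nz]_\mathfrak{g}-N[x,y,z]_\mathfrak{g}$ is totally antisymmetric: checking the two transpositions $(1\,2)$ and $(2\,3)$ using skew-symmetry of $[\cdot,\cdot,\cdot]_\mathfrak{g}$ each produces an overall sign $-1$, and since these generate $S_3$ the claim follows; applying the linear map $-N$ preserves antisymmetry, so $[\![\cdot,\cdot,\cdot]\!]=-N\circ P$ is totally skew-symmetric.

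The three remaining identities then require no new computation, since they are precisely the module and cocycle conditions translated through the dictionary $\{a,b,c\}=\rho(a,b)c$, $[\![a,b,c]\!]=\Theta(a,b,c)$, $[\cdot,\cdot,\cdot]_*=[\cdot,\cdot,\cdot]_N$. Concretely, evaluating the first of the representation identities \eqref{eq:rep} for $(\mathfrak{g},\rho)$ over $\mathfrak{g}_N$ on a fifth vector $x_5$, and splitting $\rho([x_1,x_2,x_3]_*,x_4)=\rho(\{x_1,x_2,x_3\}^C,x_4)+\rho([\![x_1,x_2,x_3]\!],x_4)$ by linearity, yields \eqref{eq:NS1}; the second representation identity, evaluated on $x_5$, yields \eqref{eq:NS2}; and the $2$-cocycle condition \eqref{eq:cocycle} for $\Theta$ over $\mathfrak{g}_N$, after moving its three negative $\Theta$-terms and three $\rho$-terms to the right-hand side, is exactly \eqref{eq:NS3}. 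Because the Example guarantees that $(\mathfrak{g},\rho)$ is a $\mathfrak{g}_N$-module and that $\Theta$ is a $2$-cocycle, all three identities hold.

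I expect the only genuinely delicate point to be the bookkeeping: matching the cyclic permutations of arguments in \eqref{eq:NS2} and the signed permutations in \eqref{eq:NS3} to the correct terms of \eqref{eq:rep} and \eqref{eq:cocycle}, and confirming that $\{x,y,z\}^C+[\![x,y,z]\!]$ really equals $[\cdot,\cdot,\cdot]_N$ rather than a permuted variant. If one instead wanted a fully self-contained argument, the main obstacle would shift to verifying directly from the Nijenhuis identity \eqref{eq:nijenhuis} that $\rho$ satisfies \eqref{eq:rep} and that $\Theta$ satisfies \eqref{eq:cocycle}; this is a lengthy but purely mechanical expansion of iterated brackets, which the conceptual route above lets us avoid.
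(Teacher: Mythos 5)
Your translation of the three axioms \eqref{eq:NS1}--\eqref{eq:NS3} into the two representation identities and the $2$-cocycle identity for $\rho(x,y)z=[Nx,Ny,z]_\mathfrak{g}$ and $\Theta$ over $\mathfrak{g}_N$ is correct, as is the identification $[\cdot,\cdot,\cdot]_*=[\cdot,\cdot,\cdot]_N$ and the symmetry checks; this is a genuinely different organization from the paper, which instead verifies \eqref{eq:NS1} by a direct expansion (using $N[x,y,z]_N=[Nx,Ny,Nz]_\mathfrak{g}$ to collapse $\{\{x_1,x_2,x_3\}^C,x_4,x_5\}+\{[\![x_1,x_2,x_3]\!],x_4,x_5\}$ into $[[Nx_1,Nx_2,Nx_3]_\mathfrak{g},Nx_4,x_5]_\mathfrak{g}$ and then invoking \eqref{eq:de1}) and leaves \eqref{eq:NS2}, \eqref{eq:NS3} to the reader.

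The gap is that the two facts you ``take for granted'' --- that $(\mathfrak{g},\rho)$ is a $\mathfrak{g}_N$-module and that $\Theta$ is a $2$-cocycle --- are nowhere proved in the paper: the Example in Section~2 merely asserts them and explicitly defers their justification to Section~5, where (via the later proposition that any $3$-NS-Lie algebra yields a representation $L$ and a $2$-cocycle $\Theta$ on its subadjacent algebra) they are obtained as \emph{consequences} of the very proposition you are proving. As written, your argument is therefore circular. The circularity is breakable, since the module and cocycle identities do follow directly from \eqref{eq:nijenhuis} and \eqref{eq:de1}, but that verification is precisely the computational content you have deferred; note in particular that the first representation identity, after applying $N[\cdot,\cdot,\cdot]_N=[N\cdot,N\cdot,N\cdot]_\mathfrak{g}$, is literally the same calculation the paper performs for \eqref{eq:NS1}, so the conceptual route does not actually avoid the work --- it only relocates it. To complete the proof you must carry out at least that computation (and its analogue for the cocycle identity, which underlies \eqref{eq:NS3}) rather than cite the Example.
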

\begin{proof} Let $x,y,z\in \mathfrak{g}$. It is obvious that
\begin{align*}
\{x,y,z\}=[Nx,Ny,z]_\mathfrak{g}=-[Ny,Nx,z]_\mathfrak{g}=-\{y,x,z\},
\end{align*}
and
\begin{align*}
&[\![x,y,z]\!]=-N\big([Nx,y,z]_\mathfrak{g}+[x,Ny,z]_\mathfrak{g}+[x,y,Nz]_\mathfrak{g}-N[x,y,z]_\mathfrak{g}\big)\\
&-\big(-N\big([Ny,x,z]_\mathfrak{g}+[y,Nx,z]_\mathfrak{g}+[y,x,Nz]_\mathfrak{g}-N[y,x,z]_\mathfrak{g}\big)\big)=-[\![y,x,z]\!].
\end{align*}
Similarly, we get $[\![x,y,z]\!]=-[\![x,z,y]\!]$. For $x_1,x_2,x_3,x_4,x_5\in \mathfrak{g}$, we have
\begin{align*}
&\{x_1,x_2,\{x_3,x_4,x_5\}\}-\{\{x_1,x_2,x_3\}^C,x_4,x_5\}-\{[\![x_1,x_2,x_3]\!],x_4,x_5\}\\
&-\{x_3,\{x_1,x_2,x_4\}^C,x_5\}-\{x_3,[\![x_1,x_2,x_4]\!],x_5\}-\{x_3,x_4,\{x_1,x_2,x_5\}\}\\
&=[Nx_1,Nx_2,[Nx_3,Nx_4,x_5]_\mathfrak{g}]_\mathfrak{g}\\
&-[N[Nx_1,Nx_2,x_3]_\mathfrak{g}+N[Nx_2,Nx_3,x_1]_\mathfrak{g}+N[Nx_3,Nx_1,x_2]_\mathfrak{g},Nx_4,x_5]_\mathfrak{g}\\
&-[N\big(-N\big([Nx_1,x_2,x_3]_\mathfrak{g}+[x_1,Nx_2,x_3]_\mathfrak{g}+[x_1,x_2,Nx_3]_\mathfrak{g}-N[x_1,x_2,x_3]_\mathfrak{g}\big)\big),Nx_4,x_5]_\mathfrak{g}\\
&-[Nx_3,N[Nx_1,Nx_2,x_4]_\mathfrak{g}+N[Nx_2,Nx_4,x_1]_\mathfrak{g}+N[Nx_4,Nx_1,x_2]_\mathfrak{g},x_5]_\mathfrak{g}\\
&-[Nx_3,N\big(-N\big([Nx_1,x_2,x_4]_\mathfrak{g}+[x_1,Nx_2,x_4]_\mathfrak{g}+[x_1,x_2,Nx_4]_\mathfrak{g}-N[x_1,x_2,x_4]_\mathfrak{g}\big)\big),x_5]_\mathfrak{g}\\
&-[Nx_3,Nx_4,[Nx_1,Nx_2,x_5]_\mathfrak{g}]_\mathfrak{g}\\
^{\eqref{eq:nijenhuis}}&=[Nx_1,Nx_2,[Nx_3,Nx_4,x_5]_\mathfrak{g}]_\mathfrak{g}-[[Nx_1,Nx_2,Nx_3]_\mathfrak{g},Nx_4,x_5]_\mathfrak{g}\\
&-[Nx_3,[Nx_1,Nx_2,Nx_4]_\mathfrak{g},x_5]_\mathfrak{g}-[Nx_3,Nx_4,[Nx_1,Nx_2,x_5]_\mathfrak{g}]_\mathfrak{g}\\
^{\eqref{eq:de1}}&=0.
\end{align*}
Then Eq.\eqref{eq:NS1} holds. One may prove similarly Eq.\eqref{eq:NS2} and  Eq.\eqref{eq:NS3}, we leave details to readers. This completes the proof.\end{proof}
\begin{ex}
 Let $(\mathfrak{g},[\cdot,\cdot,\cdot]_\mathfrak{g})$ be a $3$-dimensional $3$-Lie algebra
with a basis $\{e_1,e_2,e_3\}$ whose nonzero
brackets are given as follows:
\begin{equation*}
    [e_1,e_2,e_3]_{\mathfrak{g}}=e_2.
\end{equation*}
Let $N:\mathfrak{g}\rightarrow \mathfrak{g}$ be a linear map given with respect to the basis by
\begin{align*}
 N=\left(
                     \begin{array}{ccc}
                       d & 0 & 0 \\
                       0 & c & f \\
                       0 & 0 & c \\
                     \end{array}
                   \right),
 \end{align*}
where $d,\ c,\ f$ are  parameters such that $dc\neq 0$. Then we have 
 \begin{equation*}
     [Ne_1,Ne_2,Ne_3]_{\mathfrak{g}}=dc^{2}e_2.
 \end{equation*}
 Furthermore, we have
 \begin{align*}
     &[Ne_1,Ne_2,e_3]_{\mathfrak{g}}=dce_2,\quad [Ne_1,e_2,e_3]_{\mathfrak{g}}=de_2,\\
      &[Ne_1,e_2,Ne_3]_{\mathfrak{g}}=dce_2,\quad [e_1,Ne_2,e_3]_{\mathfrak{g}}=ce_2,\\
       &[e_1,Ne_2,Ne_3]_{\mathfrak{g}}=c^{2}e_2,\quad\; [e_1,e_2,Ne_3]_{\mathfrak{g}}=ce_2.\\
 \end{align*}
 It is straightforward to deduce that
 \begin{align*}
&[Ne_1,Ne_2,Ne_3]_\mathfrak{g}=N\Big([Ne_1,Ne_2,e_3]_\mathfrak{g}+[Ne_1,e_2,Ne_3]_\mathfrak{g}+[e_1,Ne_2,Ne_3]_\mathfrak{g}\nonumber\\
&-N([Ne_1,e_2,e_3]_\mathfrak{g}+[e_1,Ne_2,e_3]_\mathfrak{g}+[e_1,e_2,Ne_3]_\mathfrak{g})+N^{2}[e_1,e_2,e_3]_\mathfrak{g}\Big).
\end{align*}
Thus, $N$ is a Nijenhuis operator on $(\mathfrak{g},[\cdot,\cdot,\cdot]_\mathfrak{g})$. By Proposition \ref{3nslie}, we get that $(\mathfrak{g},\{\cdot,\cdot,\cdot\},[\![\cdot,\cdot,\cdot]\!])$ is a $3$-dimensional $3$-NS-Lie algebra with a basis $\{e_1, e_2, e_3\}$ whose nonzero
brackets are given as follows:
\begin{equation*}
    \{e_1,e_2,e_3\}=dce_2,\quad [\![e_1,e_2,e_3]\!]=-(dc+c^{2})e_2.
\end{equation*}
\end{ex}
Let $(A,\{\cdot,\cdot,\cdot\},[\![\cdot,\cdot,\cdot]\!])$ be an $3$-NS-Lie algebra. Define the left multiplication $L:\wedge^{2}A\rightarrow gl(A)$ by
$L(x,y)z=\{x,y,z\}$ and $\Theta:\wedge^{3}A\rightarrow A$ by $\Theta(x,y,z)=[\![x,y,z]\!]$, for $x,y,z\in A$. With these notations, we have the following proposition.
\begin{pro}
Let $(A,\{\cdot,\cdot,\cdot\},[\![\cdot,\cdot,\cdot]\!])$ be a $3$-NS-Lie algebra. Then $(A,L)$ is a representation of the subadjacent $3$-Lie algebra $(A,[\cdot,\cdot,\cdot]_*)$
and $\Theta$ defined above is a $2$-cocycle. Moreover, the identity map $Id:A\rightarrow A$ is a $\Theta$-twisted $\mathcal{O}$-operator on the
$3$-Lie algebra $(A,[\cdot,\cdot,\cdot]_*)$ with respect to $(A,L)$.
\end{pro}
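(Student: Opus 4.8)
The plan is to verify the three assertions by directly matching the three structural axioms of a $3$-NS-Lie algebra against the representation, $2$-cocycle, and twisted $\mathcal{O}$-operator conditions. In each case the required identity should collapse to one of \eqref{eq:NS1}, \eqref{eq:NS2}, \eqref{eq:NS3}, or to the very definition of $[\cdot,\cdot,\cdot]_*$, so the whole proof is essentially a dictionary translation rather than a computation.

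For the first claim, that $(A,L)$ is a representation of $(A,[\cdot,\cdot,\cdot]_*)$, I would write out the two representation axioms \eqref{eq:rep} with $\rho=L$, the bracket $[\cdot,\cdot,\cdot]_*$, and both operator sides applied to an arbitrary $x_5\in A$. The only input needed is that $L$ is linear in its first slot, so that whenever $[x_1,x_2,x_3]_*$ sits in the first argument it splits as $\{[x_1,x_2,x_3]_*,x_4,x_5\}=\{\{x_1,x_2,x_3\}^C,x_4,x_5\}+\{[\![x_1,x_2,x_3]\!],x_4,x_5\}$, and similarly in the second slot. After this substitution the first axiom of \eqref{eq:rep} becomes verbatim \eqref{eq:NS1}, and the second becomes verbatim \eqref{eq:NS2}; here $x_5$ plays the role of the module element.

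For the cocycle claim, I would first record that $\Theta=[\![\cdot,\cdot,\cdot]\!]$ is totally skew-symmetric by \eqref{eq:Nss}, so it is a genuine element of $\mathfrak{C}_{3Lie}^{2}(A;A)$ and the condition \eqref{eq:cocycle} applies. I would then expand \eqref{eq:cocycle} with $\rho=L$, $\Theta=[\![\cdot,\cdot,\cdot]\!]$, $[\cdot,\cdot,\cdot]_\mathfrak{g}=[\cdot,\cdot,\cdot]_*$, and $(y_1,y_2,y_3)=(x_3,x_4,x_5)$. Rewriting each $\rho(\cdot,\cdot)\Theta(\cdot,\cdot,\cdot)$ as the corresponding $\{\cdot,\cdot,[\![\cdot,\cdot,\cdot]\!]\}$ and leaving the $[\cdot,\cdot,\cdot]_*$ occurrences untouched inside the outer $[\![\cdot,\cdot,\cdot]\!]$-brackets, and moving the negative terms to the right, the identity rearranges precisely into \eqref{eq:NS3}. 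Note that no expansion of $[\cdot,\cdot,\cdot]_*$ is required at this step, which is what keeps the match transparent.

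Finally, for the twisted $\mathcal{O}$-operator claim, I would substitute $T=Id$ and $\rho=L$ into \eqref{eq:twisted}: the left side is $[u,v,w]_*$ and the right side is $\{u,v,w\}+\{v,w,u\}+\{w,u,v\}+[\![u,v,w]\!]=\{u,v,w\}^C+[\![u,v,w]\!]$, which equals $[u,v,w]_*$ by the definition of the subadjacent bracket, so \eqref{eq:twisted} holds identically. The one point needing genuine care is the cocycle step: one must correctly route each of the three terms $\rho(y_2,y_3)\Theta(x_1,x_2,y_1)$, $\rho(y_3,y_1)\Theta(x_1,x_2,y_2)$, $\rho(y_1,y_2)\Theta(x_1,x_2,y_3)$ of \eqref{eq:cocycle} to the three cyclic terms $\{x_4,x_5,[\![x_1,x_2,x_3]\!]\}$, $\{x_5,x_3,[\![x_1,x_2,x_4]\!]\}$, $\{x_3,x_4,[\![x_1,x_2,x_5]\!]\}$ of \eqref{eq:NS3}, tracking signs; everything else is immediate substitution, so I would present the three parts compactly, citing \eqref{eq:NS1}, \eqref{eq:NS2}, and \eqref{eq:NS3} respectively.
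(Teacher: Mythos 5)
Your proposal is correct and follows essentially the same route as the paper: split $\{[x_1,x_2,x_3]_*,x_4,x_5\}$ by linearity into its $\{\cdot,\cdot,\cdot\}^C$ and $[\![\cdot,\cdot,\cdot]\!]$ parts so that the two representation axioms become exactly \eqref{eq:NS1} and \eqref{eq:NS2}, read the $2$-cocycle condition as a verbatim restatement of \eqref{eq:NS3}, and note that for $T=Id$ the twisted $\mathcal{O}$-operator identity reduces to the definition of $[\cdot,\cdot,\cdot]_*$. Your dictionary, including the routing of the three $\rho(y_i,y_j)\Theta(\cdot)$ terms to the cyclic terms of \eqref{eq:NS3}, matches the paper's computation exactly.
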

\begin{proof} For any $x_i\in A,\;1\leq i\leq 5$, we have
\begin{align*}
&\quad\;L([x_1,x_2,x_3]_*,x_4)x_5+L(x_3,[x_1,x_2,x_4]_*)x_5=\{[x_1,x_2,x_3]_*,x_4,x_5\}+\{x_3,[x_1,x_2,x_4]_*,x_5\}\\
^{\eqref{eq:NS1}}=&\quad \;\{x_1,x_2,\{x_3,x_4,x_5\}\}-\{x_3,x_4,\{x_1,x_2,x_5\}\}=[L(x_1,x_2),L(x_3,x_4)]_{End(A)}(x_5).
\end{align*}
Similarly, by Eq.\eqref{eq:NS2}
\begin{align*}
&\quad\;L([x_1,x_2,x_3]_*,x_4)x_5=L(x_1,x_2)\circ L(x_3,x_4)x_5\\
&+L(x_2,x_3)\circ L(x_1,x_4)x_5+L(x_3,x_1)\circ L(x_2,x_4)x_5.
\end{align*}
Therefore, $(A,L)$ is a representation of $(A,[\cdot,\cdot,\cdot]_*)$. Moreover,  Condition Eq.\eqref{eq:NS3} is equivalent
to $\Theta$ is a $2$-cocycle in the Chevalley-Eilenberg cochain complex of the $3$-Lie algebra $(A,[\cdot,\cdot,\cdot]_*)$ with coefficients
in the representation $(A,L)$. Finally, we have
\begin{align*}
&\quad\;Id\big(L(Idx,Idy)z+L(Idy,Idz)x+L(Idz,Idx)y+\Theta(Idx,Idy,Idz)\big)\\
&=\{x,y,z\}+\{y,z,x\}+\{z,x,y\}+[\![x,y,z]\!]\\
&=[x,y,z]_*=[Idx,Idy,Idz]_*,
\end{align*}
which shows that $Id:A\rightarrow A$ is a $\Theta$-twisted Rota-Baxter operator on the $3$-Lie algebra $(A,[\cdot,\cdot,\cdot]_*)$
with respect to the representation $(A,L)$.\end{proof}
\begin{pro}\label{eq:onV}
Let $(\mathfrak{g},[\cdot,\cdot,\cdot]_\mathfrak{g})$ be a $3$-Lie algebra, $(V,\rho)$ be a representation and $\Theta \in\mathfrak{C}_{3Lie}^{2}(\mathfrak{g};V)$ be a $2$-cocycle. Let
$T:V\rightarrow \mathfrak{g}$ be a $\Theta$-twisted $\mathcal{O}$-operator. Then there is a $3$-NS-Lie algebra structure on $V$ given by
\begin{equation}
\{u,v,w\}=\rho(Tu,Tv)w,\;[\![u,v,w]\!]=\Theta(Tu,Tv,Tw),\;\forall u,v,w\in V.
\end{equation}
\end{pro}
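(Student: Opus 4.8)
The plan is to verify directly the five defining identities \eqref{eq:Ns}--\eqref{eq:NS3} of a $3$-NS-Lie algebra for the operations $\{u,v,w\}=\rho(Tu,Tv)w$ and $[\![u,v,w]\!]=\Theta(Tu,Tv,Tw)$. The two symmetry axioms are immediate. Since $\rho$ is defined on $\wedge^{2}\mathfrak{g}$, it is skew in its first two slots, whence $\{u,v,w\}=\rho(Tu,Tv)w=-\rho(Tv,Tu)w=-\{v,u,w\}$, which is \eqref{eq:Ns}; and since $\Theta$ is a cochain on $\wedge^{3}\mathfrak{g}$, it is totally skew-symmetric, so $[\![u,v,w]\!]=\Theta(Tu,Tv,Tw)$ satisfies \eqref{eq:Nss}.

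The crucial preliminary observation, which I would record before touching \eqref{eq:NS1}--\eqref{eq:NS3}, is that the subadjacent bracket of the candidate structure is exactly the induced $3$-Lie bracket $[\cdot,\cdot,\cdot]_T$ of \eqref{eq:cro1}. Indeed $\{u,v,w\}^{C}=\rho(Tu,Tv)w+\rho(Tv,Tw)u+\rho(Tw,Tu)v$ and $[\![u,v,w]\!]=\Theta(Tu,Tv,Tw)$, so $[u,v,w]_{*}=\{u,v,w\}^{C}+[\![u,v,w]\!]=[u,v,w]_{T}$. Because $T$ is an algebra morphism from $(V,[\cdot,\cdot,\cdot]_{T})$ to $\mathfrak{g}$, we get the key relation $T[u,v,w]_{*}=[Tu,Tv,Tw]_\mathfrak{g}$. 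This is precisely what allows $T$ to be pushed through every nested bracket occurring in the three remaining axioms.

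With this in hand, each of \eqref{eq:NS1}, \eqref{eq:NS2}, \eqref{eq:NS3} collapses to a known identity evaluated at $Tx_{1},\dots,Tx_{5}$. The reduction uses linearity of $\{-,\,y,z\}=\rho(T(-),Ty)z$ in its first slot (and of $\{x,-,z\}$ in its middle slot) to merge each pair of terms $\{\{x_i,x_j,x_k\}^{C},\,\cdot,\cdot\}+\{[\![x_i,x_j,x_k]\!],\,\cdot,\cdot\}$ into a single term whose inner argument is $[x_i,x_j,x_k]_{*}=[x_i,x_j,x_k]_{T}$, after which $T[\cdots]_{*}$ becomes a bracket in $\mathfrak{g}$. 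Carrying this out, \eqref{eq:NS1} turns into the first of the two representation identities displayed at \eqref{eq:rep}, \eqref{eq:NS2} turns into the second one, and \eqref{eq:NS3}, once all the $T[\cdots]_{*}$ are replaced by brackets in $\mathfrak{g}$, becomes exactly the $2$-cocycle condition \eqref{eq:cocycle} under the substitution $(x_1,x_2,y_1,y_2,y_3)=(Tx_1,Tx_2,Tx_3,Tx_4,Tx_5)$.

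I expect the only genuine work to be the bookkeeping in \eqref{eq:NS3}: one must match the six terms on each of its sides against the eight terms of \eqref{eq:cocycle}, tracking the cyclic placement of the $\rho(\cdot,\cdot)\Theta(\cdots)$ contributions together with the signs encoded in the skew-symmetry of $\rho$ and $\Theta$. By contrast, \eqref{eq:NS1} and \eqref{eq:NS2} are essentially term-by-term transcriptions of the representation axioms once the morphism identity $T[\cdots]_{*}=[\,T\cdots\,]_\mathfrak{g}$ is applied, so no obstruction arises there. The proof is complete as soon as \eqref{eq:NS3} is identified with \eqref{eq:cocycle}.
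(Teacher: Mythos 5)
Your proposal is correct and follows essentially the same route as the paper: both verify the two skew-symmetry axioms directly, observe that $[\cdot,\cdot,\cdot]_*=[\cdot,\cdot,\cdot]_T$ so that $T[u,v,w]_*=[Tu,Tv,Tw]_\mathfrak{g}$, and then reduce \eqref{eq:NS1} and \eqref{eq:NS2} to the two representation identities and \eqref{eq:NS3} to the $2$-cocycle condition \eqref{eq:cocycle} evaluated at $Tx_1,\dots,Tx_5$. Nothing is missing; your description is in fact a cleaner account of the computation the paper writes out term by term.
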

\begin{proof} Let $u, v,w\in V$. It is obvious that
\begin{equation*}
\{u, v,w\} = \rho(Tu, Tv)w = -\rho(Tv, Tu)w = -\{v, u,w\},
\end{equation*}
and $[\![u, v,w]\!]$ is skew-symmetric since $\Theta$ is a $2$-cocycle.
Furthermore, the following equation holds.
\begin{equation*}
[u, v,w]_* = \rho(Tu, Tv)w + \rho(Tv, Tw)u + \rho(Tw, Tu)v +\Theta(Tu,Tv,Tw).
\end{equation*}
Since $T$ is a $\Theta$-twisted $\mathcal{O}$-operator, we have
\begin{equation*}
T[u, v,w]_* = [Tu, Tv, Tw]_\mathfrak{g}.
\end{equation*}
For $u_i\in V,\;1\leq i \leq5$, we have
\begin{align*}
&\{[u_1,u_2,u_3]_*,u_4,u_5\}=\rho([Tu_1,Tu_2,Tu_3]_*,Tu_4)u_5\\
&=\rho(Tu_1,Tu_2)\rho(Tu_3,Tu_4)u_5+\rho(Tu_1,Tu_3)\rho(Tu_1,Tu_4)u_5+\rho(Tu_3,Tu_1)\rho(Tu_2,Tu_4)u_5\\
&=\{[u_1,u_2,u_3]_*,u_4,u_5\}+\{u_3,[u_1,u_2,u_4]_*,u_5\}+\{u_3,u_4,\{u_1,u_2,u_5\}\}.
\end{align*}
Similarly,
\begin{align*}
&\{u_1,u_2,\{u_3,u_4,u_5\}\}=\rho(Tu_1,Tu_2)\rho(Tu_3,Tu_4)u_5\\
&=\rho([Tu_1,Tu_2,Tu_3]_*,Tu_4)u_5+\rho(Tu_3,[Tu_1,Tu_2,Tu_4]_*)u_5+\rho(Tu_3,Tu_4)\rho(Tu_1,Tu_2)u_5\\
&=\{u_1,u_2,\{u_3,u_4,u_5\}\}+\{u_2,u_3,\{u_1,u_4,u_5\}\}+\{u_3,u_1,\{u_2,u_4,u_5\}\}.
\end{align*}
Since $(V, \rho)$ is a representation of $A$, Eq.\eqref{eq:NS1} and \eqref{eq:NS2} holds.
\begin{align*}
&\{u_1,u_2,[u_3,u_4,u_5]_*\}+\{u_1,u_2,[\![u_3,u_4,u_5]\!]\}\\
&=\Theta(Tu_1,Tu_2,[Tu_3,Tu_3,Tu_5]_\mathfrak{g})+\rho(Tu_1,Tu_2)\Theta(Tu_3,Tu_4,Tu_5)\\
&=\Theta([Tu_1,Tu_2,Tu_3]_\mathfrak{g},Tu_4,Tu_5)+\Theta(Tu_3,[Tu_1,Tu_2,Tu_4]_\mathfrak{g},Tu_5)\\
&+\Theta(Tu_3,Tu_4,[Tu_1,Tu_2,Tu_5]_\mathfrak{g})+\rho(Tu_4,Tu_5)\Theta(Tu_1,Tu_2,Tu_3)\\
&+\rho(Tu_5,Tu_3)\Theta(Tu_1,Tu_2,Tu_4)+\rho(Tu_3,Tu_4)\Theta(Tu_1,Tu_2,Tu_5)\\
&=[\![[u_1,u_2,u_3]_{*},u_4,u_5]\!]+[\![u_3,[u_1,u_2,u_4]_{*},u_5]\!]+[\![u_3,u_4,[u_1,u_2,u_5]_{*}]\!]\\
&+\{u_4,u_5,[\![u_1,u_2,u_3]\!]\}+\{u_5,u_3,[\![u_1,u_2,u_4]\!]\}+\{u_3,u_4,[\![u_1,u_2,u_5]\!]\},
\end{align*}
since $\Theta$ is a $2$-cocycle, Eq.\eqref{eq:NS3} holds. This completes the proof.\end{proof}
\begin{rmk}There is a pair of adjoint functors between the category of $3$-NS-Lie algebras and $3$-Lie algebras together with  $\Theta$-twisted $\mathcal{O}$-operators.
\end{rmk}
In the following, we give a necessary and sufficient condition for the existence of a
compatible $3$-NS-Lie algebra structure on a $3$-Lie algebra.
\begin{pro}
 Let $(\mathfrak{g}, [\cdot,\cdot,\cdot]_\mathfrak{g})$ be a $3$-Lie algebra. Then there is a compatible $3$-NS-Lie
algebra structure on $\mathfrak{g}$ if and only if there exists an invertible $\Theta$-twisted $\mathcal{O}$-operator $T : V \rightarrow \mathfrak{g}$ on $\mathfrak{g}$ with respect to a representation $(V, \rho)$ and a $2$-cocycle $\Theta$.
Furthermore, the compatible  $3$-NS-Lie algebra structure on $\mathfrak{g}$ is given by
\begin{equation}
\{x,y,z\}=T(\rho(x,y)T^{-1}(z)),\;\;[\![x,y,z]\!]=T(\Theta(x,y,z)),\;for\;x,y,z\in \mathfrak{g}.
\end{equation}
\end{pro}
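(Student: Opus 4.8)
The plan is to prove the two implications separately, exploiting transport of structure along the invertible operator together with Proposition \ref{eq:onV} and the proposition immediately preceding it.

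For the ``if'' direction, suppose $T:V\to\mathfrak{g}$ is an invertible $\Theta$-twisted $\mathcal{O}$-operator with respect to $(V,\rho)$. By Proposition \ref{eq:onV}, $V$ carries a $3$-NS-Lie algebra structure with $\{u,v,w\}_V=\rho(Tu,Tv)w$ and $[\![u,v,w]\!]_V=\Theta(Tu,Tv,Tw)$. Since $T$ is a linear isomorphism, I would transport this structure to $\mathfrak{g}$ by setting $\{x,y,z\}=T\{T^{-1}x,T^{-1}y,T^{-1}z\}_V$ and $[\![x,y,z]\!]=T[\![T^{-1}x,T^{-1}y,T^{-1}z]\!]_V$; a direct substitution using $TT^{-1}=Id$ turns these into the asserted formulas $T(\rho(x,y)T^{-1}z)$ and $T(\Theta(x,y,z))$. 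Transport of structure along a bijection automatically preserves the multilinear defining identities \eqref{eq:Ns}--\eqref{eq:NS3}, so $(\mathfrak{g},\{\cdot,\cdot,\cdot\},[\![\cdot,\cdot,\cdot]\!])$ is a $3$-NS-Lie algebra.

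The one point requiring verification---and the main (though mild) obstacle---is that this $3$-NS-Lie algebra is \emph{compatible}, i.e.\ its subadjacent bracket $[\cdot,\cdot,\cdot]_*$ coincides with the given $[\cdot,\cdot,\cdot]_\mathfrak{g}$. Writing $u=T^{-1}x$, $v=T^{-1}y$, $w=T^{-1}z$ and expanding
\[
[x,y,z]_*=\{x,y,z\}^C+[\![x,y,z]\!]=T\big(\rho(Tu,Tv)w+\rho(Tv,Tw)u+\rho(Tw,Tu)v+\Theta(Tu,Tv,Tw)\big),
\]
the defining identity \eqref{eq:twisted} of a $\Theta$-twisted $\mathcal{O}$-operator collapses the right-hand side to $[Tu,Tv,Tw]_\mathfrak{g}=[x,y,z]_\mathfrak{g}$, which yields compatibility.

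For the ``only if'' direction, I would assume a compatible $3$-NS-Lie algebra structure on $\mathfrak{g}$, so $[\cdot,\cdot,\cdot]_*=[\cdot,\cdot,\cdot]_\mathfrak{g}$, and invoke the proposition preceding Proposition \ref{eq:onV}: with $L(x,y)z=\{x,y,z\}$ and $\Theta(x,y,z)=[\![x,y,z]\!]$, the pair $(\mathfrak{g},L)$ is a representation of $(\mathfrak{g},[\cdot,\cdot,\cdot]_\mathfrak{g})$, $\Theta$ is a $2$-cocycle, and $Id:\mathfrak{g}\to\mathfrak{g}$ is a $\Theta$-twisted $\mathcal{O}$-operator. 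Taking $V=\mathfrak{g}$, $\rho=L$ and $T=Id$ furnishes the required invertible twisted $\mathcal{O}$-operator, and substituting $T=T^{-1}=Id$ into the asserted formulas returns $\{x,y,z\}$ and $[\![x,y,z]\!]$ exactly, completing the equivalence.
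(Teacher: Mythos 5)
Your proposal is correct and follows essentially the same route as the paper: the forward direction transports the $3$-NS-Lie structure of Proposition \ref{eq:onV} from $V$ to $\mathfrak{g}$ along the invertible $T$ and checks compatibility via the defining identity \eqref{eq:twisted}, while the converse takes $V=\mathfrak{g}$, $\rho=L$ and $T=Id$ using the proposition that precedes Proposition \ref{eq:onV}. No gaps.
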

\begin{proof} Let $T : V \rightarrow \mathfrak{g}$ be an invertible $\Theta$-twisted $\mathcal{O}$-operator on $\mathfrak{g}$
with respect to a representation $(V, \rho)$ and a $2$-cocycle $\Theta$. By Proposition \ref{eq:onV}, there
is a $3$-NS-Lie algebra structure on $V$ given by
\begin{equation*}
\{u,v,w\}=\rho(Tu,Tv)w,\;[\![u,v,w]\!]=\Theta(Tu,Tv,Tw),\;\forall u,v,w\in V.
\end{equation*}
Since $T$ is an invertible map,  operations
\begin{equation*}
\{x,y,z\}=T\{T^{-1}(x),T^{-1}(y),T^{-1}(z)\}=T(\rho(x,y)T^{-1}(z)),
\end{equation*}
\begin{equation*}
[\![x,y,z]\!]=T[\![T^{-1}(x),T^{-1}(y),T^{-1}(z)]\!]=T(\Theta(x,y,z)),\;\forall x,y,z\in \mathfrak{g},
\end{equation*}
defines a $3$-NS-Lie algebra on $\mathfrak{g}$. Moreover, we have
\begin{align*}
&\{x,y,z\}+\{y,z,x\}+\{z,x,y\}+[\![x,y,z]\!]\\
&=T\big(\rho(x,y)T^{-1}(z)+\rho(y,z)T^{-1}(x)+\rho(z,x)T^{-1}(y)+\Theta(x,y,z)\big)\\
&=T\big(\rho(T\circ T^{-1}(x),T\circ T^{-1}(y))T^{-1}(z)+\rho(T\circ T^{-1}(y),T\circ T^{-1}(z))T^{-1}(x)\\
&+\rho(T\circ T^{-1}(z),T\circ T^{-1}(x))T^{-1}(y)+\Theta(T\circ T^{-1}(x),T\circ T^{-1}(y),T \circ T^{-1}(z))\big)\\
&=[T\circ T^{-1}(x),T\circ T^{-1}(y),T \circ T^{-1}(z)]_\mathfrak{g}=[x,y,z]_\mathfrak{g}.
\end{align*}
Conversely, the identity map $Id:\mathfrak{g}\rightarrow \mathfrak{g}$ is a $\Theta$-twisted $\mathcal{O}$-operator on $(\mathfrak{g},[\cdot,\cdot,\cdot]_\mathfrak{g})$ with respect to the representation
$(\mathfrak{g},L)$. \end{proof}

\section{Twisted $\mathcal{O}$-operators on $3$-Lie algebras induced by Lie algebras}

In the following, we provide and investigate  twisted $\mathcal{O}$-operators on $3$-Lie algebras induced by Lie algebras $($see \cite{AMS0,AKMS}$)$. 
Recall that given a Lie algebra and a trace map
one can construct a $3$-Lie algebra. Let $(\mathfrak{g}, [\cdot, \cdot]_{\mathfrak{g}})$ be a Lie algebra and
$\tau : \mathfrak{g} \rightarrow \mathbb{K}$ be a  linear form. We say that $\tau$ is a trace map on $\mathfrak{g}$ if $\tau([x,y]) = 0,\;\forall x,y\in \mathfrak{g}$. For
any $x_1, x_2, x_3 \in \mathfrak{g}$, we define the ternary bracket $[\cdot,\cdot,\cdot]_\tau$ by
\begin{equation}\label{eq:induced1}
[x_1,x_2,x_3]_{\tau}=\tau(x_1)[x_2,x_3]_{\mathfrak{g}}+\tau(x_2)[x_3,x_1]_{\mathfrak{g}}+\tau(x_3)[x_1,x_2]_{\mathfrak{g}}.
\end{equation}
\begin{lem}
Let $(\mathfrak{g},[\cdot,\cdot]_{\mathfrak{g}})$ be a Lie algebra and $\tau$ be a trace map on $\mathfrak{g}$, then $(\mathfrak{g},[\cdot,\cdot,\cdot]_{\tau})$ is a $3$-Lie algebra, called  $3$-Lie algebra induced by the Lie algebra $(\mathfrak{g},[\cdot,\cdot]_{\mathfrak{g}})$.
\end{lem}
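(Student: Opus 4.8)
The plan is to verify directly that the ternary bracket $[\cdot,\cdot,\cdot]_\tau$ defined in \eqref{eq:induced1} is skew-symmetric and satisfies the Filippov-Jacobi Identity \eqref{eq:de1}. Skew-symmetry is immediate from the defining formula: the expression $\tau(x_1)[x_2,x_3]_\mathfrak{g}+\tau(x_2)[x_3,x_1]_\mathfrak{g}+\tau(x_3)[x_1,x_2]_\mathfrak{g}$ is built from the cyclic sum of $\tau(x_i)$ times the commutator of the other two, so swapping any two arguments flips the sign, using only the antisymmetry of $[\cdot,\cdot]_\mathfrak{g}$. Hence the only real content is the fundamental identity.

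\textbf{Setting up the fundamental identity.} First I would expand both sides of \eqref{eq:de1} for the bracket $[\cdot,\cdot,\cdot]_\tau$. The left-hand side $[x_1,x_2,[x_3,x_4,x_5]_\tau]_\tau$ requires substituting the inner bracket and then applying \eqref{eq:induced1} again; crucially, since $\tau$ is a trace map, $\tau([u,v]_\mathfrak{g})=0$, so any term in which $\tau$ is evaluated on a commutator vanishes. This trace condition is exactly what collapses the double application of the bracket into manageable pieces: when we feed $[x_3,x_4,x_5]_\tau$ into the outer bracket, the coefficient $\tau([x_3,x_4,x_5]_\tau)$ is a sum of terms each having $\tau$ applied to a commutator, hence is zero. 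After this simplification, both sides become combinations of scalars $\tau(x_i)\tau(x_j)$ multiplying nested commutators $[x_k,[x_l,x_m]_\mathfrak{g}]_\mathfrak{g}$.

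\textbf{Organizing the comparison.} The strategy is then to collect terms on each side according to which pair $\tau(x_i)\tau(x_j)$ of trace factors appears, and to show that for each such coefficient the accompanying sum of nested commutators matches, invoking the ordinary Jacobi identity of $(\mathfrak{g},[\cdot,\cdot]_\mathfrak{g})$ in the form $[x,[y,z]_\mathfrak{g}]_\mathfrak{g}+[y,[z,x]_\mathfrak{g}]_\mathfrak{g}+[z,[x,y]_\mathfrak{g}]_\mathfrak{g}=0$. Because $x_1,x_2$ are distinguished (they sit in the ``acting'' slots on both sides of \eqref{eq:de1}), the natural bookkeeping is to separate the terms carrying $\tau(x_1)\tau(x_2)$ from those carrying $\tau(x_1)\tau(x_j)$ or $\tau(x_2)\tau(x_j)$ with $j\in\{3,4,5\}$, and finally the terms with both trace factors among $\{3,4,5\}$; the last group should reduce to Jacobi identities directly, while the mixed groups require combining several Jacobi relations.

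\textbf{The main obstacle} will be the sheer bookkeeping: matching the two sides is a finite but intricate reorganization of roughly a dozen terms per trace-coefficient class, and the temptation to lose a sign in the cyclic commutators is real. I expect the cleanest route is to fix attention on one representative coefficient (say $\tau(x_1)\tau(x_3)$), write out every contribution to it from the expanded left- and right-hand sides, and show their difference is a Jacobi identity; the remaining coefficients then follow by the symmetry of the computation. Since this is a known construction (see \cite{AMS0,AKMS}), I would present the skew-symmetry in full and then indicate the fundamental identity reduces to the Jacobi identity of $\mathfrak{g}$ after the trace terms vanish, leaving the detailed term-matching as a direct verification.
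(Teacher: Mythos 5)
Your proposal is correct: the paper itself states this lemma without proof (it is recalled from \cite{AMS0,AKMS}), and your direct verification is exactly the standard argument — skew-symmetry from that of $[\cdot,\cdot]_{\mathfrak{g}}$, the trace condition $\tau([\cdot,\cdot]_{\mathfrak{g}})=0$ killing every term where $\tau$ hits a nested bracket, and the remaining identity sorted by the coefficients $\tau(x_i)\tau(x_j)$. One small simplification to your bookkeeping: neither side produces any $\tau(x_1)\tau(x_2)$ terms, the classes with both indices in $\{3,4,5\}$ cancel in pairs on the right-hand side without invoking Jacobi (they are all multiples of $[x_k,[x_1,x_2]_{\mathfrak{g}}]_{\mathfrak{g}}$), and each mixed class $\tau(x_1)\tau(x_j)$ or $\tau(x_2)\tau(x_j)$ with $j\in\{3,4,5\}$ matches via a single application of the Jacobi identity, so no "combining several Jacobi relations" is needed.
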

\begin{pro}[\cite{Kitouni}]
Let $(V,\rho)$ be a representation of a Lie algebra $(\mathfrak{g},[\cdot,\cdot]_{\mathfrak{g}})$ and $\tau$ be a trace map on  $\mathfrak{g}$. Then $(V,\rho_{\tau})$ is a representation of the induced $3$-Lie algebra $(\mathfrak{g},[\cdot,\cdot,\cdot]_{\tau})$, where $\rho_{\tau}:\mathfrak{g}\wedge \mathfrak{g}\rightarrow gl(V)$ is defined by
\begin{equation}\label{eq:induced2}
\rho_{\tau}(x,y)=\tau(x)\rho(y)-\tau(y)\rho(x),\;\forall x,y\in \mathfrak{g}.
\end{equation}
\end{pro}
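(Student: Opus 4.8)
The plan is to check directly the two identities that define a representation of a $3$-Lie algebra, namely \eqref{eq:rep} together with the second condition displayed immediately after it, for the induced bracket $[\cdot,\cdot,\cdot]_\tau$ of \eqref{eq:induced1} and the map $\rho_\tau$ of \eqref{eq:induced2}. Two structural facts will do all the work. Since $(V,\rho)$ is a representation of $(\mathfrak{g},[\cdot,\cdot]_\mathfrak{g})$, we have $\rho([x,y]_\mathfrak{g})=\rho(x)\rho(y)-\rho(y)\rho(x)$ for all $x,y\in\mathfrak{g}$; and since $\tau$ is a trace map, $\tau([x,y]_\mathfrak{g})=0$. Feeding the latter into \eqref{eq:induced1} yields the decisive simplification
\begin{equation*}
\tau([x_1,x_2,x_3]_\tau)=\tau(x_1)\tau([x_2,x_3]_\mathfrak{g})+\tau(x_2)\tau([x_3,x_1]_\mathfrak{g})+\tau(x_3)\tau([x_1,x_2]_\mathfrak{g})=0.
\end{equation*}

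Using this, every term of the shape $\rho_\tau([x_1,x_2,x_3]_\tau,x_4)$ collapses, since by definition of $\rho_\tau$
\begin{equation*}
\rho_\tau([x_1,x_2,x_3]_\tau,x_4)=\tau([x_1,x_2,x_3]_\tau)\rho(x_4)-\tau(x_4)\rho([x_1,x_2,x_3]_\tau)=-\tau(x_4)\rho([x_1,x_2,x_3]_\tau),
\end{equation*}
and I then expand $\rho([x_1,x_2,x_3]_\tau)$ through \eqref{eq:induced1} and the first structural fact into a $\tau$-weighted sum of commutators $[\rho(x_i),\rho(x_j)]$. The last ingredient is to expand each composition $\rho_\tau(x_i,x_j)\rho_\tau(x_k,x_l)$, which splits into four monomials of the form $\tau(x_a)\tau(x_b)\,\rho(x_c)\rho(x_d)$. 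With these reductions the first representation identity becomes the statement that the commutator $[\rho_\tau(x_1,x_2),\rho_\tau(x_3,x_4)]$ equals $-\tau(x_4)\rho([x_1,x_2,x_3]_\tau)+\tau(x_3)\rho([x_1,x_2,x_4]_\tau)$, and this matches term by term once the commutators $[\rho(x_i),\rho(x_j)]$ are rewritten as $\rho([x_i,x_j]_\mathfrak{g})$. For the second identity, the three products on the right produce twelve monomials; the six whose scalar coefficient does not involve $x_4$ cancel in pairs, and the remaining six, all weighted by $\tau(x_4)$, reassemble precisely into $-\tau(x_4)\rho([x_1,x_2,x_3]_\tau)$, which is the left-hand side.

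I do not anticipate any conceptual difficulty: the verification is entirely mechanical once the vanishing of $\tau$ on brackets is exploited. The only real obstacle is organizational, namely keeping track of the four monomials coming from each product $\rho_\tau\rho_\tau$ and confirming that the antisymmetry of $[\cdot,\cdot]_\mathfrak{g}$ together with $\tau([\cdot,\cdot]_\mathfrak{g})=0$ forces the required pairwise cancellations. A convenient way to carry this out is to sort every monomial by its pair of $\tau$-coefficients, dispose of the coefficients that cannot survive, and match what is left; since this is exactly the computation of \cite{Kitouni}, I would display only the two reductions above and leave the final term matching as a short check.
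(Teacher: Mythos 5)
Your verification is correct: the two reductions you isolate --- $\tau$ vanishes on $[\cdot,\cdot]_{\mathfrak{g}}$ and hence on $[\cdot,\cdot,\cdot]_\tau$, and $\rho$ intertwines Lie brackets with commutators --- are exactly what makes the term-by-term matching work, and I checked that the monomials do cancel and reassemble as you claim for both representation identities. Note that the paper itself gives no proof of this proposition (it is imported by citation from \cite{Kitouni}), so there is no in-text argument to compare against; your direct computation is the natural one and could serve as the omitted proof.
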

Recall that $\Theta \in \mathfrak{C}_{CE}^2(\mathfrak{g},V)$ is a $2$-cocycle with respect to Chevalley-Eilenberg cohomology of the Lie algebra $(\mathfrak{g},[\cdot,\cdot]_{\mathfrak{g}})$ with values in a $\mathfrak{g}$-module $V$ means $\Theta : \mathfrak{g} \wedge \mathfrak{g} \rightarrow V$ is a skew-symmetric bilinear map satisfying, for any $x,y,z\in \mathfrak{g},
$\begin{align}\label{eq:teta}
&\rho(x) \Theta(y,z)+\rho(y) \Theta(z,x)+\rho(z) \Theta(x,y)+\Theta(x,[y,z]_{\mathfrak{g}})+\Theta(y,[z,x]_{\mathfrak{g}})+\Theta(z,[x,y]_{\mathfrak{g}})=0.
\end{align}
\begin{pro}\label{eq:induced3}
Let $\Theta \in \mathfrak{C}_{CE}^{2}(\mathfrak{g},V)$ be a $2$-cocycle with respect to Chevalley-Eilenberg cohomology of $(\mathfrak{g},[\cdot,\cdot]_{\mathfrak{g}})$ with coefficients
in $V$ and $\tau$ be a trace map on $\mathfrak{g}$. Then $\Theta_{\tau}(x,y,z)=\circlearrowleft_{x,y,z}\tau(x)\Theta(y,z)$ is a $2$-cocycle
of the induced $3$-Lie algebra with coefficients in $(V,\rho_{\tau})$.
\end{pro}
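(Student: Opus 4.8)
The plan is to verify directly that the pair $(\Theta_\tau,\rho_\tau)$ satisfies the defining $2$-cocycle identity \eqref{eq:cocycle}, this time read for the induced $3$-Lie algebra $(\mathfrak{g},[\cdot,\cdot,\cdot]_\tau)$ of \eqref{eq:induced1} with coefficients in the representation $(V,\rho_\tau)$ of \eqref{eq:induced2}. As a preliminary I would record that $\Theta_\tau$ is a genuine $2$-cochain, i.e. an element of $\mathrm{Hom}(\wedge^{3}\mathfrak{g},V)$: since $\Theta$ is skew-symmetric and $\Theta_\tau(x,y,z)=\circlearrowleft_{x,y,z}\tau(x)\Theta(y,z)$ is a cyclic sum, interchanging two arguments and applying $\Theta(a,b)=-\Theta(b,a)$ gives $\Theta_\tau(y,x,z)=-\Theta_\tau(x,y,z)$, so $\Theta_\tau$ is totally skew-symmetric. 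Throughout I may assume that $[\cdot,\cdot,\cdot]_\tau$ is a $3$-Lie bracket and that $(V,\rho_\tau)$ is a representation, since both were established in the Lemma and Proposition preceding the statement.

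The key simplification I would exploit is that $\tau$ annihilates every ternary bracket: from \eqref{eq:induced1} and the trace condition $\tau([a,b]_\mathfrak{g})=0$ one gets $\tau([a,b,c]_\tau)=0$ for all $a,b,c\in\mathfrak{g}$. Hence whenever $\Theta_\tau$ is evaluated with a bracket $[\cdot,\cdot,\cdot]_\tau$ in one of its slots, the cyclic summand carrying $\tau$ of that bracket drops out, so each such $\Theta_\tau$ contributes only two summands instead of three. After substituting \eqref{eq:induced1}, \eqref{eq:induced2} and the definition of $\Theta_\tau$ into the eight terms of \eqref{eq:cocycle} and applying this vanishing, the entire left-hand side becomes a sum of terms each carrying exactly two scalar factors $\tau(\cdot)$, the remaining factor being built from a single $\rho(\cdot)$ and a single $\Theta$ evaluated on elements and ordinary Lie brackets of $\mathfrak{g}$ (no iterated $\rho$-composites appear).

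I would then collect the resulting terms according to which ordered pair of the five arguments $x_1,x_2,y_1,y_2,y_3$ carries the two surviving $\tau$-factors. Inspection shows that only monomials of the types $\tau(x_i)\tau(y_j)$ and $\tau(y_i)\tau(y_j)$ occur (a pure $\tau(x_1)\tau(x_2)$ can never arise, since the operator $\rho_\tau(x_1,x_2)$ and the first slots always contribute one $\tau(y)$). For a fixed monomial $\tau(x_i)\tau(y_j)$, after re-ordering the Lie brackets using only the skew-symmetry of $\Theta$, the collected coefficient matches verbatim the left-hand side of the Lie-algebra $2$-cocycle identity \eqref{eq:teta} for $\Theta$ on a suitable triple, hence vanishes by hypothesis; for a fixed monomial $\tau(y_i)\tau(y_j)$ the two contributing terms are $-\Theta(y_k,[x_1,x_2]_\mathfrak{g})-\Theta([x_1,x_2]_\mathfrak{g},y_k)$ together with $-\rho(y_k)\Theta(x_1,x_2)+\rho(y_k)\Theta(x_1,x_2)$, which cancel in pairs by skew-symmetry of $\Theta$. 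Summing over all monomials gives $0$, which is exactly \eqref{eq:cocycle} for $(\Theta_\tau,\rho_\tau)$.

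The main obstacle is purely organizational: the bookkeeping of the sizeable number of expanded terms and the correct identification of each $\tau$-homogeneous block with an instance of \eqref{eq:teta} or with a skew-symmetry cancellation. I note that a cleaner conceptual route would be to exhibit the assignment $\Theta\mapsto\Theta_\tau$ as part of a cochain map from the Chevalley-Eilenberg complex of $(\mathfrak{g},[\cdot,\cdot]_\mathfrak{g})$ to that of $(\mathfrak{g},[\cdot,\cdot,\cdot]_\tau)$, from which closedness of $\Theta_\tau$ would follow at once from closedness of $\Theta$; but the direct computation above proves the statement without developing that extra machinery.
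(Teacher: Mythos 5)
Your proposal is correct and follows essentially the same route as the paper: a direct expansion of the $3$-Lie $2$-cocycle identity for $(\Theta_\tau,\rho_\tau)$ via \eqref{eq:induced1}--\eqref{eq:induced2}, reduced to the Lie-algebra cocycle identity \eqref{eq:teta}. Your bookkeeping refinements (using $\tau([\cdot,\cdot,\cdot]_\tau)=0$ to drop summands, and sorting the expansion into $\tau(x_i)\tau(y_j)$ blocks that match \eqref{eq:teta} and $\tau(y_i)\tau(y_j)$ blocks that cancel by skew-symmetry) are accurate and in fact organize the paper's term-by-term computation more transparently, but they do not constitute a different argument.
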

\begin{proof}
Let $\Theta \in \mathfrak{C}_{CE}^{2}(\mathfrak{g},V)$ be a $2$-cocycle and $\tau$ be a trace map, $(V,\rho_{\tau})$ be a representation of the induced $3$-Lie algebra, and let $\Theta_{\tau}(x,y,z)=\circlearrowleft_{x,y,z\in \mathfrak{g}}\tau(x)\Theta(y,z)$. Then we have
\begin{align*}
&\quad \Theta_{\tau}(x_1,x_2,[y_1,y_2,y_3]_{\tau})+\rho_{\tau}(x_1,x_2)\Theta_{\tau}(y_1,y_2,y_3)-
\Theta_{\tau}([x_1,x_2,y_1]_{\tau},y_2,y_3)\\
&-\Theta_{\tau}(y_1,[x_1,x_2,y_2]_{\tau},y_3)-\Theta_{\tau}(y_1,y_2,[x_1,x_2,y_3]_{\tau})-\rho_{\tau}(y_2,y_3)\Theta_{\tau}(x_1,x_2,y_1)\\
&-\rho_{\tau}(y_3,y_1)\Theta_{\tau}(x_1,x_2,y_2)-\rho_{\tau}(y_1,y_2)\Theta_{\tau}(x_1,x_2,y_3)\\
^{\eqref{eq:induced1}+\eqref{eq:induced2}}&=\tau(x_1)\tau(y_1)\Theta(x_2,[y_2,y_3]_{\mathfrak{g}})
+\tau(x_1)\tau(y_2)\Theta(x_2,[y_3,y_1]_{\mathfrak{g}})+\tau(x_1)\tau(y_3)\Theta(x_2,[y_1,y_2]_{\mathfrak{g}})\\
&+\tau(x_2)\tau(y_1)\Theta([y_2,y_3]_{\mathfrak{g}},x_1)+\tau(x_2)\tau(y_2)\Theta([y_3,y_1]_{\mathfrak{g}},x_1)
+\tau(x_2)\tau(y_3)\Theta([y_1,y_2]_{\mathfrak{g}},x_1)\\
&+\tau(x_1)\tau(y_1)\rho(x_2)\Theta(y_2,y_3)+\tau(x_1)\tau(y_2)\rho(x_2)\Theta(y_3,y_1)+\tau(x_1)\tau(y_3)\rho(x_2)\Theta(y_1,y_2)\\
&-\tau(x_2)\tau(y_1)\rho(x_1)\Theta(y_2,y_3)-\tau(x_2)\tau(y_2)\rho(x_1)\Theta(y_3,y_1)-\tau(x_2)\tau(y_3)\rho(x_1)\Theta(y_1,y_2)\\
&-\tau(y_2)\tau(x_1)\Theta(y_3,[x_2,y_1]_{\mathfrak{g}})
-\tau(y_2)\tau(x_2)\Theta(y_3,[y_1,x_1]_{\mathfrak{g}})-\tau(y_2)\tau(y_1)\Theta(y_3,[x_1,x_2]_{\mathfrak{g}})\\
&-\tau(y_3)\tau(x_1)\Theta([x_2,y_1]_{\mathfrak{g}},y_2)-\tau(y_3)\tau(x_2)\Theta([y_1,x_1]_{\mathfrak{g}},y_2)
-\tau(y_3)\tau(y_1)\Theta([x_1,x_2]_{\mathfrak{g}},y_2)\\
&-\tau(y_1)\tau(x_1)\Theta([x_2,y_2]_{\mathfrak{g}},y_3)-\tau(y_1)\tau(x_2)\Theta([y_2,x_1]_{\mathfrak{g}},y_3)
-\tau(y_1)\tau(y_2)\Theta([x_1,x_2]_{\mathfrak{g}},y_3)\\
&-\tau(y_3)\tau(x_1)\Theta(y_1,[x_2,y_2]_{\mathfrak{g}})
-\tau(y_3)\tau(x_2)\Theta(y_1,[y_2,x_1]_{\mathfrak{g}})-\tau(y_3)\tau(y_2)\Theta(y_1,[x_1,x_2]_{\mathfrak{g}})\\
&-\tau(y_1)\tau(x_1)\Theta(y_2,[x_2,y_3]_{\mathfrak{g}})
-\tau(y_1)\tau(x_2)\Theta(y_2,[y_3,x_1]_{\mathfrak{g}})-\tau(y_1)\tau(y_3)\Theta(y_2,[x_1,x_2]_{\mathfrak{g}})\\
&-\tau(y_2)\tau(x_1)\Theta([x_2,y_3]_{\mathfrak{g}},y_1)-\tau(y_2)\tau(x_2)\Theta([y_3,x_1]_{\mathfrak{g}},y_2)
-\tau(y_2)\tau(y_3)\Theta([x_1,x_2]_{\mathfrak{g}},y_2)\\
&-\tau(y_2)\tau(x_1)\rho(y_3)\Theta(x_2,y_1)-\tau(y_2)\tau(x_2)\rho(y_3)\Theta(y_1,x_1)-\tau(y_2)\tau(y_1)\rho(y_3)\Theta(x_1,x_2)\\
&+\tau(y_3)\tau(x_1)\rho(y_2)\Theta(x_2,y_1)+\tau(y_3)\tau(x_2)\rho(y_2)\Theta(y_1,x_1)+\tau(y_3)\tau(y_1)\rho(y_2)\Theta(x_1,x_2)\\
&-\tau(y_3)\tau(x_1)\rho(y_1)\Theta(x_2,y_2)-\tau(y_3)\tau(x_2)\rho(y_1)\Theta(y_2,x_1)-\tau(y_3)\tau(y_2)\rho(y_1)\Theta(x_1,x_2)\\
&+\tau(y_1)\tau(x_1)\rho(y_3)\Theta(x_2,y_2)+\tau(y_1)\tau(x_2)\rho(y_3)\Theta(y_2,x_1)+\tau(y_1)\tau(y_2)\rho(y_3)\Theta(x_1,x_2)\\
&-\tau(y_1)\tau(x_1)\rho(y_2)\Theta(x_2,y_3)-\tau(y_1)\tau(x_2)\rho(y_2)\Theta(y_3,x_1)-\tau(y_1)\tau(y_3)\rho(y_2)\Theta(x_1,x_2)\\
&+\tau(y_2)\tau(x_1)\rho(y_1)\Theta(x_2,y_3)+\tau(y_2)\tau(x_2)\rho(y_1)\Theta(y_3,x_1)+\tau(y_2)\tau(y_3)\rho(y_1)\Theta(x_1,x_2)\\
^{\eqref{eq:teta}}&= 0.
\end{align*}
\end{proof}
Let $(\mathfrak{g},[\cdot,\cdot]_{\mathfrak{g}})$ be a Lie algebra and $(V,\rho)$ be a representation. Recall from \cite{Das1}  that a $\Theta$-twisted
$\mathcal{O}$-operator on $\mathfrak{g}$ is a linear map $T:V\rightarrow \mathfrak{g}$ satisfying
\begin{equation}\label{eq:tetatwist}
[Tu,Tv]_\mathfrak{g}=T\Big(\rho(Tu)v- \rho(Tv)u +\Theta(Tu,Tv)\Big),\;\forall u,v\in V.
\end{equation}
\begin{thm}
Let $T: V\rightarrow \mathfrak{g}$ be a $\Theta$-twisted $\mathcal{O}$-operator on $(\mathfrak{g},[\cdot,\cdot]_{\mathfrak{g}})$ and $\tau$ be a trace map. Then $T: V\rightarrow \mathfrak{g}$ is a $\Theta_{\tau}$-twisted $\mathcal{O}$-operator on the induced $3$-Lie algebra.
\end{thm}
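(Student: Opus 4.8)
The plan is to verify directly the defining identity \eqref{eq:twisted} of a twisted $\mathcal{O}$-operator for the induced data, namely to show
\[
[Tu,Tv,Tw]_\tau = T\big(\rho_\tau(Tu,Tv)w + \rho_\tau(Tv,Tw)u + \rho_\tau(Tw,Tu)v + \Theta_\tau(Tu,Tv,Tw)\big),
\]
for all $u,v,w\in V$, by expanding both sides and matching them term by term. The key feature that makes this tractable is that the trace values $\tau(Tu)$, $\tau(Tv)$, $\tau(Tw)$ are scalars, hence commute with the linear map $T$ and factor out of every expression.

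First I would expand the left-hand side by the definition \eqref{eq:induced1} of the induced bracket, getting $[Tu,Tv,Tw]_\tau = \tau(Tu)[Tv,Tw]_\mathfrak{g} + \tau(Tv)[Tw,Tu]_\mathfrak{g} + \tau(Tw)[Tu,Tv]_\mathfrak{g}$. Applying the binary twisted $\mathcal{O}$-operator identity \eqref{eq:tetatwist} to each of these three Lie brackets and pulling the scalars inside $T$ by linearity yields
\begin{align*}
[Tu,Tv,Tw]_\tau = T\Big(&\tau(Tu)\big(\rho(Tv)w - \rho(Tw)v + \Theta(Tv,Tw)\big)\\
&+ \tau(Tv)\big(\rho(Tw)u - \rho(Tu)w + \Theta(Tw,Tu)\big)\\
&+ \tau(Tw)\big(\rho(Tu)v - \rho(Tv)u + \Theta(Tu,Tv)\big)\Big).
\end{align*}

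Next I would expand the right-hand side, substituting $\rho_\tau(x,y)=\tau(x)\rho(y)-\tau(y)\rho(x)$ from \eqref{eq:induced2} into the three $\rho_\tau$-terms and $\Theta_\tau(x,y,z)=\tau(x)\Theta(y,z)+\tau(y)\Theta(z,x)+\tau(z)\Theta(x,y)$ from Proposition \ref{eq:induced3} into the cocycle term. This produces six $\rho$-contributions and three $\Theta$-contributions; regrouping all nine according to their common scalar factor $\tau(Tu)$, $\tau(Tv)$, or $\tau(Tw)$ reproduces exactly the three bracketed blocks appearing inside $T$ above, so the two sides agree.

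I expect no genuine obstacle here: the argument is a bookkeeping check, and the only thing to watch is the correct pairing of signs coming from the cyclic arrangement in $\rho_\tau$ and $\Theta_\tau$ against the skew-symmetric binary condition \eqref{eq:tetatwist}. In effect, the induced ternary twisted operator identity collapses, coefficient by coefficient in $\tau(Tu)$, $\tau(Tv)$, $\tau(Tw)$, onto three copies of the original binary identity, which is what guarantees the equality.
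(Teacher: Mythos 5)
Your proposal is correct and follows essentially the same route as the paper: both expand $[Tu,Tv,Tw]_\tau$ via \eqref{eq:induced1}, expand the right-hand side via \eqref{eq:induced2} and the formula for $\Theta_\tau$, and reduce the equality to three copies of the binary identity \eqref{eq:tetatwist} grouped by the scalars $\tau(Tu)$, $\tau(Tv)$, $\tau(Tw)$. The sign bookkeeping in your grouping is accurate, so nothing is missing.
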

\begin{proof}
For any $u,v,w\in V$, we have
\begin{align*}
&[Tu,Tv,Tw]_{\tau}=^{\eqref{eq:induced1}}(\tau \circ T)(u)[Tu,Tw]_\mathfrak{g}+(\tau \circ T)(v)[Tw,Tu]_\mathfrak{g}+(\tau \circ T)(w)[Tu,Tv]_\mathfrak{g}.
\end{align*}
On the other hand, we have
\begin{align*}
& T\Big(\rho_{\tau}(Tu,Tv)w+\rho_{\tau}(Tv,Tw)u+\rho_{\tau}(Tw,Tu)v+\Theta_{\tau}(Tu,Tv,Tw)\Big)\\
&=T\Big((\tau \circ T)(u)\rho(Tv)w-(\tau \circ T)(v)\rho(Tu)w+(\tau \circ T)(v)\rho(Tw)u\\
&-(\tau \circ T)(w)\rho(Tv)u+(\tau \circ T)(w)\rho(Tu)v-(\tau \circ T)(u)\rho(Tw)v\\
&+(\tau \circ T)(u)\Theta(Tv,Tw)+(\tau \circ T)(v)\Theta(Tw,Tu)+(\tau \circ T)(w)\Theta(Tu,Tv)\Big).
\end{align*}
Thus  by Eq. \eqref{eq:tetatwist} we have
\begin{align*}
&[Tu,Tv,Tw]_{\tau}=T\Big(\rho_{\tau}(Tu,Tv)w+\rho_{\tau}(Tv,Tw)u+\rho_{\tau}(Tw,Tu)v+\Theta_{\tau}(Tu,Tv,Tw)\Big).
\end{align*}
Hence $T$ defines a $\Theta_{\tau}$-twisted $\mathcal{O}$-operator on the induced $3$-Lie algebra.
\end{proof}
\begin{defi}[\cite{Das1}]
An NS-Lie algebra is a vector space $A$ together with bilinear operations $\{\cdot,\cdot\},[\![\cdot,\cdot]\!] : A\times A \rightarrow A$
in which $[\![\cdot,\cdot]\!]$ is skew-symmetric and satisfying the following two identities
\begin{equation}\label{eq:nslie1}
\{\{x,y\},z\}-\{x,\{y,z\}\}-\{\{y,x\},z\}+\{y,\{x,z\}\}+\{[\![x,y]\!],z\}=0,
\end{equation}
\begin{equation}\label{eq:nslie2}
[\![x,[y ,z]_{\ast}]\!]+[\![y,[z ,x]_{\ast}]\!]+[\![z,[x ,y]_{\ast}]\!]+\{x,[\![y,z]\!]\}+\{y,[\![z,x]\!]\}+\{z,[\![x,y]\!]\}=0,
\end{equation}
for all $x,y,z\in A$, where $[x ,y]_{\ast}=\{x,y\}-\{y,x\}+[\![x,y]\!]$, for $x,y\in A$.
\end{defi}
\begin{pro}[\cite{Das1}]\label{dass}
Let $\Theta \in \mathfrak{C}_{CE}^{2}(\mathfrak{g},V)$ be a $2$-cocycle in the Chevalley-Eilenberg cohomology of $(\mathfrak{g},[\cdot,\cdot]_{\mathfrak{g}})$ with coefficients
in $(V,\rho)$. Let $T:V \rightarrow \mathfrak{g}$ be a $\Theta$-twisted $\mathcal{O}$-operator. Then
\begin{equation}
    \{u,v\}=\rho(Tu)v \quad \quad and \quad \quad  [\![u,v]\!]=\Theta(Tu,Tv),\;\forall u,v\in V,
\end{equation}
defines an NS-Lie algebra on $V$.
\end{pro}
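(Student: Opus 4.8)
The plan is to verify the two defining axioms \eqref{eq:nslie1} and \eqref{eq:nslie2} of an NS-Lie algebra directly, following the same strategy used for the ternary analogue in Proposition \ref{eq:onV}. First I would record the two structural facts that drive everything. The skew-symmetry of $[\![\cdot,\cdot]\!]$ is immediate, since $[\![u,v]\!]=\Theta(Tu,Tv)$ and $\Theta$, being a $2$-cocycle, is skew-symmetric. Next, expanding the subadjacent bracket gives
\[
[u,v]_\ast=\{u,v\}-\{v,u\}+[\![u,v]\!]=\rho(Tu)v-\rho(Tv)u+\Theta(Tu,Tv),
\]
so the defining equation \eqref{eq:tetatwist} of a $\Theta$-twisted $\mathcal{O}$-operator reads precisely $T[u,v]_\ast=[Tu,Tv]_\mathfrak{g}$; that is, $T$ intertwines $[\cdot,\cdot]_\ast$ on $V$ with $[\cdot,\cdot]_\mathfrak{g}$ on $\mathfrak{g}$. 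These two observations are the only inputs needed.

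For the first axiom, I would rewrite every term of \eqref{eq:nslie1} through $\rho$: for instance $\{\{x,y\},z\}=\rho(T\rho(Tx)y)z$, $\{x,\{y,z\}\}=\rho(Tx)\rho(Ty)z$, and $\{[\![x,y]\!],z\}=\rho(T\Theta(Tx,Ty))z$. Collecting the three terms whose inner argument is hit by $T$ yields $\rho\big(T(\rho(Tx)y-\rho(Ty)x+\Theta(Tx,Ty))\big)z$, which by the $\mathcal{O}$-operator identity \eqref{eq:tetatwist} equals $\rho([Tx,Ty]_\mathfrak{g})z$; the two remaining terms assemble into $-\rho(Tx)\rho(Ty)z+\rho(Ty)\rho(Tx)z$. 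The axiom then follows at once from the representation property $\rho([Tx,Ty]_\mathfrak{g})=\rho(Tx)\rho(Ty)-\rho(Ty)\rho(Tx)$.

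For the second axiom, I would use $T[y,z]_\ast=[Ty,Tz]_\mathfrak{g}$ to turn each bracket $[\![x,[y,z]_\ast]\!]=\Theta(Tx,T[y,z]_\ast)$ into $\Theta(Tx,[Ty,Tz]_\mathfrak{g})$, and rewrite each term $\{x,[\![y,z]\!]\}$ as $\rho(Tx)\Theta(Ty,Tz)$. The left-hand side of \eqref{eq:nslie2} then becomes exactly the cyclic sum
\[
\circlearrowleft_{Tx,Ty,Tz}\big(\rho(Tx)\Theta(Ty,Tz)+\Theta(Tx,[Ty,Tz]_\mathfrak{g})\big),
\]
which vanishes by the Chevalley-Eilenberg $2$-cocycle condition \eqref{eq:teta} evaluated at $Tx,Ty,Tz$. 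Since there is no genuine obstacle here---the computation is linear and each combination resolves via one of the three identities (skew-symmetry of $\Theta$, the $\mathcal{O}$-operator equation, the cocycle equation)---the only real care needed is bookkeeping: making sure the terms carrying $T$ on their inner slot are grouped correctly so that \eqref{eq:tetatwist} can be invoked. This completes the verification.
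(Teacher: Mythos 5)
Your proof is correct and complete: the paper itself states this proposition without proof (citing Das), but your verification is exactly the standard argument and parallels the paper's own proof of the ternary analogue (Proposition \ref{eq:onV}) — axiom \eqref{eq:nslie1} reduces via \eqref{eq:tetatwist} to the representation property $\rho([Tx,Ty]_\mathfrak{g})=\rho(Tx)\rho(Ty)-\rho(Ty)\rho(Tx)$, and axiom \eqref{eq:nslie2} becomes the cocycle identity \eqref{eq:teta} evaluated at $Tx,Ty,Tz$. The only nitpick is that the skew-symmetry of $[\![\cdot,\cdot]\!]$ follows from $\Theta$ being a skew-symmetric $2$-cochain (an element of $\mathrm{Hom}(\mathfrak{g}\wedge\mathfrak{g},V)$), not from the cocycle condition itself.
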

\begin{thm}\label{indns}
Let $(A,\{\cdot,\cdot\},[\![\cdot,\cdot]\!])$ be an NS-Lie algebra and $\tau:A\rightarrow \mathbb{K}$ be a trace map, that is a linear form
satisfying \begin{equation}\label{eq:tauu}
    \tau([x ,y]_{\ast})=0,\; \forall x,y\in A. \end{equation} Define two  $3$-ary brackets by
\begin{equation}
\{x,y,z\}_{\tau}=\tau(x)\{y,z\}-\tau(y)\{x,z\},\end{equation}\label{eq:indu1}
\begin{equation}[\![x,y,z]\!]_{\tau}=\circlearrowleft_{x,y,z}\tau(x)[\![y,z]\!].\end{equation}\label{eq:indu2}
Then $(A,\{\cdot,\cdot,\cdot\}_{\tau},[\![\cdot,\cdot,\cdot]\!]_{\tau})$ is a $3$-NS-Lie algebra.
\end{thm}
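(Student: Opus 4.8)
The plan is to prove this functorially rather than by a direct check of the five defining identities \eqref{eq:Ns}--\eqref{eq:NS3}, assembling the three results of the preceding pages into a single chain. The key preliminary observation is that every NS-Lie algebra is encoded by a twisted $\mathcal{O}$-operator on its subadjacent Lie algebra: putting $L(x)y=\{x,y\}$ and $\Theta(x,y)=[\![x,y]\!]$, the pair $(A,L)$ is a representation of $(A,[\cdot,\cdot]_*)$, the bilinear map $\Theta$ is a Chevalley--Eilenberg $2$-cocycle, and the identity $Id:A\to A$ is a $\Theta$-twisted $\mathcal{O}$-operator on $(A,[\cdot,\cdot]_*)$ (the binary analogue of the proposition preceding Proposition \ref{eq:onV}, from \cite{Das1}). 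Its verification is immediate: the $\mathcal{O}$-operator equation for $Id$ reduces to the definition $[x,y]_*=\{x,y\}-\{y,x\}+[\![x,y]\!]$, the representation axiom is exactly \eqref{eq:nslie1}, and the $2$-cocycle condition is exactly \eqref{eq:nslie2}.

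First I would observe that the hypothesis \eqref{eq:tauu}, namely $\tau([x,y]_*)=0$, says precisely that $\tau$ is a trace map on the subadjacent Lie algebra $(A,[\cdot,\cdot]_*)$. I may therefore apply the theorem on induced $\mathcal{O}$-operators (the one immediately preceding this statement) to the $\Theta$-twisted $\mathcal{O}$-operator $Id:A\to A$. It yields that $Id:A\to A$ is a $\Theta_\tau$-twisted $\mathcal{O}$-operator on the induced $3$-Lie algebra $(A,[\cdot,\cdot,\cdot]_\tau)$ with respect to the representation $(A,\rho_\tau)$ of \eqref{eq:induced2}, where $\rho_\tau(x,y)=\tau(x)L(y)-\tau(y)L(x)$, and with the induced $2$-cocycle $\Theta_\tau(x,y,z)=\circlearrowleft_{x,y,z}\tau(x)\Theta(y,z)$ supplied by Proposition \ref{eq:induced3}.

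Next I would feed this $\Theta_\tau$-twisted $\mathcal{O}$-operator into Proposition \ref{eq:onV} with $T=Id$. That proposition endows $A$ with a $3$-NS-Lie algebra structure whose operations are $\rho_\tau(x,y)z$ and $\Theta_\tau(x,y,z)$. It then remains only to identify these with the brackets in the statement: by \eqref{eq:induced2}, $\rho_\tau(x,y)z=\tau(x)L(y)z-\tau(y)L(x)z=\tau(x)\{y,z\}-\tau(y)\{x,z\}$, which is $\{x,y,z\}_\tau$, while $\Theta_\tau(x,y,z)=\tau(x)[\![y,z]\!]+\tau(y)[\![z,x]\!]+\tau(z)[\![x,y]\!]$ is $[\![x,y,z]\!]_\tau$. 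Hence $(A,\{\cdot,\cdot,\cdot\}_\tau,[\![\cdot,\cdot,\cdot]\!]_\tau)$ is exactly the $3$-NS-Lie algebra produced by Proposition \ref{eq:onV}, which proves the claim.

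I expect the only delicate point to be the bookkeeping of this last identification --- matching the operations manufactured abstractly by Proposition \ref{eq:onV} with the explicitly defined $\{\cdot,\cdot,\cdot\}_\tau$ and $[\![\cdot,\cdot,\cdot]\!]_\tau$, in particular the cyclic sum in the second bracket. The alternative of verifying \eqref{eq:Ns}--\eqref{eq:NS3} directly from \eqref{eq:nslie1}, \eqref{eq:nslie2} and \eqref{eq:tauu} is possible but far heavier; the genuinely laborious case there would be the mixed identity \eqref{eq:NS3}, which entangles both operations with the cyclic trace terms, so the conceptual route is clearly preferable.
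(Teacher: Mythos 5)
Your argument is correct, but it takes a genuinely different route from the paper's. The paper proves Theorem \ref{indns} by direct verification: it checks the two skew-symmetry conditions and then expands identity \eqref{eq:NS2} in full, reducing it to \eqref{eq:nslie1} together with the trace condition \eqref{eq:tauu}, and disposes of \eqref{eq:NS1} and \eqref{eq:NS3} with a ``similarly''. You instead factor the statement through the machinery already assembled: the NS-Lie structure is repackaged as the $\Theta$-twisted $\mathcal{O}$-operator $Id$ on the subadjacent Lie algebra $(A,[\cdot,\cdot]_*)$ --- and your dictionary is accurate, since for $T=Id$ the operator identity \eqref{eq:tetatwist} is the definition of $[\cdot,\cdot]_*$, the representation axiom for $L$ is \eqref{eq:nslie1}, and the cocycle condition \eqref{eq:teta} for $\Theta$ is \eqref{eq:nslie2}; the hypothesis \eqref{eq:tauu} says exactly that $\tau$ is a trace map on $(A,[\cdot,\cdot]_*)$, so the theorem immediately preceding this one promotes $Id$ to a $\Theta_\tau$-twisted $\mathcal{O}$-operator on the induced $3$-Lie algebra (using Proposition \ref{eq:induced3} and the Kitouni proposition for the induced cocycle and representation); and Proposition \ref{eq:onV} with $T=Id$ then yields a $3$-NS-Lie structure whose operations $\rho_\tau(x,y)z$ and $\Theta_\tau(x,y,z)$ are on inspection precisely $\{x,y,z\}_\tau$ and $[\![x,y,z]\!]_\tau$. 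The only external input is the pair of binary facts from \cite{Das1} (that $[\cdot,\cdot]_*$ is a Lie bracket and that $Id$ is a twisted $\mathcal{O}$-operator on it), which you both cite and sketch. What your route buys is that all three heavy identities are obtained in one stroke, and it makes the commutative diagram in the paper's final proposition transparent (your proof is that diagram specialized to $T=Id$ and $\tau'=\tau$); what the paper's computation buys is self-containedness --- no reliance on the binary NS-Lie theory --- and an explicit record of where \eqref{eq:tauu} enters.
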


\begin{proof}
Let $x,y,z\in A$. We have
\begin{align*}
\{x,y,z\}_{\tau}=\tau(x)\{y,z\}-\tau(y)\{x,z\}=-\Big(\tau(y)\{x,z\}-\tau(x)\{y,z\}\Big)=-\{y,x,z\}_{\tau},
\end{align*}
and since $[\![\cdot,\cdot]\!]$ is skew-symmetric, we have
\begin{align*}
&[\![x,y,z]\!]_{\tau}=\tau(x)[\![y,z]\!]+\tau(y)[\![z,x]\!]+\tau(z)[\![x,y]\!]\\
&=-\Big(\tau(y)[\![x,z]\!]+\tau(x)[\![z,y]\!]+\tau(z)[\![y,x]\!]\Big)=-[\![y,x,z]\!]_{\tau}.
\end{align*}
Similarly, we get $[\![x,y,z]\!]_{\tau}=-[\![x,z,y]\!]_{\tau}$, then $[\![\cdot,\cdot,\cdot]\!]_{\tau}$ is skew-symmetric.
For $x_i\in A,\;1 \leq i \leq 5$, we have
\begin{align*}
& \{\{x_1,x_2,x_3\}_{\tau}^{C},x_4,x_5\}_{\tau}+\{[\![x_1,x_2,x_3]\!]_{\tau},x_4,x_5\}_{\tau}-\{x_1,x_2,\{x_3,x_4,x_5\}_{\tau}\}_{\tau}\\
&-\{x_2,x_3,\{x_1,x_4,x_5\}_{\tau}\}_{\tau}-\{x_3,x_1,\{x_2,x_4,x_5\}_{\tau}\}_{\tau}\\
^{\eqref{eq:indu1}+\eqref{eq:indu2}}&=\tau(x_1)\Big(\tau(\{x_2,x_3\})\{x_4,x_5\}-\tau(x_4)\{\{x_2,x_3\},x_5\}\Big)\\
&-\tau(x_2)\Big(\tau(\{x_1,x_3\})\{x_4,x_5\}-\tau(x_4)\{\{x_1,x_3\},x_5\}\Big)\\
&+\tau(x_2)\Big(\tau(\{x_3,x_1\})\{x_4,x_5\}-\tau(x_4)\{\{x_3,x_1\},x_5\}\Big)\\
&-\tau(x_3)\Big(\tau(\{x_2,x_1\})\{x_4,x_5\}-\tau(x_4)\{\{x_2,x_1\},x_5\}\Big)\\
&+\tau(x_3)\Big(\tau(\{x_1,x_2\})\{x_4,x_5\}-\tau(x_4)\{\{x_1,x_2\},x_5\}\Big)\\
&-\tau(x_1)\Big(\tau(\{x_3,x_2\})\{x_4,x_5\}-\tau(x_4)\{\{x_3,x_2\},x_5\}\Big)\\
&+\tau(x_1)\Big(\tau([\![x_2,x_3]\!])\{x_4,x_5\}-\tau(x_4)\{[\![x_2,x_3]\!],x_5\}\Big)\\
&+\tau(x_2)\Big(\tau([\![x_3,x_1]\!])\{x_4,x_5\}-\tau(x_4)\{[\![x_3,x_1]\!],x_5\}\Big)\\
&+\tau(x_3)\Big(\tau([\![x_1,x_2]\!])\{x_4,x_5\}-\tau(x_4)\{[\![x_1,x_2]\!],x_5\}\Big)\\
&-\tau(x_3)\tau(x_1)\{x_2,\{x_4,x_5\}\}+\tau(x_3)\tau(x_2)\{x_1,\{x_4,x_5\}\}\\
&+\tau(x_4)\tau(x_1)\{x_2,\{x_3,x_5\}\}-\tau(x_4)\tau(x_2)\{x_1,\{x_3,x_5\}\}\\
&-\tau(x_1)\tau(x_2)\{x_3,\{x_4,x_5\}\}+\tau(x_1)\tau(x_3)\{x_2,\{x_4,x_5\}\}\\
&+\tau(x_4)\tau(x_2)\{x_3,\{x_1,x_5\}\}-\tau(x_4)\tau(x_3)\{x_2,\{x_1,x_5\}\}\\
&-\tau(x_2)\tau(x_3)\{x_1,\{x_4,x_5\}\}+\tau(x_2)\tau(x_1)\{x_3,\{x_4,x_5\}\}\\
&+\tau(x_4)\tau(x_3)\{x_1,\{x_2,x_5\}\}-\tau(x_4)\tau(x_1)\{x_3,\{x_2,x_5\}\}\\
&=\;\tau(x_1)\Big(\tau([x_2 ,x_3]_{\ast})\Big)\{x_4,x_5\}+\tau(x_2)\Big(\tau([x_3 ,x_1]_{\ast})\Big)\{x_4,x_5\}+\tau(x_3)\Big(\tau([x_1 ,x_2]_{\ast})\Big)\{x_4,x_5\}\\
&-\tau(x_1)\tau(x_4)\Big(\{\{x_2,x_3\},x_5\}-\{x_2,\{x_3,x_5\}\}-\{\{x_3,x_2\},x_5\}+\{x_3,\{x_2,x_5\}\}+\{[\![x_2,x_3]\!],x_5\}\Big)\\
&-\tau(x_2)\tau(x_4)\Big(\{\{x_3,x_1\},x_5\}-\{x_3,\{x_1,x_5\}\}-\{\{x_1,x_3\},x_5\}+\{x_1,\{x_3,x_5\}\}+\{[\![x_3,x_1]\!],x_5\}\Big)\\
&-\tau(x_3)\tau(x_4)\Big(\{\{x_1,x_2\},x_5\}-\{x_1,\{x_2,x_5\}\}-\{\{x_2,x_1\},x_5\}+\{x_2,\{x_1,x_5\}\}+\{[\![x_1,x_2]\!],x_5\}\Big)\\
^{\eqref{eq:nslie1}+\eqref{eq:tauu}} &=\; 0.
\end{align*}
Then Eq. \eqref{eq:NS2} holds. Similarly, Eqs.\eqref{eq:NS1}, \eqref{eq:NS3} holds by Eqs.\eqref{eq:nslie1}-\eqref{eq:tauu}. This completes the proof.
\end{proof}
Let $(A,[\cdot,\cdot])$ be a Lie algebra and $(V,\rho)$ be a representation. We say that we have a Lie-Rep pair and refer to it with the tuple $(A,[\cdot,\cdot],V,\rho)$. We use similar notation for a 3-Lie algebra together with a representation.  The following proposition gives us two different ways to construct the same $3$-NS-Lie algebra structure on $V$ using a trace map or a $\Theta$-twisted
$\mathcal{O}$-operator. In particular, we obtain a  $3$-NS-Lie algebra on $A$ using the adjoint representation.
\begin{pro}
We have the following diagram
\begin{equation*}\label{Diagramme1}\begin{split}
\xymatrix{
\ar[rr] \mbox{ Lie-Rep pair $(A,[\cdot,\cdot],V,\rho)$ }\ar[d]_{\mbox{\small $\Theta$-twisted
$\mathcal{O}$-operator}}\ar[rr]^{\mbox{\quad\quad $\tau$\quad\quad }}
                && \mbox{ $3$-Lie-Rep pair 
            $(A,[\cdot,\cdot,\cdot]_{\tau},V,\rho_{\tau})$ }\ar[d]_{\mbox{\small $\Theta_{\tau}$-twisted
$\mathcal{O}$-operator}}\\
\ar[rr] \mbox{ NS-Lie algebra $(V,\{\cdot,\cdot\},[\![\cdot,\cdot]\!])$}\ar@<-1ex>[u]_{\small \mbox{\small $[\cdot,\cdot]_{\ast}$ }}\ar[rr]^{\mbox{\small $\tau \circ T$ \quad\quad}}
                && \mbox{ 3-NS-Lie algebra $(V,\{\cdot,\cdot,\cdot\}_{\tau},[\![\cdot,\cdot,\cdot]\!]_{\tau})$}\ar@<-1ex>[u]_{\mbox{\small $[\cdot,\cdot,\cdot]_{\ast}$}}}\end{split}
\end{equation*}
\end{pro}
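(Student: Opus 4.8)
The plan is to prove commutativity of the square by computing the two composite $3$-NS-Lie structures on $V$ and checking that they agree term by term. We start from a Lie-Rep pair $(A,[\cdot,\cdot],V,\rho)$ equipped with a $\Theta$-twisted $\mathcal{O}$-operator $T:V\to\mathfrak{g}$ and a trace map $\tau$ (here $\mathfrak{g}$ denotes the Lie algebra written $A$ in the diagram). The right-then-down route first induces the $3$-Lie-Rep pair via $\tau$, along which $T$ is a $\Theta_\tau$-twisted $\mathcal{O}$-operator, and then applies Proposition \ref{eq:onV}. The down-then-right route first builds the NS-Lie algebra on $V$ via Proposition \ref{dass} and then induces a $3$-NS-Lie structure through the trace $\tau\circ T$ via Theorem \ref{indns}.

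The essential preliminary step, and the only place where the hypotheses genuinely interact, is to check that $\tau\circ T$ is a legitimate trace map on the NS-Lie algebra $(V,\{\cdot,\cdot\},[\![\cdot,\cdot]\!])$, so that the bottom arrow is even defined. The subadjacent bracket of that NS-Lie algebra is $[u,v]_\ast=\{u,v\}-\{v,u\}+[\![u,v]\!]=\rho(Tu)v-\rho(Tv)u+\Theta(Tu,Tv)$, and the defining identity \eqref{eq:tetatwist} of the $\Theta$-twisted $\mathcal{O}$-operator gives precisely $T([u,v]_\ast)=[Tu,Tv]_\mathfrak{g}$. Hence $(\tau\circ T)([u,v]_\ast)=\tau([Tu,Tv]_\mathfrak{g})=0$ because $\tau$ annihilates brackets of $\mathfrak{g}$, so condition \eqref{eq:tauu} holds. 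This is the main point to get right; everything else is an unwinding of definitions.

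With this in hand, I would compare the two routes directly. Along the right-then-down route, Proposition \ref{eq:onV} gives $\{u,v,w\}=\rho_\tau(Tu,Tv)w$ and $[\![u,v,w]\!]=\Theta_\tau(Tu,Tv,Tw)$; substituting the definition \eqref{eq:induced2} of $\rho_\tau$ and the formula for $\Theta_\tau$ from Proposition \ref{eq:induced3} yields
\[
\{u,v,w\}=\tau(Tu)\rho(Tv)w-\tau(Tv)\rho(Tu)w,\qquad
[\![u,v,w]\!]=\circlearrowleft_{u,v,w}\tau(Tu)\Theta(Tv,Tw).
\]
Along the down-then-right route, Theorem \ref{indns} with trace $\tau\circ T$ produces $\{u,v,w\}_{\tau\circ T}=(\tau\circ T)(u)\{v,w\}-(\tau\circ T)(v)\{u,w\}$ and $[\![u,v,w]\!]_{\tau\circ T}=\circlearrowleft_{u,v,w}(\tau\circ T)(u)[\![v,w]\!]$; substituting $\{v,w\}=\rho(Tv)w$ and $[\![v,w]\!]=\Theta(Tv,Tw)$ from Proposition \ref{dass} gives exactly the same two expressions, since $(\tau\circ T)(u)=\tau(Tu)$. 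Thus the two $3$-NS-Lie structures coincide and the square commutes.

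Finally, the vertical subadjacent arrows are compatible essentially for free: the subadjacent $3$-Lie bracket $[\cdot,\cdot,\cdot]_\ast$ of the common lower-right $3$-NS-Lie algebra is the bracket induced through $\tau\circ T$ from the subadjacent Lie bracket $[\cdot,\cdot]_\ast$ on $V$, which is formula \eqref{eq:induced1} for the trace $\tau\circ T$, and this is consistent with the $3$-Lie-Rep data obtained across the top edge via \eqref{eq:induced1} and \eqref{eq:induced2}. The trace-map verification of the second paragraph is the only genuine obstacle; the remaining identifications are a direct comparison of the defining formulas.
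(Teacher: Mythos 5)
Your proposal is correct and follows essentially the same route as the paper: both proofs unwind the two composite $3$-NS-Lie structures on $V$ (via $\rho_\tau$, $\Theta_\tau$ and Proposition on induced structures on one side, via Proposition on NS-Lie algebras from twisted operators followed by the trace construction on the other) and observe that they coincide once the trace on $V$ is taken to be $\tau\circ T$. Your explicit verification that $\tau\circ T$ annihilates the subadjacent bracket $[u,v]_\ast=\rho(Tu)v-\rho(Tv)u+\Theta(Tu,Tv)$, namely $(\tau\circ T)([u,v]_\ast)=\tau([Tu,Tv]_{\mathfrak g})=0$, is a hypothesis-check that the paper leaves implicit (it introduces a general $\tau'$ with that property and only afterwards specializes to $\tau'=\tau\circ T$), and it is a worthwhile detail to make explicit.
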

\begin{proof}
Let $\tau:A \rightarrow \mathbb{K}$ be a trace map on $(A,[\cdot,\cdot])$, $(V,\rho)$ be a representation and $\Theta \in \mathfrak{C}_{CE}^{2}(\mathfrak{g},V)$ be a $2$-cocycle. Then $(A,[\cdot,\cdot,\cdot]_{\tau})$ is an induced $3$-Lie algebra, $(V,\rho_{\tau})$ is a representation of the induced $3$-Lie algebra and, by Proposition \ref{eq:induced3}, $\Theta_{\tau}$ is a $2$-cocycle of the induced $3$-Lie algebra.  
Let  $T:V \rightarrow A$ be  a $\Theta_{\tau}$-twisted $\mathcal{O}$-operator on $(A,[\cdot,\cdot,\cdot]_{\tau})$ with respect to  $(V,\rho_{\tau})$. Then, for any $u,v,w\in V$ and according to  Proposition \ref{eq:onV}, the brackets 
\begin{equation*}
    \{u,v,w\}_1=\rho_{\tau}(Tu,Tv)w,\quad \quad \text{and} \quad  \quad [\![u,v,w]\!]_{1}=\Theta_{\tau}(Tu,Tv,Tw),
\end{equation*}
defines a $3$-NS-Lie algebra on $V$. Also, one can construct a $3$-NS-Lie algebra on $V$ by another method. By Proposition \ref{dass}, we  define an NS-Lie algebra on $V$. By Theorem \ref{indns}, let $\tau ':V \rightarrow \mathbb{K}$ be a linear form such that $\tau'([u,v]_*)=0,\; \forall u,v\in V$. Then
\begin{equation*}
    \{u,v,w\}_2=\tau'(u)\{v,w\}-\tau'(v)\{u,w\}=\tau'(u)\rho(Tv)w-\tau'(v)\rho(Tu)w,\;\;\forall u,v,w\in V,
\end{equation*}
and
\begin{equation*}
   [\![u,v,w]\!]_{2}= \circlearrowleft_{u,v,w}\tau'(u)[\![v,w]\!]=\circlearrowleft_{u,v,w}\tau'(u)\Theta(Tv,Tw),\;\;\forall u,v,w\in V,
\end{equation*}
define a $3$-NS-Lie algebra on $V$. By a direct computation, we have
\begin{align*}
    &\{u,v,w\}_2-\{u,v,w\}_1=\tau'(u)\rho(Tv)w-\tau'(v)\rho(Tu)w-\Big((\tau \circ T)(u)\rho(Tv)w-(\tau \circ T)(v)\rho(Tu)w\Big)\\
   &=\Big(\tau'(u)-(\tau \circ T)(u)\Big) \rho(Tv)w-\Big(\tau'(v)-(\tau \circ T)(v)\Big) \rho(Tu)w,\;\;\forall u,v,w\in V,
\end{align*}
and 
\begin{align*}
   &[\![u,v,w]\!]_{2}-[\![u,v,w]\!]_{1}= \circlearrowleft_{u,v,w}\tau'(u)\Theta(Tv,Tw)-\circlearrowleft_{u,v,w}(\tau \circ T)(u)\Theta(Tv,Tw)\\
  & =\circlearrowleft_{u,v,w}\Big(\tau'(u)-(\tau \circ T)(u)\Big)\Theta(Tv,Tw),\;\;\forall u,v,w\in V.
\end{align*}
If we take $\tau'=\tau \circ T$, then we get the same $3$-NS-Lie algebra structure on $V$.\\

In particular, let $(V,\rho)=(A,ad)$, the adjoint representation of $(A,[\cdot,\cdot])$, and consider $ad_{\tau}:\wedge^{2}A \rightarrow gl(A)$ defined by 
\begin{equation*}
    ad_{\tau}(x,y)z=\tau(x)ad(y)z-\tau(y)ad(x)z=\tau(x)[y,z]-\tau(y)[x,z],\;\forall x,y,z\in A.
\end{equation*}
Then, one can follow the previous computations to  obtain a $3$-NS-Lie algebra structure  on $A$ using two different methods.
\end{proof} 

\end{document}